\theoremstyle{plain}
\newtheorem{theorem}{Theorem}[section]
\newtheorem{prop}[theorem]{Proposition}
\newtheorem{lemma}[theorem]{Lemma}
\newtheorem{coro}[theorem]{Corollary}
\theoremstyle{definition}
\newtheorem{example}[theorem]{Example}
\newtheorem{remark}[theorem]{Remark}
\newtheorem{definition}[theorem]{Definition}
\numberwithin{equation}{section}
\newcommand{\dd}{\,\mathrm{d}}
\newcommand{\ts}{\hspace{0.5pt}}
\newcommand{\nts}{\hspace{-0.5pt}}
\DeclareMathOperator{\dens}{\mathrm{dens}}
\DeclareMathOperator{\card}{\mathrm{card}}
\DeclareMathOperator{\supp}{\mathrm{supp}}
\newcommand{\vD}{\varDelta}
\newcommand{\vL}{\varLambda}
\newcommand{\cF}{\mathcal{F}}
\newcommand{\cG}{\mathcal{G}}
\newcommand{\ZZ}{\mathbb{Z}\ts}
\newcommand{\RR}{\mathbb{R}\ts}
\newcommand{\NN}{\mathbb{N}}
\newcommand{\XX}{\mathbb{X}}
\newcommand{\YY}{\mathbb{Y}}
\newcommand{\one}{\mathbbm{1}}
\newcommand{\defeq}{\mathrel{\mathop:}=}
\newcommand{\exend}{\hfill $\Diamond$}
\newcommand{\suc}{\mathrm{succ}}
\newcommand{\cp}{\mathrm{cp}}
\newcommand{\Sep}{\mathrm{Sep\ts}}
\newcommand{\Span}{\mathrm{Span}\ts}
\newcommand{\coin}{\mathrm{coin}}
\newcommand{\disc}{\mathrm{disc}}
\newcommand{\dc}{\mathrm{dc}}
\newcommand{\bd}{\mathrm{bd}}
\newcommand{\oo}{\mathrm{o}}
\newcommand{\eL}{\mathrm{L}}
\newcommand{\OSD}{\mathrm{OSD}}
\newcommand{\er}{R_{\mathrm{ev}}}
\newcommand{\trv}{t_{\mathrm{r}}}
\newcommand{\rc}{r^{}_{\nts\mathrm{c}}}
\begin{document}

\title[Orbit separation dimension for primitive inflation tilings]
  {Orbit separation dimension as complexity measure
    \\[2mm] for primitive inflation tilings}

\author{Michael Baake}

\author{Franz G\"{a}hler}

\address{Fakult\"{a}t f\"{u}r Mathematik,
  Universit\"{a}t Bielefeld,\newline \hspace*{\parindent}Postfach
  100131, 33501 Bielefeld, Germany}
\email{$\{$mbaake,gaehler$\}$@math.uni-bielefeld.de}

\author{Philipp Gohlke}

\address{Lund University, Centre for Mathematical Sciences,\newline
  \hspace*{\parindent}Box 118, 221 00 Lund, Sweden\newline\smallskip
  \hspace*{\parindent}{\rm and}\newline
  \hspace*{\parindent}Institut f\"ur Mathematik, Universit\"at Jena,\newline
  \hspace*{\parindent}Ernst-Abbe-Platz 1--2, 07743 Jena, Germany}
\email{philipp.gohlke@uni-jena.de}

\begin{abstract}
  Orbit separation dimension ($\OSD$), previously introduced as
  amorphic complexity, is a powerful complexity measure for
  topological dynamical systems with pure-point spectrum. Here, we
  develop methods and tools for it that allow a systematic application
  to translation dynamical systems of tiling spaces that are generated
  by primitive inflation rules. These systems share many nice
  properties that permit the explicit computation of the $\OSD$, thus
  providing a rich class of examples with non-trivial $\OSD$.
\end{abstract}

\keywords{Inflation tilings, topological dynamics, invariants, complexity}
\subjclass[2020]{37B52, 52C23}

\maketitle

\medskip

\section{Introduction}\label{sec:intro}

Amorphic complexity was introduced in \cite{FGJ16,FG20,FGJK23} as a
complexity measure for topological dynamical systems $(\XX,G)$, where
$\XX$ is a compact space with some metric $d$, and $G$ a locally
compact, $\sigma$-compact Abelian group acting on $\XX$ by
homeomorphisms. The intention was to provide a finer classification
tool for the abundance of dynamical systems with vanishing
entropy. Following \cite{FGJ16,FG20,FGJK23}, see also
\cite{Henk,Karl}, for any $\delta > 0$ and $x,y \in \XX$, we define
the set
\begin{equation}\label{eq:Deltadelta}
  \vD^{}_{\delta}(x,y) \, = \,
  \{t \in G\, : \, d(x+t,y+t) \geqslant \delta \} \ts ,
\end{equation}
and use $D^{}_{\nts\delta}(x,y)$ to denote the (upper)
\emph{asymptotic density} of the set $\vD^{}_{\delta}(x,y)$, measured
with respect to some (arbitrary, but fixed) F{\o}lner sequence
$\cF = \{ {F}_n \}_{n\in\NN}$,
\begin{equation}\label{eq:Ddelta}
  D^{}_{\nts\delta}(x,y) \, = \, \dens \bigl( \vD^{}_{\delta}(x,y) \bigr)
  \, = \, \limsup_{n \rightarrow \infty}
  \frac{|\vD^{}_{\delta}(x,y) \cap {F}_n|}{|{F}_n|} \ts . 
\end{equation}
Here, $| \cdot |$ denotes the Haar measure of $G$, sometimes also
denoted by $\theta^{}_{G}$. Then, a set $S \subseteq \XX$ is
$(\delta,\nu)$-\emph{separated} if, for all $x,y \in S$ with
$x \ne y$, we have $D^{}_{\nts\delta}(x,y) \geqslant \nu$. Denoting by
$\Sep(\XX,\delta,\nu)$ the maximal cardinality of any
$(\delta,\nu)$-separated subset of $\XX$, the (upper) \emph{amorphic
  complexity} was defined as
\begin{equation}\label{eq:ac}
  \text{ac}(\XX,G) \, \defeq \, \sup_{\delta>0} \,
  \limsup_{\nu {\scriptscriptstyle\searrow} 0}
  \frac{\log \bigl( \Sep(\XX,\delta,\nu)\bigr)}{-\log(\nu)} \ts . 
\end{equation}
This is analogous to a box counting dimension, if
$\Sep ( \XX, \delta, \nu )$ is regarded as the maximum number of
disjoint open balls of diameter $\nu$ that one can squeeze into $\XX$.
Under reasonable assumptions on $G$, it turned out that
$\Sep(\XX,\delta,\nu)$, and hence $\text{ac}(\XX,G)$, can only be
finite if $(\XX,G)$ has vanishing topological entropy \cite{FGJK23}.

In view of the general notions in dynamics, including the discussions
in \cite{Karl,Henk}, we stress the following.  Amorphic (or amorphous)
systems are usually understood to have positive entropy, both in
mathematics and in physics.  In a sense, amorphic systems and systems
for which amorphic complexity is meaningful (finite) are at opposite
ends of the complexity spectrum.  In fact, for the important case
  of a minimal dynamical system $(\XX,G)$, the separation numbers
  $\Sep(\XX,\delta,\nu)$ are finite precisely when $(\XX,G)$ has
  \emph{topological} pure-point spectrum, that is, if it is uniquely
  ergodic and has pure point dynamical spectrum with continuous
  eigenfunctions \cite{FGJK23}.  Hence, systems with pure-point
spectrum are in many respects much more like (aperiodic) crystals, not
like amorphic systems. Calling a complexity measure that is meaningful
only for such systems \emph{amorphic} seems to be misleading. In view
of its definition and the fact that this quantity has the flavour of a
dimension, we therefore propose to call it \emph{orbit separation
  dimension} ($\OSD$) in the rest of this paper, and denote it by
$\OSD(\XX,G)$.

The concept itself, however, is an interesting and powerful tool,
which allows, both theoretically and practically, to analyse
\emph{topological dynamical systems} (TDS).  In the context of
symbolic dynamical systems, it has been studied for constant-length
substitutions in \cite{FG20}, while several classes of model sets were
covered in \cite{FGJK23}. In this paper, we apply it to spaces $\XX$
of self-similar aperiodic tilings of $\RR^d$ that are generated by a
primitive inflation rule. Such inflation tilings constitute a rich
class of interesting examples whose translation dynamical systems are
automatically strictly ergodic, and share other nice properties.  We
expect that some transfer will also be possible to systems of
self-affine tilings, which are generally more complicated and will
certainly need further investigations.

Due to the self-similarity, tools are available \cite{Sol97} to
determine whether the spectrum of the translation action is pure-point
--- a necessary requirement for finite $\OSD$. If that is the case,
these systems are almost $1$-to-$1$ extensions of their underlying
\emph{maximal equicontinuous factor} (MEF), and all their
eigenfunctions can be chosen continuous \cite{Sol07} (for a discussion
of the MEF in our context, see \cite{BK13}). In a similar way, the
self-similarity can also be used to determine the $\OSD$ exactly in
many cases, which provides an invariant under topological conjugation
for distinguishing tiling spaces which are otherwise difficult to tell
apart. The class of primitive inflation tilings is therefore an ideal
play ground to study and apply $\OSD$, where it is both non-trivial
and practically computable.  \vspace*{1mm}

The remainder of this paper is organised as follows. Before we start
with the main topic, Section~\ref{sec:prelim} collects some facts and
properties of mean equicontinuity and primitive inflation tilings.  In
Section~\ref{sec:trans}, we then construct a transversal to the
continuous group action. The reason is that this transversal already
determines the $\OSD$, and can be constructed as the fixed point of a
graph-directed iterated function system (GIFS). The latter then allows
us to compute the $\OSD$ in terms of a discrepancy inflation in
Section~\ref{sec:iter}.

Before we illustrate, in Section~\ref{sec:ex}, the concepts developed
in this paper with an extensive collection of examples, we take a closer
look at product tilings in Section~\ref{sec:prod}, and briefly comment
on the relation between $\OSD$ and the dimension of the window boundary
of cut-and-project sets in Section~\ref{sec:win}. We end the paper with
some concluding remarks in Section~\ref{sec:conc}.

\clearpage

\section{Preliminaries}\label{sec:prelim}

Let us collect some results on mean equicontinuity of a dynamical
system, and its relation to $\OSD$; see \cite{FGL21,FG20,FGJK23} for
details. We do not deal with the most general case here, but constrain
ourselves to $G$ being Abelian, which simplifies things considerably.
Also, in view of our tiling applications, we assume the dynamical
system to be strictly ergodic.

So, we have a compact metric space $\XX$ with metric $d$, and a
locally compact, second countable Abelian group $G$ which acts
continuously on $\XX$ by homeomorphisms, thereby defining a TDS
$(\XX,G)$. We write the action of $G$ additively. A \emph{F{\o}lner
  sequence} $\cF = (F_n)_{n\in\NN}$ is a sequence of non-empty compact
subsets $F_n$ of $G$ such that, for all $g \in G$,
\[
  \lim_{n\to\infty} \frac{|(F_n+g)\triangle F_n|}{|F_n|}
  \, = \, 0 \ts , 
\]
where $\triangle$ refers to the symmetric difference. Note that, for
$G$ Abelian, there is no need to distinguish between left and right
F{\o}lner sequences. Given a F{\o}lner sequence $\cF$, we say that
$(\XX,G)$ is (Besicovitch) $\cF$-\emph{mean equicontinuous} if, for
all $\varepsilon>0$, there exists $\delta_\varepsilon>0$ such that
\begin{equation}\label{eq:DF}
  D^{}_{\nts\cF}(x,y) \, \defeq \, \limsup_{n\rightarrow\infty}
  \frac{1}{|F_n|}\int_{F_n} d(x+t,y+t){\dd}\theta^{}_{G}(t)
  \, < \, \varepsilon
\end{equation}
for all $x,y\in\XX$ with $d(x,y)<\delta_\varepsilon$, where the
integration is with respect to the Haar measure of $G$. Note that
$D^{}_{\nts\cF}$ is not a metric, but a pseudometric on $\XX$.

There is also the notion of (Weyl) \emph{mean equicontinuity} (or just
\emph{mean equicontinuity}), which is equicontinuity with respect to
the pseudometric
$D_{_\mathrm{W}} (x,y) \defeq \sup_{\nts\cF}D^{}_{\nts\cF}(x,y)$,
where the supremum is taken over all F{\o}lner sequences, making
$D_{_\mathrm{W}}$ manifestly independent of any F{\o}lner sequence.
However, it turns out that, in the Abelian case, Weyl mean
equicontinuity and Besicovitch $\cF$-mean equicontinuity for any
F{\o}lner sequence $\cF$ are actually equivalent; compare
\cite[Thm.~1.3]{FGL21}. With these notions, we can recall the
following result from \cite{FGL21,FG20,FGJK23}.

\begin{theorem}\label{thm:meaneq}
  Suppose\/ $(\XX,G)$ is a minimal TDS with\/ $G$ Abelian, and let\/
  $\cF$ be any F{\o}lner sequence.  Then, the following properties are
  equivalent.
  \begin{itemize}
    \item $(\XX,G)$ is Besicovitch\/ $\cF$-mean equicontinuous. 
    \item $(\XX,G)$ is Weyl mean equicontinuous.
    \item $(\XX,G)$ has a unique ergodic measure\/ $\mu$, and\/
      $(\XX,G,\mu)$ has pure-point dynamical spectrum with continuous
      eigenfunctions.
    \item The separation numbers\/ $\mathrm{Sep}\,(\XX,\delta,\nu)$
      are finite. \qed
  \end{itemize}
\end{theorem}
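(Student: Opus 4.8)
The plan is to establish the four properties as a cycle of implications, assembling the pieces that are distributed across \cite{FGL21,FG20,FGJK23}; minimality and the Abelian hypothesis make available exactly the simplifications those references exploit. I would first dispose of the equivalence of the two mean-equicontinuity notions. The point is that, in the Abelian setting, the Besicovitch pseudometric $D^{}_{\nts\cF}$ attached to any single F{\o}lner sequence already agrees with the Weyl pseudometric $D_{\mathrm{W}}=\sup_{\cF}D^{}_{\nts\cF}$; this is precisely \cite[Thm.~1.3]{FGL21}. Consequently, equicontinuity with respect to $D^{}_{\nts\cF}$ does not depend on the choice of $\cF$ and coincides with equicontinuity with respect to $D_{\mathrm{W}}$, which settles the equivalence of the first two bullet points and, incidentally, shows that all remaining conditions are independent of $\cF$.

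Next I would treat the equivalence of mean equicontinuity with the spectral statement. For the forward direction, mean equicontinuity says that the identity map from $(\XX,d)$ to $(\XX,D_{\mathrm{W}})$ is uniformly continuous; passing to the metric quotient yields a continuous factor map $\pi\colon\XX\to\XX_{\mathrm{eq}}$ onto a compact space on which $G$ acts minimally and equicontinuously, hence by rotations of a compact Abelian group --- the maximal equicontinuous factor. Unique ergodicity is inherited from this factor, and the characters of the group pull back under $\pi$ to continuous eigenfunctions; one then checks that these span $L^2(\XX,\mu)$, giving pure-point dynamical spectrum. Conversely, if $(\XX,G,\mu)$ is uniquely ergodic with pure-point spectrum and continuous eigenfunctions, the joint map of a generating family of eigenfunctions realises $\XX_{\mathrm{eq}}$ as a topological factor and exhibits the system as an almost $1$-to-$1$ extension; a direct estimate then bounds $D_{\mathrm{W}}(x,y)$ in terms of the distance of $\pi(x)$ and $\pi(y)$ in $\XX_{\mathrm{eq}}$, which is mean equicontinuity. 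This is the circle of ideas underlying \cite{FG20,FGL21}.

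Finally, I would connect either of these to the finiteness of the separation numbers. One direction is a soft Chebyshev estimate: from $\vD_{\delta}(x,y)=\{t : d(x+t,y+t)\geqslant\delta\}$ one gets $\delta\,|\vD_{\delta}(x,y)\cap F_n|\leqslant\int_{F_n} d(x+t,y+t)\dd\theta_G(t)$, hence $D_{\delta}(x,y)\leqslant D^{}_{\nts\cF}(x,y)/\delta$. Thus, if the system is mean equicontinuous, points whose images in $\XX_{\mathrm{eq}}$ are close have small $D_{\delta}$, so a $(\delta,\nu)$-separated set must project to a set that is uniformly separated in the compact group $\XX_{\mathrm{eq}}$, whose cardinality is bounded by a covering number depending only on $\delta$ and $\nu$; this yields finiteness of $\Sep(\XX,\delta,\nu)$. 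For the reverse implication I would argue by contraposition: if mean equicontinuity fails, there is an $\varepsilon>0$ and points of arbitrarily small $d$-distance but $D_{\mathrm{W}}$-distance at least $\varepsilon$, and unfolding the definition of $D_{\mathrm{W}}$ produces, for suitable $\delta$ and $\nu$, arbitrarily large $(\delta,\nu)$-separated families, so $\Sep(\XX,\delta,\nu)=\infty$.

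The main obstacle is precisely this last, quantitative bridge between the integral pseudometric and the density-of-deviation functional. The Chebyshev direction is routine, but the converse --- extracting genuinely many pairwise density-separated points from a mere failure of mean equicontinuity --- requires care: one must ensure that the separation holds on a set of \emph{uniformly} positive density rather than merely a set of positive measure. This is exactly where strict ergodicity (unique ergodicity together with minimality, giving uniform convergence of the ergodic averages of continuous functions) is used, promoting measure-theoretic largeness to the honest lower bound $\nu$ demanded by the definition of a $(\delta,\nu)$-separated set.
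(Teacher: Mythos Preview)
The paper does not prove Theorem~\ref{thm:meaneq}; it is stated as a result \emph{recalled} from \cite{FGL21,FG20,FGJK23}, with the trailing \qed\ marking it as quoted without argument. There is thus no proof in the paper to compare against --- your proposal is effectively an attempt to reconstruct what the cited references establish, and at that level it is a reasonable roadmap.

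One imprecision is worth flagging. You write that in the Abelian setting the Besicovitch pseudometric $D^{}_{\cF}$ ``already agrees'' with the Weyl pseudometric $D_{\mathrm{W}}$, invoking \cite[Thm.~1.3]{FGL21}. That theorem asserts the equivalence of the two \emph{equicontinuity conditions}, not pointwise equality of the pseudometrics. The pseudometrics do coincide on a minimal mean-equicontinuous system (via pointwise unique ergodicity of the product, \cite[Thm.~1.2]{FGL21}, as the paper also notes later), but using that to prove the first equivalence would be circular. Your conclusion survives; only the justification should read ``the equicontinuity conditions coincide'' rather than ``the pseudometrics coincide''.

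Your identification of the delicate step --- manufacturing a genuinely infinite pairwise $(\delta,\nu)$-separated set from the mere failure of mean equicontinuity --- is accurate. As written, your contraposition only produces a sequence of \emph{pairs} $(x_n,y_n)$ that are close in $d$ yet far in $D_{\mathrm{W}}$; turning this into arbitrarily large sets with \emph{mutual} density separation is the real work, and it is more involved in \cite{FGJK23} than your sketch indicates. You correctly flag this as the main obstacle, so the proposal is honest about where the gap lies.
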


Recall that the $\OSD$ of mean equicontinuous, minimal dynamical
systems does not depend on the choice of the F{\o}lner sequence
\cite[Thm.~1.2]{FGJK23}. Since we are only concerned with such systems
throughout this work, the choice of $\cF$ will be immaterial.

We also note that, for mean equicontinuous dynamical systems $(\XX,G)$,
the factor map to the MEF can be obtained by identifying points with
vanishing distance in the pseudometric $D_{_\mathrm{W}}$. More
precisely, we consider the equivalence relation given by
\begin{equation}\label{eq:equiv}
  x \sim y \quad \Longleftrightarrow \quad
  D_{_\mathrm{W}}(x,y) = 0 \ts ,
\end{equation}
called the \emph{equicontinuous structure relation}.  The equivalence
class corresponding to $x$ is denoted by $[x]$, and $D_{_\mathrm{W}}$
naturally defines a metric on the quotient space $[\XX] = \XX/{\sim}$
via
$D_{_\mathrm{W}}\bigl( [x], [y] \bigr) \defeq
D_{_\mathrm{W}}(x,y)$. It is easily checked that this is
well-defined. For every subset $S \subseteq \XX$, we also use the
notation
\[
   [S] \, = \, \bigl\{ [x] : x \in S \bigr\}.
\]
Since the pseudometric $D_{_\mathrm{W}}$ is invariant under
translations, the same holds for the corresponding equivalence
relation. This shows that the group action
\[
   [x] \, \longmapsto \, [x]+t \defeq [x+t]
\]
is well-defined on the quotient space, thus turning
$\bigl( [\XX],G \bigr)$ into another TDS.  By construction, the
(quotient) map
\[
  \pi \colon \, \XX \xrightarrow{\quad} [\XX],
      \quad x \mapsto [x]
\]
intertwines (or semi-conjugates) the action of $G$ on the two spaces.
Since we assume that $(\XX,G)$ is mean equicontinuous, the action of
$\pi$ is continuous and hence defines a factor map from one TDS onto
another.  In this situation, it is also known that $([\XX],G)$
coincides with the MEF of $(\XX,G)$; see \cite[Prop.~3.13]{FGL21}, as
well as \cite{Aus,ABKL} for background.

As already indicated, our systems of interest are spaces of primitive
inflation tilings and their translation dynamical systems. We will
therefore confine ourselves to $G=\RR^d$ (or to some discrete subgroup
thereof, such as $\ZZ^d$). In the remainder of this section, we give a
brief description of the setup and the basic properties. For further
background, we refer to \cite{TAO}.

A \emph{tiling} is a covering of $\RR^d$ with tiles, which are compact
sets each of which is the closure of its interior, and which are
supposed to overlap at most on their boundaries. We only consider the
case of regular tiles, that is, tiles with a boundary of Lebesgue
measure $0$.  Tiles may also carry an extra label, to distinguish
tiles of different type, but having congruent support (or shape). We
assume that, up to translation by elements of $G$, there are only
\emph{finitely} many tile types, called \emph{prototiles}. We also
require that tiles may be joined in finitely many ways only.  To make
this more precise, if $B_{r} (a)$ is the the closed ball of radius $r$
centred at $a$, we call the collection of all those tiles in a tiling
$x$ which intersect such a ball an \emph{$r$-patch}, written as
$x \sqcap B_{r} (a)$. We require that, up to translation, the number
of different $r$-patches is finite for any radius $r$. This property
is called \emph{finite local complexity} (FLC).

On the space of all tilings constructed from a given set of
prototiles, we have a metric, known as the \emph{tiling metric}
$d(x^{}_1 , x^{}_2 )$, which is defined as
\[
   d (x^{}_1 ,x^{}_2) \, \defeq \,
   \min \bigl\{ \widetilde{d}(x^{}_1 , x^{}_2), 2^{-1/2} \bigr\},
\]
where
\[
  \widetilde{d}(x_1,x_2) \, = \, \inf \{ \varepsilon > 0 :\,
  \exists\, t^{}_1, t^{}_2 \in B_\varepsilon(0) \text{ with } 
     (x^{}_1 - t^{}_1) \sqcap B_{1/\varepsilon}(0) =
      (x^{}_2 - t^{}_2) \sqcap B_{1/\varepsilon}(0) \} \ts .
\]
In words, to be $\varepsilon$-close, the tilings must exactly agree in
a ball of radius $1/\varepsilon$ around the origin, possibly after
some $\varepsilon$-small rigid translations \cite{Sol97}. The cap at
$2^{-1/2}$ is necessary for the triangle relation to hold
\cite{LMS02}. The topology induced by the tiling metric is called the
\emph{local topology}; compare \cite{TAO}. The orbit closure of an FLC
tiling in the local topology is compact. The group $\RR^d$ acts
continuously on it by translation.  We thereby obtain a
\emph{topological tiling dynamical system} $(\XX,\RR^d)$, or TTDS for
short. For the tilings we consider, these dynamical systems will be
strictly ergodic, so that the tiling space can be constructed as the
closure of the $\RR^d$-orbit of any of its elements.

We further assume that our tiling space is invariant under a primitive
inflation rule with a uniform scaling factor.  In this procedure, a
tile, patch of tiles, or an entire tiling, is scaled by a factor
$\lambda>1$ (which necessarily is an algebraic integer; compare
\cite[Thm.~2.4]{TAO} and references given there), and the scaled tiles
are then dissected in a unique way into tiles of the original size.
Starting with a single tile, by iterated inflation, and combined with
translations, one can then generate larger and larger patches, and
then (in the limit) tilings of the entire space. An inflation
procedure is called \emph{primitive} if it eventually produces, from
any starting tile, a patch containing tiles of all types
simultaneously. Primitive inflation rules have the property that all
tilings they can generate are \emph{locally indistinguishable} (LI),
hence members of the same tiling space, with strictly ergodic
translation dynamics.

The case just described, where a scaled tile is replaced by a patch of
tiles having the same support as the scaled tile, is called a
\emph{stone inflation} \cite{TAO}. The condition of matching supports
can actually be relaxed to some extent, as long as tiles in a tiling
are replaced by patches which still cover the whole space without
(interior) overlaps. Then, there exists a stone inflation tiling which
is \emph{mutually locally derivable} (MLD, compare \cite{TAO}), often
even with a combinatorially isomorphic inflation. MLD induces a
topological conjugacy of the dynamical systems, with a conjugating map
that is uniformly local.

If a tiling space consists of aperiodic tilings, each tiling has a
unique predecessor under inflation, a property which is known as
\emph{recognisability} or \emph{unique composition} \cite{Sol98}; this
is the situation of a \emph{local inflation deflation symmetry} (LIDS)
in \cite{TAO}. If, moreover, the spectrum of the translation action
with respect to the unique invariant measure on the tiling space $\XX$
is pure point, all eigenfunctions are known to have continuous
representatives \cite{Sol07}.  The MEF is thus non-trivial, and the
factor map to the MEF is known to be one-to-one almost everywhere
(with respect to the Haar measure on the MEF), and uniformly
bounded-to-one everywhere; compare \cite{BK13}.

In view of Theorem~\ref{thm:meaneq}, we may restrict our attention to
those tiling dynamical systems that have pure-point dynamical
spectrum. That is, for the remainder of this work, we assume that
$(\XX,G)$ is a pure-point tiling dynamical system, generated by a
primitive inflation rule $\varrho$. To avoid trivialities, we further
assume $(\XX,G)$ to be non-periodic\footnote{In the fully periodic
  case, the dynamics is equicontinuous, resulting in vanishing
  $\OSD$. More generally, by a refined argument of this type, no
  periodic direction contributes to the $\OSD$.}  In this case,
$\varrho$ defines a homeomorphism on $\XX$ that induces a map on the
MEF via
\[
  \varrho \colon \,  [\XX] \xrightarrow{\quad} [\XX] \ts ,
  \quad [x] \mapsto [\varrho(x)] \ts .
\]
This is a well-defined homeomorphism on $[\XX]$, compare \cite{ABKL},
which is consistent with set notation in the sense that
\begin{equation}\label{eq:set-vs-eq-class}
  \varrho \bigl( [x] \bigr) \, = \,
  \bigl\{ \varrho(y) : y \in [x] \bigr\}.
\end{equation}
This partly justifies the slight abuse of notation in choosing the
same symbol $\varrho$ for the induced map on the MEF. Now, our
calculations of the $\OSD$ will rely on the observation that it
coincides with the upper box counting dimension of (a subspace of) the
MEF, equipped with an appropriate metric, as we explain next.

\section{The transversal}\label{sec:trans}

As a first step, we define a transversal $\XX^{}_{0} \subset \XX$ to
the TTDS $(\XX,\RR^d)$, with the property that the set of translations
$\{t \in \RR^d\, : \, \exists\, x\in\XX^{}_{0} \text{ with } x+t \in
\XX^{}_{0}\}$ is uniformly discrete (and in general no longer forms a
group). Such a transversal, also called discrete hull, had already
been used by Kellendonk \cite{Kel95}. As we shall see, the use of a
transversal will simplify the computation of the separation numbers
$\Sep(\XX,\delta,\nu)$ significantly.

\begin{remark}\label{rem:Gdiscrete}
  In the case where $G$ is discrete, such as $\ts G=\ZZ^d$, the
  construction of a transversal can actually be omitted, and the
  reader can continue directly after the proof of
  Corollary~\ref{coro:ac0}, with $\XX^{}_{0}=\XX$. Conversely, a
  discrete group $\ZZ^d$ can also be extended to $\RR^d$ via a
  standard suspension (with constant roof function), without changing
  the $\OSD$; see \cite{FGJK23}.  A discrete group $\ts G=\ZZ^d$
  naturally occurs for (symbolic) constant-length substitutions
  ($d=1$) or for constant-shape block substitutions; compare
  \cite{FG20}. Standard suspensions of such systems are naturally
  isomorphic to inflation tilings where all tiles are congruent to
  $d$-dimensional unit cubes, which are then distinguished only by
  their type label. There are further generalisations, as one can see
  from the half-hex inflation, but we do not go into further details.
  \exend
\end{remark}

To construct the transversal, we need to define so-called
\emph{control points} for all tiles, which are special reference
points with some extra properties. The control point of a single tile
$\tau$ is denoted by $\cp (\tau)$.  Now, if two tiles $\tau$, $\tau'$
of the same type occur in the same tiling $x$, they are translates of
each other, $\tau = \tau' + t$, and we request that their control
points then satisfy $\cp(\tau) = \cp(\tau') + t$. Such a translation
$t$ is called a (tile) \emph{return vector}. The $\ZZ$-span $R$ of all
return vectors is called the module of return vectors. Finally, the
module of \emph{eventual return vectors} is defined as
$\er \defeq \bigcup_{n \in \NN_0} \lambda^{-n}R$, so that
$\lambda \er = \er$. Note that $\er = R$ if and only if the algebraic
integer $\lambda$ is a unit. By construction, control points of tiles
of the same type are separated by a vector in $\er$, but for tiles of
different types, this is not automatically the case, and must be
required as an extra property. Finally, we require that the control
point of a tile inflated by $\lambda$ coincides with the control point
of one of its constituent tiles.  The next result shows that control
points with the required properties can always be chosen.  Further
useful requirements will be discussed as we proceed.

\begin{lemma}\label{lemma:cp}
  For the tiles in a primitive inflation tiling with inflation
  factor\/ $\lambda$, it is possible to choose control points with the
  following properties.
\begin{enumerate}\itemsep=2pt
  \item For any two tiles in the tiling, the difference of their
    control points is contained in the module\/ $\er$ of eventual
    return vectors.
  \item The control point\/ $\lambda\,\cp(\tau)$ of an inflated tile\/
    $\lambda\,\tau$ coincides with the control point of one of its
    constituent tiles.
  \item Distinct tiles have distinct control points.
\end{enumerate}
\end{lemma}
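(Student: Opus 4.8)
The plan is to secure property~(2) first, by a fixed-point argument, and then to deduce properties~(1) and~(3) from it. For each prototile type $i\in\{1,\dots,m\}$ I fix a representative $T_i$ and assign a control offset $c_i\in\RR^d$, setting $\cp(T_i+y)\defeq c_i+y$ for every placement $y$. With this ansatz, two tiles of the same type automatically have control-point difference equal to their relative translation, which is a return vector; hence $R$, and therefore $\er$, do not depend on the choice of the $c_i$. To arrange~(2), I designate in each inflated prototile $\varrho(T_i)$ one constituent tile, of type $s(i)$ and relative position $\delta_i$, and demand $\lambda\ts c_i=c_{s(i)}+\delta_i$. This is the fixed-point equation of the affine map $\Phi$ on $(\RR^d)^m$ given by $\Phi(c)_i=\lambda^{-1}(c_{s(i)}+\delta_i)$, whose linear part has operator norm $\lambda^{-1}<1$ in the supremum norm. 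By the Banach fixed-point theorem, $\Phi$ has a unique fixed point, which defines the $c_i$ and hence control points satisfying~(2) by construction. Writing $P$ for the set of all control points, property~(2) then gives $\lambda\ts P\subseteq P$.

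Next I reduce~(1) to a finite statement. Iterating the supertile relation along the inflation hierarchy, the control-point difference of two tiles lying in a common level-$k$ supertile can be written as $\sum_{j=0}^{k-1}\lambda^{j}u_j$, where each $u_j$ belongs to the finite set $U$ of differences $\cp(\tau)-\cp(\tau')$ of two constituents $\tau,\tau'$ of a single inflated prototile. Since $\lambda\ts\er=\er$, it follows that $\sum_j\lambda^j U\subseteq\er$ as soon as $U\subseteq\er$; and by repetitivity every pair of tiles occurs, together with its local environment, inside a common supertile, so this already yields property~(1). Thus everything comes down to showing that the finitely many within-supertile differences lie in $\er$.

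The heart of the matter, and the step I expect to be the main obstacle, is precisely this inclusion $U\subseteq\er$, because $\er$ is built from same-type return vectors, whereas a within-supertile difference $u=\cp(\tau)-\cp(\tau')$ in general links constituents $\tau,\tau'$ of \emph{different} types. Here I would use the characterisation $\er=\{\ts v:\lambda^{n}v\in R\text{ for some }n\in\NN_0\ts\}$, valid because $\lambda R\subseteq R$. Applying~(2) repeatedly gives $\lambda^{n}\cp(\tau)=\cp(\tau_n)$ and $\lambda^{n}\cp(\tau')=\cp(\tau'_n)$ for the designated $n$-fold descendants, so that $\lambda^{n}u=\cp(\tau_n)-\cp(\tau'_n)$. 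The task is then to choose the designated constituents so that, for some $n$, the descendants $\tau_n$ and $\tau'_n$ share the same type; their difference is then a genuine return vector, whence $u\in\lambda^{-n}R\subseteq\er$. Primitivity is what makes this possible: after passing to a suitable power of $\varrho$ one may designate, in every inflated prototile, a constituent of one fixed type, forcing all descendant chains to merge into a single type. Carrying this out while keeping the designations compatible with the single inflation $\varrho$, so that~(2) holds for the factor $\lambda$ itself, is the delicate point of the argument.

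Finally, property~(3) is comparatively soft. Since the prototiles are regular and distinct tiles of a tiling overlap at most on their boundaries, it suffices to keep each control offset $c_i$ in the interior of $T_i$, whereupon distinct tiles carry distinct control points. Should the fixed point produced above land on a tile boundary, the residual freedom in the construction --- the choice of designated constituents together with a global base point --- can be used to move the control points into the interior; alternatively, one notes that a coincidence $\cp(\tau)=\cp(\tau')$ with $\tau\ne\tau'$ would propagate under inflation and force a periodicity incompatible with the standing non-periodicity assumption. Either way,~(3) follows with no further obstacle.
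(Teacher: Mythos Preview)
Your proposal is essentially the paper's proof: both follow Kenyon's construction, obtaining the control offsets as the unique fixed point (you via Banach's contraction principle for $\Phi(c)_i=\lambda^{-1}(c_{s(i)}+\delta_i)$, the paper via the equivalent nested intersection $\cp(\tau)=\bigcap_n\lambda^{-n}\supp(\suc^n(\tau))$), then invoking primitivity to pass to a power $\varrho^n$ in which every supertile contains a fixed common type, so that all successor chains synchronise and~(1) follows, and finally selecting an interior successor for~(3). The point you rightly flag as ``delicate'' --- that control points built from $\varrho^n$ should still satisfy~(2) for the single step $\varrho$ --- is exactly the step the paper's sketch also leaves implicit with the phrase ``these control points can also be used for the original inflation''; it is genuine but inessential, since all later arguments freely replace $\varrho$ by a power. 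One caveat: your alternative route to~(3), arguing that a control-point coincidence would propagate to a periodicity, is not substantiated and should be dropped in favour of the interior-successor argument you give first, which is precisely what the paper does.
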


\begin{proof}[Sketch of proof]
  We essentially follow Kenyon \cite{Ken94}. For each tile $\tau$ of
  type $i$, we choose a fixed tile in its inflated version, and call
  it the successor $\suc(\tau)$ of $\tau$.  We want the control point
  of $\tau$ to coincide with the control point of $\suc(\tau)$ after
  inflation, $\cp (\lambda \tau) = \cp (\suc (\tau))$. Since
  $\cp (\lambda\tau) = \lambda\ts \cp (\tau)$, we certainly have
  $\cp(\tau) \in \lambda^{-1}\supp(\suc(\tau))$. Now, also
  $\suc(\tau)$ has a successor, $\suc(\suc(\tau))=\suc^2(\tau)$, so
  that $\cp(\tau)$ must be contained also in
  $\lambda^{-2}\supp(\suc^2(\tau))$. Iterating this, we get
\[
  \cp(\tau) \in \bigcap_{n\in\NN}\lambda^{-n}\supp
  \bigl(\suc^n(\tau)\bigr) .
\]
The latter set is non-empty and consists of a single point, so that we
have fixed the control points of all tile types. By construction, they
satisfy requirement~(2).

In order to fulfil also the first requirement, we observe that, for a
primitive inflation rule with finitely many prototiles, there exists a
power $n$ of the inflation and a tile type $i$ such that the $n^{th}$
inflation of each tile type contains a tile of type $i$.  Choosing
such a tile as successor produces control points satisfying also
requirement~(1), first for the $n^{th}$ power of the inflation, but
these control points can also be used for the original inflation.

Finally, to avoid control points on a boundary, we may choose a
suitable successor in the \emph{interior} of an inflated tile, with
positive distance to the boundary of the latter. This shows that
requirement (3) can also be satisfied, without interference with the
previous steps.
\end{proof}

\begin{remark}\label{rem:cp}
  In some situations, it is desirable to require further properties of
  the control points. For instance, in one dimension, it would be
  convenient to choose the left endpoints of the tiles as control
  points, which may be in conflict with requirement (1), for instance
  outside the case of height $1$ in constant-length
  substitutions. This problem can be cured by passing to a tiling
  which is MLD to the original one, by using a suitable return word
  encoding. \exend
\end{remark}

\begin{remark}\label{rem:cp2}
  An important further requirement is that the control point set
  should be MLD with the tiling.  Indeed, we should think of control
  points as carrying an extra label, which identifies the type of the
  tile a control point represents. In this sense, the set of control
  points $\cp(x)$ of a tiling $x$ is a subset of $\RR^d \times S$,
  with $S$ the set of tile types. Such a set of labelled control
  points is MLD with the tiling it is derived from.  Often, the extra
  label is not necessary, for instance if the tile type can be
  determined from the local neighbourhood of other control
  points. Thus, to avoid overloaded notation, we usually silently
  drop the type label from a control point when it is not needed.
  \exend
\end{remark}

In the following, we always assume that suitable control points have
been chosen.

\begin{definition}\label{def:trans}
  Suppose that control points according to Lemma~\ref{lemma:cp} have
  been chosen. Denoting by $\cp(x)$ the set of control points of all
  tiles in a tiling $x$, we define the transversal
  $\XX^{}_{0}\subset\XX$ as $\XX^{}_{0} = \{x\in\XX : 0 \in \cp(x)\}$.
\end{definition}

\begin{coro}\label{coro:X0}
  By construction, the transversal\/ $\XX^{}_{0}$ has the following
  properties.
\begin{enumerate}\itemsep=2pt
\item The set\/
  $\{ t \in \RR^d : \exists\, x \in \XX^{}_{0} \text{ with } t \in
  \cp(x) \}$ is uniformly discrete, with a minimal distance\/ $\rho>0$
  between distinct points.
\item The inflation rule maps\/ $\XX^{}_{0}$ into itself.
\end{enumerate}
\end{coro}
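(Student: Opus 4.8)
The plan is to treat the two properties separately, dealing with (2) first, since it follows almost immediately from the inflation-compatibility of the control points, and then with (1), whose only substantial point is to rule out control points of \emph{different} tilings in $\XX^{}_{0}$ coming arbitrarily close.

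For (2), let $x \in \XX^{}_{0}$, so that $0 = \cp(\tau)$ for some tile $\tau$ of $x$. Under the inflation $\varrho$, the tile $\tau$ is first rescaled to $\lambda\tau$, with $\cp(\lambda\tau) = \lambda\ts\cp(\tau)$, and then dissected into tiles of $\varrho(x)$. By Lemma~\ref{lemma:cp}(2), $\lambda\ts\cp(\tau)$ agrees with the control point of one of these constituent tiles, whence $\lambda\ts\cp(\tau) \in \cp(\varrho(x))$. Applied to $\cp(\tau)=0$ this gives $0 \in \cp(\varrho(x))$, i.e.\ $\varrho(x) \in \XX^{}_{0}$; more generally the same step shows $\lambda\ts\cp(x) \subseteq \cp(\varrho(x))$, which I expect to reuse later.

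For (1), the decisive geometric input is uniform discreteness \emph{within a single tiling}. Each tile carries exactly one control point, placed in its interior, and there are only finitely many prototiles up to translation; so I would fix $\rho>0$ small enough that, for every prototile, the closed ball $B_\rho$ about its control point lies in the tile's interior. If $c,c'$ are distinct control points of one tiling $x$, lying in tiles $\tau,\tau'$, then $B_\rho(c)\subseteq\mathrm{int}(\tau)$, and $|c-c'|<\rho$ would force $c'\in\mathrm{int}(\tau)\cap\mathrm{int}(\tau')$, hence $\tau=\tau'$ and $c=c'$, a contradiction; thus $\cp(x)$ is uniformly discrete with minimal distance $\geqslant\rho$ for every $x\in\XX$. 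The next observation is that $t$ belongs to the set of property~(1) precisely when $x+t\in\XX^{}_{0}$ for some $x\in\XX^{}_{0}$, that is, precisely when $\{0,t\}\subseteq\cp(x)$ for a \emph{single} $x\in\XX^{}_{0}$; in particular every nonzero such $t$ already satisfies $|t|\geqslant\rho$, and by Lemma~\ref{lemma:cp}(1) the whole set lies in the module $\er$.

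The main obstacle is to upgrade this to a \emph{uniform} gap between any two distinct elements $t,t'$ of $\mathcal{C}\defeq\{t : \exists\, x\in\XX^{}_{0},\ t\in\cp(x)\}$: since $\er$ is dense in $\RR^d$ whenever $\lambda$ is not a unit, and since $t$ and $t'$ may be witnessed by \emph{different} tilings, the single-tiling argument does not apply to the pair $(t,t')$ — no element of the hull need contain both as control points — so this cross-tiling uniformity is the crux. My plan to resolve it is to invoke the pure-point structure assumed throughout: by the facts recalled in Section~\ref{sec:prelim}, $(\XX,G)$ is an almost everywhere one-to-one, uniformly bounded-to-one extension of its MEF, and the control points form a cut-and-project (inter-)model set with a \emph{compact} window $W$ in an internal space $H$, via a star map $\iota$. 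The condition $x,x+t\in\XX^{}_{0}$ then confines the internal coordinate of $t$ to the relatively compact difference set, so that $\mathcal{C}\subseteq\{v\in\er : \iota(v)\in W-W\}$; because a cut-and-project set with relatively compact window is uniformly discrete — two points with both small physical and small internal separation must coincide, the underlying lattice being discrete in $\RR^d\times H$ — this yields the desired uniform bound, and combining it with the single-tiling estimate pins down the minimal distance $\rho$. As a consistency check one can note that FLC already forces $\mathcal{C}\cap B_R(0)$ to be finite for every $R$, since only finitely many control-point patterns can carry $0$ as a control point; but local finiteness alone (think of $\{\sqrt{n}\}$) does not give the uniform gap, which is exactly why the model-set input is needed.
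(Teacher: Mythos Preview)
Your argument for (2) matches the paper's exactly. For (1), your proof is correct but uses a heavier tool than necessary, and misses the key simplification the paper exploits.

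The paper resolves the ``cross-tiling'' obstacle you identify in a single stroke: since $\cp(x)$ is a Meyer set for pure-point primitive inflation tilings (citing \cite{LS08}), the difference set $\vL \defeq \cp(x) - \cp(x)$ is uniformly discrete, and by \emph{minimality} this set is the \emph{same} for every $x$ in the hull. Now if $t\in\cp(x)$ and $t'\in\cp(y)$ with $x,y\in\XX^{}_{0}$, then $t = t-0 \in \cp(x)-\cp(x)=\vL$ and likewise $t'\in\vL$, so $t-t'\in\vL-\vL$, which is again uniformly discrete because $\vL$ is Meyer. That is the whole argument: no need to invoke the cut-and-project scheme, the star map, or the window $W-W$.

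Your route via the model-set description is also valid --- the paper does state elsewhere (Section~\ref{sec:win}) that the control points are regular model sets --- but it forward-references a structural result that is strictly stronger than the Meyer property and is not set up at this point in the exposition. The trade-off is that your approach makes explicit \emph{why} the Meyer property holds (compact window), while the paper treats it as a black box from \cite{LS08}; on the other hand, the paper's use of minimality to make $\vL$ independent of $x$ is precisely the observation that dissolves the cross-tiling problem without any internal-space machinery, and your write-up would be improved by noting it. Your preliminary single-tiling estimate and the FLC remark are correct but, as you partly recognise, redundant once either global argument is in place.
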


\begin{proof}
  The set of control points $\cp(x)$ of a primitive inflation tiling
  $x\in\XX^{}_{0}$ with pure-point spectrum is necessarily a Meyer set
  \cite{LS08}. As such, its difference set, $\vL = \cp(x) - \cp(x)$,
  which by the minimality of the dynamics is independent of $x$, is
  uniformly discrete, and in fact a Meyer set itself. Hence, the
  difference $p-q$ of any two control points $p\in\cp(x)$ and
  $q\in\cp(y)$, $x,y\in\XX^{}_{0}$, is an element of $\vL - \vL$, so
  that $|p-q|$ is either $0$, or uniformly bounded away from $0$ by a
  positive constant $\rho$.
  
  By property (2) of Lemma~\ref{lemma:cp}, the inflation maps control
  points to control points, which means
  $\lambda\,\cp(x)\subset\cp(x)$.  The tilings in $\XX^{}_{0}$ are
  those with a control point at $0$, so that $\XX^{}_{0}$ is mapped
  into itself.
\end{proof}

We now define the separation number $\Sep(\XX^{}_{0},\delta,\nu)$ as
the maximal cardinality of any $(\delta,\nu)$-separated subset
$S\subseteq\XX^{}_{0}$.  Note that, since $S$ is also a subset of
$\XX$, $(\delta,\nu)$-separation is well-defined also for any subset
of $\XX^{}_{0}$. Clearly, $\Sep(\XX^{}_{0},\delta,\nu)$ is smaller
than $\Sep(\XX,\delta,\nu)$, but it is not much smaller, so that we
can express the $\OSD$ in terms of $\Sep(\XX^{}_{0},\delta,\nu)$.

But before we do so, we note that, for $0<\delta'<\delta$, we have
$D^{}_{\nts\delta'}(x,y)\geqslant D^{}_{\nts\delta}(x,y)$ for any $x,y\in\XX$,
and any $(\delta,\nu)$-separated set is also
$(\delta',\nu)$-separated.  Hence, in the definition of the $\OSD$, we
can restrict the supremum over $\delta$ to any interval $(0,\delta_0]$
with a conveniently chosen $\delta_0$, without affecting the $\OSD$.

Further, since $D^{}_{\nts\delta}$ fails to be a pseudometric on $\XX$,
we need to make use of the following property, which replaces the
triangle relation.

\begin{lemma}\label{lemma:modtrianglerel}
  For\/ $x,y,z\in\XX$ and\/ $\delta>0$, we have\/
  $ D^{}_{2\delta}(x,z) \leqslant D^{}_{\nts\delta}(x,y) +
  D^{}_{\nts\delta}(y,z)$.
\end{lemma}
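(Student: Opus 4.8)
The plan is to deduce the claim from a single set inclusion that encodes the ordinary triangle inequality of the metric $d$, after which the density estimate becomes essentially automatic. The point is that, although $D^{}_{\nts\delta}$ is not a pseudometric, the \emph{sets} $\vD^{}_{\delta}$ do obey a clean inclusion, and densities are monotone and subadditive.

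First I would fix $t\in G$ and apply the triangle inequality of $d$ to the three translated points $x+t$, $y+t$, $z+t$, which gives
\[
  d(x+t,z+t) \,\leqslant\, d(x+t,y+t) + d(y+t,z+t) \ts .
\]
From this I read off that whenever $t\in\vD^{}_{2\delta}(x,z)$, that is, $d(x+t,z+t)\geqslant 2\delta$, the two summands on the right cannot both be strictly smaller than $\delta$; hence at least one of them is $\geqslant\delta$, so $t$ lies in $\vD^{}_{\delta}(x,y)$ or in $\vD^{}_{\delta}(y,z)$. As $t$ was arbitrary, this yields the set inclusion
\[
  \vD^{}_{2\delta}(x,z) \,\subseteq\, \vD^{}_{\delta}(x,y)\cup\vD^{}_{\delta}(y,z) \ts .
\]
Next I would intersect both sides with the F{\o}lner set $F_n$ and pass to Haar measure (to cardinality, in the discrete case), using monotonicity together with the subadditivity of $|\cdot|$ to obtain $|\vD^{}_{2\delta}(x,z)\cap F_n| \leqslant |\vD^{}_{\delta}(x,y)\cap F_n| + |\vD^{}_{\delta}(y,z)\cap F_n|$. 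Dividing by $|F_n|$ and taking the limit superior along the fixed F{\o}lner sequence $\cF$ then gives the claim, once I invoke $\limsup_{n}(a_n+b_n)\leqslant\limsup_{n}a_n+\limsup_{n}b_n$.

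The proof has no serious obstacle; the only point requiring care is that $D^{}_{\nts\delta}$ is an \emph{upper} density, defined through a $\limsup$ rather than a genuine limit. I must therefore use subadditivity of the limit superior, not additivity, and rely on the fact that the same F{\o}lner sequence $\cF$ underlies all three densities — which is guaranteed here, since $\cF$ is fixed throughout. Were one to insist on a common limit, additional hypotheses (such as unique ergodicity, so that the $\limsup$ is an honest limit) would be needed, but for the stated inequality the elementary $\limsup$ estimate is entirely sufficient.
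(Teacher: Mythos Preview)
Your proof is correct and follows essentially the same approach as the paper: establish the set inclusion $\vD^{}_{2\delta}(x,z)\subseteq\vD^{}_{\delta}(x,y)\cup\vD^{}_{\delta}(y,z)$ via the triangle inequality for $d$, then pass to upper densities. You spell out the subadditivity of the $\limsup$ explicitly, which the paper compresses into the phrase ``taking the upper density on both sides'', but there is no substantive difference.
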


\begin{proof}
  By the triangle inequality for the metric $d$, we observe that
  $d(x+t,z+t)\geqslant 2\delta$ requires at least one of the relations
  $d(x+t,y+t)\geqslant\delta$ and $d(y+t,z+t)\geqslant\delta$ to
  hold. That is, if $t\in\vD^{}_{2\delta}(x,z)$, we need
  $t\in\vD^{}_{\delta}(x,y)$ or $t\in\vD^{}_{\delta}(y,z)$. Hence,
\[
    \vD^{}_{2\delta}(x,z) \, \subseteq \, \vD^{}_{\delta}(x,y)
    \, \cup \,  \vD^{}_{\delta}(y,z) \ts .
\]
  Taking the upper density on both sides proves the claim.
\end{proof}

\begin{lemma}\label{lemma:sep}
  For any fixed\/ $\delta$ with $0<\delta<\rho/2$, where $\rho$ is the
  constant from Corollary~$\ref{coro:X0}$, there exists a constant\/
  $C_\delta$ such that, for all\/ $\nu>0$, one has\/
  $\, \Sep(\XX,2\delta,\nu) \leqslant C_\delta \,
  \Sep(\XX^{}_{0},\delta,\nu) $.
\end{lemma}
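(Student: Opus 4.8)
The plan is to bound a $(2\delta,\nu)$-separated subset $S\subseteq\XX$ by a bounded number of $(\delta,\nu)$-separated subsets of the transversal $\XX^{}_{0}$. The idea is that every tiling $x\in\XX$ is close, after a small translation, to a tiling in $\XX^{}_{0}$, since $\XX^{}_{0}$ meets every orbit. Concretely, for each $x\in\XX$ there is a small $t(x)\in\RR^d$ (of norm at most some fixed $r_0$, governed by the control-point geometry) such that $x+t(x)\in\XX^{}_{0}$. I would begin by fixing such a map and showing that translating by the same small vector changes the separation behaviour in a controlled way, using the modified triangle relation of Lemma~\ref{lemma:modtrianglerel} together with translation invariance of $D^{}_{\nts\delta}$.

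First I would observe that the set of admissible ``small translations'' needed to push points of $\XX$ onto $\XX^{}_{0}$ lives in a fixed bounded region $B_{r_0}(0)\subset\RR^d$, and that by uniform discreteness (Corollary~\ref{coro:X0}) the relevant combinatorial data is finite. I would then partition $S$ according to a finite colouring: two points $x,y\in S$ receive the same colour if their retraction translations $t(x),t(y)$ are close enough (within a tolerance $\eta$ to be chosen so that $d(x+t(x),x+t(y))<\delta$ for same-coloured points, using uniform continuity of the action on the compact space $\XX$). Covering $B_{r_0}(0)$ by finitely many $\eta$-balls gives a colouring into $C_\delta$ classes, where $C_\delta$ depends only on $\delta$ (through $\eta$) and on $r_0$. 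The constant $C_\delta$ is thus essentially the covering number of the bounded translation region.

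The main step is then to show that, within a single colour class $S_j$, the map $x\mapsto x+t(x)$ carries $S_j$ into $\XX^{}_{0}$ while preserving $(\delta,\nu)$-separation. For $x,y\in S_j$, I write $x'=x+t(x)$ and $y'=y+t(y)$, both in $\XX^{}_{0}$. Since $t(x)$ and $t(y)$ are within $\eta$, the intermediate point $y+t(x)$ is within $\delta$ of $y'=y+t(y)$ in the metric $d$ uniformly along the orbit, so $D^{}_{\nts\delta}(y',y+t(x))=0$. Combining this with translation invariance, $D^{}_{\nts\delta}(x',y+t(x))=D^{}_{\nts\delta}(x,y)$, and applying Lemma~\ref{lemma:modtrianglerel}, I obtain
\[
  D^{}_{2\delta}(x,y) \,\leqslant\, D^{}_{\nts\delta}(x',y+t(x)) + D^{}_{\nts\delta}(y+t(x),y')
  \,=\, D^{}_{\nts\delta}(x',y').
\]
Hence $(2\delta,\nu)$-separation of $x,y$ forces $(\delta,\nu)$-separation of their images $x',y'\in\XX^{}_{0}$, so each colour class injects into a $(\delta,\nu)$-separated subset of $\XX^{}_{0}$ and has cardinality at most $\Sep(\XX^{}_{0},\delta,\nu)$. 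Summing over the $C_\delta$ classes yields the claim.

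The hard part will be arranging the uniform estimates so that a \emph{single} tolerance $\eta$ (and hence a single finite colouring) works simultaneously for all pairs, and making precise the claim $D^{}_{\nts\delta}(y',y+t(x))=0$ — this relies on the hypothesis $\delta<\rho/2$, which guarantees that two control-point configurations differing by a translation smaller than $\eta$ cannot separate by as much as $\delta$, i.e. the small residual translation never crosses the uniform-discreteness gap $\rho$. I would need to verify that the retraction $x\mapsto t(x)$ can be chosen with the required uniform continuity, or else argue directly with the patch description of the tiling metric to bypass any measurability concern; this is where the geometric input from Corollary~\ref{coro:X0} is essential.
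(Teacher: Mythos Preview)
Your plan is essentially the paper's: write every $x\in\XX$ as $x_0+t$ with $x_0\in\XX^{}_0$ and $t$ in a fixed ball $B_r(0)$ (the covering radius of the Delone set of control points), partition according to $t$ into $C_\delta\sim(r/\delta)^d$ cells, and show that within a cell the retraction to $\XX^{}_0$ carries a $(2\delta,\nu)$-separated set to a $(\delta,\nu)$-separated one via Lemma~\ref{lemma:modtrianglerel}.

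However, your displayed inequality is misassembled. With $x'=x+t(x)$ and $y'=y+t(y)$, the correct chain is
\[
  D^{}_{2\delta}(x,y)\;=\;D^{}_{2\delta}\bigl(x',\,y+t(x)\bigr)
  \;\leqslant\; D^{}_{\nts\delta}(x',y')+D^{}_{\nts\delta}\bigl(y',\,y+t(x)\bigr)
  \;=\;D^{}_{\nts\delta}(x',y')\ts,
\]
using translation invariance first and then Lemma~\ref{lemma:modtrianglerel} with intermediate point $y'$. What you wrote instead has first term $D^{}_{\nts\delta}\bigl(x',y+t(x)\bigr)$, which by your own observation equals $D^{}_{\nts\delta}(x,y)$; together with the vanishing second term this collapses to the vacuous $D^{}_{2\delta}(x,y)\leqslant D^{}_{\nts\delta}(x,y)$, and the final ``$=D^{}_{\nts\delta}(x',y')$'' does not follow.

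You are also overcomplicating the vanishing step and misattributing the role of $\rho$. For the tiling metric one has $d(z,z+s)\leqslant|s|$ for every $z\in\XX$ and small $s$, so $\vD^{}_{\delta}(z,z+s)=\varnothing$ and $D^{}_{\nts\delta}(z,z+s)=0$ whenever $|s|<\delta$; no auxiliary tolerance $\eta$ or uniform-continuity argument is needed, and this has nothing to do with the gap $\rho$. The paper simply takes the cells to have diameter $<\delta$. The hypothesis $\delta<\rho/2$ is not used in this lemma; it is imposed so that $D^{}_{\nts\delta}$ is a genuine pseudometric on $\XX^{}_0$ in the subsequent Lemma~\ref{lemma:pseudonorm}.
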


\begin{proof}
  For translations $t \in \RR^d$ with $|t|<\delta$, and any
  $x \in \XX^{}_{0}$, we have $d(x,x+t)<\delta$. Then,
  $\vD^{}_{\delta}(x,x+t)$ is the empty set, and we have
  $D^{}_{\nts\delta}(x,x+t) = 0$. If we now compare $x^{}_1 + t^{}_1$
  and $x^{}_2 + t^{}_2$ for $x^{}_i \in \XX^{}_{0}$ and
  $t^{}_i \in \RR^d$, Lemma~\ref{lemma:modtrianglerel} implies that
\[
  D^{}_{2\delta}(x^{}_1 + t^{}_1 , x^{}_2 + t^{}_2 )
  \, = \, D^{}_{2\delta}(x^{}_1 , x^{}_2 + t^{}_2 - t^{}_1 )
  \, \leqslant \,  D^{}_{\nts\delta}(x^{}_1 , x^{}_2 ) +
     D^{}_{\nts\delta}( x^{}_2 , x^{}_2 + t^{}_2 - t^{}_1 ) \ts .
\]
If $|\ts t^{}_2 - t^{}_1 | < \delta$, this reduces to
$D^{}_{2\delta}(x^{}_1 + t^{}_1 , x^{}_2 + t^{}_2 ) \leqslant
D^{}_{\nts\delta}(x^{}_1 , x^{}_2 )$. Hence, $x^{}_1 + t^{}_1$ and
$x^{}_2 + t^{}_2$ can only be $(2\delta,\nu)$-separated if either
$D^{}_{\nts\delta}(x^{}_1 , x^{}_2 ) \geqslant \nu$ or
$|t^{}_2 - t^{}_1| \geqslant \delta$.

The set $\cp(x)$ of an FLC tiling $x$ is a Delone set. There is a
radius $r$ such that the set of $r$-balls centred at control points
covers $\RR^d$, and due to the minimality of the dynamics, this radius
$r$ is the same for all $x\in\XX^{}_{0}$. Since the set
$\{x+t : x\in\XX^{}_{0}, t\in B_r (0)\}$ covers $\XX$ completely, we
see that the cardinality of any $(2\delta,\nu)$-separated set in $\XX$
is bounded by $C_\delta \, \Sep(\XX^{}_{0},\delta,\nu)$, where
$C_\delta$ is a constant of the order of $(r/\delta)^d$, independent
of $\nu$.
\end{proof}

The following result is an immediate consequence of
Lemma~\ref{lemma:sep}.

\begin{coro}\label{coro:ac0}
  $\OSD(\XX,\RR^d)$ can be computed from the transversal\/
  $\XX^{}_{0}$ as
\[
  \pushQED{\qed}
  \OSD(\XX,\RR^d) \, = \, \sup_{\delta>0} \,
  \limsup_{\nu {\scriptscriptstyle\searrow} 0}
  \frac{\log \bigl( \Sep(\XX^{}_{0},\delta,\nu)\bigr)}
  {-\log(\nu)} \ts . \qedhere \popQED
\]
\end{coro}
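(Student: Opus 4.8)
The plan is to derive Corollary~\ref{coro:ac0} as a two-sided sandwich, combining the trivial inequality $\OSD(\XX_0^{},\RR^d)\leqslant\OSD(\XX,\RR^d)$ (since any $(\delta,\nu)$-separated subset of $\XX_0^{}$ is also one of $\XX$) with the reverse estimate furnished by Lemma~\ref{lemma:sep}. First I would observe that the right-hand side of the claimed formula is exactly what one obtains by replacing $\Sep(\XX,\delta,\nu)$ with $\Sep(\XX_0^{},\delta,\nu)$ in the definition \eqref{eq:ac} of $\OSD$; so the task is to show that these two suprema-of-limsups agree.

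For the easy direction, since $\XX_0^{}\subseteq\XX$, every $(\delta,\nu)$-separated set in $\XX_0^{}$ is a $(\delta,\nu)$-separated set in $\XX$, giving $\Sep(\XX_0^{},\delta,\nu)\leqslant\Sep(\XX,\delta,\nu)$ for all $\delta,\nu$, and hence the $\OSD$ computed from $\XX_0^{}$ is at most $\OSD(\XX,\RR^d)$. For the hard direction, I would feed the bound of Lemma~\ref{lemma:sep} into the dimension formula. Fix $\delta$ with $0<\delta<\rho/2$ and let $C_\delta$ be the corresponding constant. Taking logarithms in $\Sep(\XX,2\delta,\nu)\leqslant C_\delta\,\Sep(\XX_0^{},\delta,\nu)$ yields
\[
  \frac{\log\Sep(\XX,2\delta,\nu)}{-\log\nu}
  \,\leqslant\,
  \frac{\log C_\delta}{-\log\nu}
  +\frac{\log\Sep(\XX_0^{},\delta,\nu)}{-\log\nu}\ts ,
\]
and since $C_\delta$ is independent of $\nu$, the first term vanishes as $\nu\searrow 0$. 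Passing to the $\limsup$ in $\nu$ therefore bounds the $2\delta$-level quantity for $\XX$ by the $\delta$-level quantity for $\XX_0^{}$, which in turn is dominated by the supremum over $\delta$ of the latter.

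The one genuinely delicate point is the handling of the supremum over $\delta$ on the $\XX$-side: the estimate only controls the scale $2\delta$ for $\XX$ by the scale $\delta$ for $\XX_0^{}$, and only for small $\delta$. Here I would invoke the monotonicity remark made just before Lemma~\ref{lemma:modtrianglerel}, namely that $D_{\nts\delta'}^{}\geqslant D_{\nts\delta}^{}$ for $\delta'<\delta$, which lets us restrict the defining supremum over $\delta$ to any interval $(0,\delta_0]$ without changing $\OSD(\XX,\RR^d)$. Choosing $\delta_0<\rho$ and writing each admissible scale as $2\delta$ with $0<\delta<\rho/2$, the above chain of inequalities shows $\OSD(\XX,\RR^d)\leqslant\sup_{\delta>0}\limsup_{\nu\searrow 0}\log\Sep(\XX_0^{},\delta,\nu)/(-\log\nu)$. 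Combining the two directions gives equality, which is the assertion. No further calculation is required, as the result is labelled an immediate consequence of Lemma~\ref{lemma:sep}; the only real content is the bookkeeping that turns the $2\delta$-versus-$\delta$ shift into an identity of suprema via the scale-restriction freedom.
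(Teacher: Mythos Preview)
Your proposal is correct and matches the paper's intended argument: the paper gives no separate proof but labels the corollary as an immediate consequence of Lemma~\ref{lemma:sep}, and the details you supply (the trivial inclusion for one direction, Lemma~\ref{lemma:sep} plus the scale-restriction remark preceding Lemma~\ref{lemma:modtrianglerel} for the other) are exactly the natural way to unpack that claim.
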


This shows that we can work entirely with $\XX^{}_{0}$. It is
therefore worth noting that, on $\XX^{}_{0}$ and for $\delta$ small
enough, $D^{}_{\nts\delta}$ is a pseudometric, which was not the case
for $\XX$; compare Lemma~\ref{lemma:modtrianglerel}. This is analogous
to the discussion of symbolic dynamical systems in
\cite[Lemma~6.2]{FG20}.

\begin{lemma}\label{lemma:pseudonorm}
  For\/ $\delta< \rho/2$, where\/ $\rho$ is the separation constant
  from Corollary~$\ref{coro:X0}$, $D^{}_{\nts\delta}$ is a pseudometric
  on\/ $\XX^{}_{0}$.
\end{lemma}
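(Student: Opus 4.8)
The plan is to verify the three defining properties of a pseudometric for $D^{}_{\nts\delta}$ on $\XX^{}_{0}$: non-negativity with vanishing diagonal, symmetry, and the triangle inequality. The first two are immediate and require no smallness of $\delta$. Since $d(x+t,x+t)=0<\delta$ for every $t$, the set $\vD^{}_{\delta}(x,x)$ is empty, so $D^{}_{\nts\delta}(x,x)=0$; and the symmetry of the tiling metric gives $\vD^{}_{\delta}(x,y)=\vD^{}_{\delta}(y,x)$, hence $D^{}_{\nts\delta}(x,y)=D^{}_{\nts\delta}(y,x)$. All the work therefore goes into the triangle inequality, where the task is precisely to remove the factor $2$ appearing in the modified relation of Lemma~\ref{lemma:modtrianglerel}. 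Concretely, I would prove that for $x,y,z\in\XX^{}_{0}$ and $0<\delta<\rho/2$ one has the inclusion
\[
  \vD^{}_{\delta}(x,z) \,\subseteq\, \vD^{}_{\delta}(x,y)\cup\vD^{}_{\delta}(y,z)\ts ,
\]
since taking upper densities then yields $D^{}_{\nts\delta}(x,z)\leqslant D^{}_{\nts\delta}(x,y)+D^{}_{\nts\delta}(y,z)$, as in the proof of Lemma~\ref{lemma:modtrianglerel}.

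The heart of the argument, and the main obstacle, is to show that on the transversal the near-coincidence relation $d(x+t,y+t)<\delta$ becomes \emph{transitive} once $\delta<\rho/2$, which is what the unconstrained space $\XX$ lacks. I would argue the contrapositive of the inclusion: assuming $d(x+t,y+t)<\delta$ and $d(y+t,z+t)<\delta$, unfold the definition of $\widetilde{d}$ to obtain, for each pair, small shifts $t_1,t_2\in B_{\delta}(0)$ under which the two tilings agree on a ball $B_{1/\delta}(0)$. The decisive point is that these shifts must in fact coincide: by the MLD property of the control point set (Remark~\ref{rem:cp2}), agreement of the patches forces agreement of the control points inside the ball, and an $r$-ball around the origin always contains such a control point (FLC/Delone, as used in Lemma~\ref{lemma:sep}). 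Matching one such control point of $x+t-t_1$ with its counterpart in $y+t-t_2$ shows that $t_1-t_2$ equals a difference of control points; but $|t_1-t_2|<2\delta<\rho$, so by the uniform discreteness of $\vL-\vL$ from Corollary~\ref{coro:X0}(1) this difference must vanish, giving $t_1=t_2$. Thus near-coincidence at level $\delta<\rho/2$ is genuine coincidence, with \emph{no relative shift}, on a central ball, and exact coincidence on a ball is manifestly transitive. Combining the two pairs then forces $x+t$ and $z+t$ to coincide on a central ball as well, which is the desired inclusion.

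The delicate, but routine, bookkeeping left is to track the radius of the central ball under the allowed $\delta$-small common shift $t_1=t_2$: agreement on $B_{1/\delta}(0)$ after this shift yields exact agreement on a slightly smaller concentric ball, and one must confirm that the resulting coincidence radius still controls $d(x+t,z+t)$ below the threshold $\delta$. I expect this to be the only genuinely technical step; everything else is dictated by the discreteness supplied by Corollary~\ref{coro:X0}. It is exactly this control-point rigidity, unavailable on $\XX$ where arbitrary continuous translations blur any threshold, that upgrades Lemma~\ref{lemma:modtrianglerel} from the factor-$2$ relation to a true triangle inequality and thereby makes $D^{}_{\nts\delta}$ a pseudometric on $\XX^{}_{0}$.
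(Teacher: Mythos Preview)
Your proposal is correct and follows essentially the same route as the paper: both establish the inclusion $\vD_\delta(x,z)\subseteq\vD_\delta(x,y)\cup\vD_\delta(y,z)$ by using the uniform discreteness of control-point differences (Corollary~\ref{coro:X0}) to force the two small shifts in the definition of $\widetilde d$ to coincide, so that near-coincidence becomes exact agreement on a central ball and is therefore transitive. The paper simply asserts exact agreement on $B_{1/\delta}(t)$ and glosses over the residual common shift that you explicitly flag as routine bookkeeping, but otherwise the arguments match.
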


\begin{proof}
  Let $x,y,z\in\XX^{}_{0}$. We show that, for distances below
  $\delta$, the tiling metric $d$ behaves like an ultra-metric on the
  simultaneous orbit of $x,y,z$. More precisely, let $t\in\RR^d$ and
  assume $d(x-t,y-t)<\delta$. Then, $x$ and $y$ agree on a ball of
  radius $1/\delta$ around $t$, after an at most $\delta$-small rigid
  translation of one (or both) of the tilings. However, since $x$ and
  $y$ share a control point at the origin, all other control points
  either coincide, or are at least a distance $\rho$ apart. Since
  $\delta< \rho/2$, the tilings $x$ and $y$ must in fact agree exactly
  on the ball $B_{1/\delta}(t)$.

  By the same reasoning, $d(y-t,z-t)<\delta$ implies that $y$ and $z$
  agree on $B_{1/\delta}(t)$. Hence, the same holds for the pair
  $(x,z)$, implying that $d(x-t,z-t)<\delta$. Conversely,
  $d(x-t,z-t)\geqslant\delta$ now requires that one of
  $d(y-t,y-t)\geqslant\delta$ or $d(y-t,z-t)\geqslant\delta$ holds,
  which means that
\[ 
    \vD^{}_{\delta}(x,z) \, \subseteq \, \vD^{}_{\delta}(x,y) \cup
    \vD^{}_{\delta}(y,z) \ts .
\]
Taking the upper density on both sides gives
$D^{}_{\nts\delta}(x,z)\leqslant
D^{}_{\nts\delta}(x,y)+D^{}_{\nts\delta}(y,z)$.
\end{proof}

In the same way, two tilings $x,y\in\XX^{}_{0}$ can only be close in
the tiling metric, $d(x,y)< \rho/2$, if they exactly agree on some
ball around the origin. This means that two tilings that are close
must have some coincidences, that is, tiles which are shared by both.
We introduce the following terminology.

\begin{definition}\label{def:disc}
  For any two tilings $x,y\in \XX^{}_{0}$, we define two subsets of
  $\RR^d$ that, apart from a null set, are complements of each other:
\begin{enumerate}\itemsep=2pt
\item The \emph{coincidence set} $\coin(x,y)$ of $x$ and $y$, which is
  the union of the supports of all tiles that do occur in both $x$ and
  $y$.
\item The \emph{discrepancy set} $\disc(x,y)$ of $x$ and $y$, which is
  the union of the supports of all tiles of $x$ or $y$ that do not
  occur in both tilings.
  \end{enumerate}
\end{definition}

We have $\coin (x,y)=\overline{\RR^d\setminus\disc(x,y)}$ and
$\disc(x,y)=\overline{\RR^d\setminus\coin(x,y)}$. The two sets only
intersect on their boundaries, which are null sets in our
setting. With these concepts, we can now introduce another
pseudometric on $\XX^{}_{0}$ as follows.

\begin{definition}\label{def:D}
  For $x,y\in\XX^{}_{0}$ and a fixed F{\o}lner sequence
  $\cF = \{ F_n \}^{}_{n\in\NN}$, we set
\[
    D(x,y) \, \defeq \, \limsup_{n\rightarrow\infty}
    \frac{|\disc(x,y)\cap F_n|}{| F_n|} \ts .
\]
\end{definition}

Note that $D$ is indeed a pseudometric. It is obviously symmetric and
positive semi-definite. Further, if a tile $\tau$ is not in
$x \cap z$, it cannot be both in $x \cap y$ and in $y \cap z$. Hence,
we get $\disc(x,z) \subseteq \disc(x,y) \cup \disc(y,z)$, and the
triangle inequality follows.

\begin{remark}
  As we already mentioned in the discussion following
  Theorem~\ref{thm:meaneq}, the choice of the F{\o}lner sequence $\cF$
  is immaterial for the value of the $\OSD$. It is worth pointing out
  that even the pseudometric $D$ defined above does not depend on
  $\cF$ for the systems at hand. Indeed, it was shown in
  \cite[Thm.~1.2]{FGL21} that, for a mean equicontinuous, minimal
  system $(\XX,G)$, the product system
  $(\XX \ts {\ts\times\ts} \XX,G)$ is pointwise uniquely ergodic. Let
  $I(x,y)$ be the indicator function that takes the value $1$ if the
  pair $(x,y)$ has a discrepancy at the origin and $0$
  otherwise. Then, we can write $D(x,y)$ as a Birkhoff average of $I$
  along the sequence $\cF$. Due to unique ergodicity of the orbit
  closure of $(x,y)$, this average is independent of $\cF$. By the
  same reasoning, we see that the $\limsup$ in the definition of $D$
  can actually be replaced by a limit. \exend
\end{remark}

The pseudometric $D(x,y)$ is closely related to the pseudometric
$D^{}_{\nts\delta} (x,y)$ defined in \eqref{eq:Ddelta}.

\begin{lemma}\label{lemma:equivD}
  With respect to a fixed F{\o}lner sequence\/
  $\cF = \{ F_n \}^{}_{n\in\NN}$, the pseudometrics\/ $D(x,y)$ and\/
  $D^{}_{\nts\delta} (x,y)$, with\/ $\delta \in (0, \rho/2)$, are all
  Lipschitz equivalent.
\end{lemma}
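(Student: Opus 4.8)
The plan is to establish the two-sided estimate
\[
  D(x,y) \,\leqslant\, D^{}_{\nts\delta}(x,y)
  \,\leqslant\, C^{}_{\delta}\, D(x,y)
\]
for all $x,y\in\XX^{}_{0}$, with a constant $C^{}_{\delta}$ that depends only on $\delta$ and the (finite) tile geometry. Since this holds for each $\delta\in(0,\rho/2)$, all the pseudometrics $D^{}_{\nts\delta}$ are then mutually Lipschitz equivalent and equivalent to $D$. The bridge between $D$ and $D^{}_{\nts\delta}$ is a geometric dictionary that identifies $\vD^{}_{\delta}(x,y)$ with a bounded neighbourhood of the discrepancy set $\disc(x,y)$. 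As preparation, I would record that, by FLC and finiteness of the prototile set, every tile support has volume in a fixed interval $[v^{}_{\min},v^{}_{\max}]$ and diameter at most $\Delta_{\max}$, and that the discrepancy set is translation covariant, $\disc(x+t,y+t)=\disc(x,y)+t$.

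The core step, and the main obstacle, is the dictionary
\[
  -\disc(x,y) \,\subseteq\, \vD^{}_{\delta}(x,y)
  \,\subseteq\, -\bigl(\disc(x,y)\oplus B^{}_{r}(0)\bigr),
\]
valid up to a boundary null set, with $r=r(\delta)$ a finite radius of order $1/\delta$. The right inclusion is the easy half: if $d(x+t,y+t)\geqslant\delta$, then the patches of $x+t$ and $y+t$ cannot agree on a ball of radius $\approx 1/\delta$ about the origin, so some discrepancy tile of $(x+t,y+t)$ meets that ball, which by covariance places a point of $\disc(x,y)$ in $B^{}_{r}(-t)$. The left inclusion is where the transversal enters, and is the delicate point: I must exclude that a genuine tile discrepancy near the origin is reconciled by a small rigid translation. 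This is exactly the rigidity behind Lemma~\ref{lemma:pseudonorm}. If $d(x+t,y+t)<\delta$, then $x+t$ and $y+t$ agree on $B^{}_{1/\delta}(0)$ after a rigid translation of length $<2\delta$; matching their control point sets, which are MLD with the tilings (see Remark~\ref{rem:cp2}), forces this translation to lie in $\cp(x)-\cp(y)$, a set that by Corollary~\ref{coro:X0} and the Meyer property has no nonzero element of length below $\rho>2\delta$. Hence the translation is $0$ and the two tilings agree exactly on a ball containing the origin. Contrapositively, a discrepancy tile covering the origin forces $d(x+t,y+t)\geqslant\delta$; taking $t=-s$ for $s$ interior to a discrepancy tile then gives $-\disc(x,y)\subseteq\vD^{}_{\delta}(x,y)$.

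It remains to take upper densities in the two inclusions. Because these densities do not depend on the Følner sequence for the systems at hand (Remark after Definition~\ref{def:D} and \cite[Thm.~1.2]{FGL21}), I may compute with a symmetric Følner sequence, so that the reflection is harmless. The left inclusion then gives $D(x,y)\leqslant D^{}_{\nts\delta}(x,y)$ at once. For the right inclusion I would cover $\disc(x,y)\oplus B^{}_{r}(0)$ by the enlarged supports $\tau\oplus B^{}_{r}(0)$ of the discrepancy tiles $\tau$; each has volume at most a constant $c^{}_{\delta}$, while the number of discrepancy tiles meeting $F^{}_{n}\oplus B^{}_{r+\Delta_{\max}}(0)$ is at most $v_{\min}^{-1}\,\bigl|\disc(x,y)\cap(F^{}_{n}\oplus B^{}_{r+\Delta_{\max}}(0))\bigr|$, since the tile supports are essentially disjoint. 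Dividing by $|F^{}_{n}|$, using the Følner property $|F^{}_{n}\oplus B^{}_{R}(0)|/|F^{}_{n}|\to 1$, and passing to the limit superior yields $D^{}_{\nts\delta}(x,y)\leqslant (c^{}_{\delta}/v^{}_{\min})\,D(x,y)$. Thus every $D^{}_{\nts\delta}$ with $\delta\in(0,\rho/2)$ is sandwiched between $D$ and a fixed multiple of $D$, so all of them are Lipschitz equivalent. The only genuinely subtle ingredient is the left inclusion of the dictionary, that is, the control-point rigidity which prevents a small translation from hiding a discrepancy; the remaining estimates are bounded geometry together with the Følner property.
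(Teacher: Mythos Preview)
Your proof is correct and follows essentially the same route as the paper: both establish the inclusion $\disc(x,y)\subseteq \vD^{}_{\delta}(x,y)\subseteq \disc(x,y)+B_{1/\delta}(0)$ (up to reflection and a boundary null set) via the control-point rigidity of Lemma~\ref{lemma:pseudonorm}, and then pass to densities using the bounded tile geometry. You are in fact slightly more careful than the paper in tracking the reflection $t\mapsto -t$ and in invoking the F{\o}lner invariance to make it harmless; the paper suppresses this sign and treats the volume comparison more tersely, but the argument is the same.
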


\begin{proof}
  Recall that two tilings $x,y\in\XX^{}_{0}$ can be close in the
  tiling metric only if their coincidence set is non-trivial. As a
  consequence, the same also holds if they are close in any of the
  pseudometrics $D$ and $D^{}_{\nts\delta}$. The set
  $\vD^{}_{\delta}(x,y)$ then is the discrepancy set $\disc(x,y)$,
  thickened by an extra margin of thickness $1/\delta$,
\[
    \disc(x,y) \, \subseteq \, \vD^{}_{\delta}(x,y)
    \, \subseteq \, \disc(x,y) + B_{1/\delta}(0) \ts ,
\]
  where the $+$ denotes the Minkowski sum of sets.  Hence, we have
  $D(x,y)\leqslant D^{}_{\nts\delta}(x,y)$ for any $\delta > 0$.  On the
  other hand, since the tiles are compact, there exists a constant
  $C_\delta$ such that
  $|\supp(\tau)+B_{1/\delta}(0)| \leqslant C_\delta\, |\supp(\tau)|$
  holds for any tile $\tau$, so that we also have
  $D^{}_{\nts\delta}(x,y) \leqslant C_\delta\, D(x,y)$.  The pseudometrics
  are thus all Lipschitz equivalent.
\end{proof}

\begin{lemma}\label{lemma:equivW}
  For any F{\o}lner sequence\/ $\cF = \{ {F}_n \}_{n\in\NN}$ and any\/
  $\delta \in (0, \rho/2)$, the pseudometrics\/ $D^{}_{\nts\cF}(x,y)$
  and\/ $D_{_\mathrm{W}}(x,y)$ induce the same topology on\/
  $\XX^{}_{0}$ as\/ $D^{}_{\nts\delta}(x,y)$.
\end{lemma}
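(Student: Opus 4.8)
The plan is to prove that $D^{}_{\nts\delta}$, $D^{}_{\nts\cF}$ and $D_{_\mathrm{W}}$ all have the same convergent sequences on $\XX^{}_{0}$; since each of the three is a pseudometric, and pseudometric topologies are first countable, this is equivalent to their inducing the same topology. Throughout, $D^{}_{\nts\cF}$ and $D^{}_{\nts\delta}$ are taken with respect to the F{\o}lner sequence $\cF$ fixed in the statement, and $D_{_\mathrm{W}}=\sup_{\cF}D^{}_{\nts\cF}$. I will sandwich $D^{}_{\nts\cF}$ and $D_{_\mathrm{W}}$ between expressions in $D^{}_{\nts\delta}$ (equivalently $D$, by Lemma~\ref{lemma:equivD}) and then close a loop of implications.

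For the lower estimate, the pointwise bound $d(x+t,y+t)\geqslant\delta\,\one_{\vD^{}_{\delta}(x,y)}(t)$, integrated over $F_n$ and passed to the limes superior, gives $\delta\,D^{}_{\nts\delta}(x,y)\leqslant D^{}_{\nts\cF}(x,y)\leqslant D_{_\mathrm{W}}(x,y)$; in particular $D^{}_{\nts\cF}(x,y_n)\to 0$ forces $D^{}_{\nts\delta}(x,y_n)\to 0$. For the upper estimate I would use the distribution-function (layer-cake) identity $\frac{1}{|F_n|}\int_{F_n}d(x+t,y+t)\dd t=\int_0^{2^{-1/2}}g_n(s)\dd s$, with $g_n(s)=|F_n|^{-1}|\{t\in F_n:d(x+t,y+t)>s\}|\in[0,1]$ and upper limit equal to the cap of the tiling metric. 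Since $g_n\leqslant 1$, reverse Fatou together with $\limsup_n g_n(s)\leqslant D^{}_{\nts s}(x,y)$ gives $D^{}_{\nts\cF}(x,y)\leqslant\int_0^{2^{-1/2}}D^{}_{\nts s}(x,y)\dd s$. Splitting at an auxiliary $s_0\in(0,\rho/2)$, using $D^{}_{\nts s}\leqslant 1$ on $[0,s_0]$ and the monotonicity $D^{}_{\nts s}\leqslant D^{}_{\nts s_0}$ on $[s_0,2^{-1/2}]$, this collapses to $D^{}_{\nts\cF}(x,y)\leqslant s_0+2^{-1/2}D^{}_{\nts s_0}(x,y)$.

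To upgrade this to a bound on $D_{_\mathrm{W}}$ I need a right-hand side free of $\cF$. Here I would invoke that the discrepancy density $D$ is independent of the F{\o}lner sequence (as recorded after Definition~\ref{def:D}, via pointwise unique ergodicity of the product system) together with the inequality $D^{}_{\nts s_0}\leqslant C_{s_0}\,D$ from Lemma~\ref{lemma:equivD}, whose constant $C_{s_0}$ is purely geometric and hence also $\cF$-independent. This turns the previous estimate into $D^{}_{\nts\cF}(x,y)\leqslant s_0+2^{-1/2}C_{s_0}\,D(x,y)$ uniformly over $\cF$, so that $D_{_\mathrm{W}}(x,y)\leqslant s_0+2^{-1/2}C_{s_0}\,D(x,y)$. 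If now $D^{}_{\nts\delta}(x,y_n)\to 0$, then $D(x,y_n)\to 0$ by Lemma~\ref{lemma:equivD}, whence $\limsup_n D_{_\mathrm{W}}(x,y_n)\leqslant s_0$ for every $s_0\in(0,\rho/2)$; letting $s_0\searrow 0$ yields $D_{_\mathrm{W}}(x,y_n)\to 0$. Together with $D_{_\mathrm{W}}\geqslant D^{}_{\nts\cF}\geqslant\delta\,D^{}_{\nts\delta}$ this closes the chain $D^{}_{\nts\delta}\to 0\Rightarrow D_{_\mathrm{W}}\to 0\Rightarrow D^{}_{\nts\cF}\to 0\Rightarrow D^{}_{\nts\delta}\to 0$, which is the assertion.

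I expect the main obstacle to be the small-$s$ part of the layer-cake integral: there $D^{}_{\nts s}$ cannot be controlled by $D$, because the thickening radius $1/s$ of Lemma~\ref{lemma:equivD} diverges and the comparison constant $C_s$ blows up as $s\searrow 0$. The remedy is precisely the free split parameter $s_0$ combined with the trivial bound $D^{}_{\nts s}\leqslant 1$, which confines the uncontrolled region to a vanishing additive error $s_0$. A second, more structural point is the uniformity over all F{\o}lner sequences demanded by $D_{_\mathrm{W}}$; this is where the $\cF$-independence of $D$ and of the geometric constant $C_{s_0}$ must be used, rather than the merely $\cF$-fixed Lipschitz equivalence of Lemma~\ref{lemma:equivD}.
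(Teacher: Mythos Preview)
Your argument is correct, and for the comparison $D^{}_{\nts\delta}\leftrightarrow D^{}_{\nts\cF}$ it is essentially the paper's proof in more elaborate dress: the paper bypasses the layer-cake and reverse Fatou by bounding the integrand directly as $d(x+t,y+t)\leqslant\varepsilon\,\one_{\vD_\varepsilon^{\ts c}}(t)+\one_{\vD_\varepsilon}(t)$, which after averaging and taking $\limsup$ gives $D^{}_{\nts\cF}(x,y)\leqslant D_\varepsilon(x,y)+\varepsilon\bigl(1-D_\varepsilon(x,y)\bigr)\leqslant 2\varepsilon$ once $D_\varepsilon(x,y)<\varepsilon$ --- your bound $D^{}_{\nts\cF}\leqslant s_0+2^{-1/2}D^{}_{\nts s_0}$ is the same inequality. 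The genuine difference is in how $D_{_\mathrm{W}}$ is handled. The paper simply invokes \cite[Thm.~1.3]{FGL21} to say that $D_{_\mathrm{W}}$ and $D^{}_{\nts\cF}$ induce the same topology and is done; you instead upgrade the $D^{}_{\nts\cF}$-bound to a $\cF$-uniform one by routing through the $\cF$-independent quantity $D$ (from the remark after Definition~\ref{def:D}) and the geometric constant $C_{s_0}$ of Lemma~\ref{lemma:equivD}. Your route is more self-contained and makes explicit why Weyl and Besicovitch averages agree here, at the cost of re-deriving a special case of the cited result; the paper's route is shorter but black-boxes that step.
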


\begin{proof}
  Note first that $D_{_\mathrm{W}}$ and $D^{}_{\nts\cF}$ induce the
  same topology \cite{FGL21}; see also our discussion above.  As
  $D_{_\mathrm{W}}$ is independent of the choice of $\cF$, the induced
  topology on $\XX^{}_{0}$ does not depend on $\cF$ either.  Since
  $d(x,y) \geqslant \delta \, \one_{\vD^{}_{\delta}(x,y)}(0)$, one
  easily gets  $\delta \, D^{}_{\nts\delta}(x,y) \leqslant
  D^{}_{\nts\cF}(x,y)$. Conversely, suppose that
  $D^{}_{\nts\delta}(x_n,x) \xrightarrow{n\rightarrow\infty} 0$.  Since
  the pseudometrics $D_\varepsilon$ are all equivalent, for any
  $\varepsilon>0$ there exists some $n_\varepsilon\in\NN$ such that
  $D_\varepsilon(x_n,x) < \varepsilon$ for $n>n_\varepsilon$.  Then,
  we obtain 
\[
  D^{}_{\nts\cF}(x_n,x) \, \leqslant \,
  D_\varepsilon(x_n,x) + \varepsilon \bigl(1 - D_\varepsilon(x_n,x)\bigr)
  \, \leqslant \, 2 \ts \varepsilon
\]
for all $n>n_\varepsilon$. Since $\varepsilon > 0$ was arbitrary, this
shows that $D^{}_{\nts\cF}(x_n,x) \xrightarrow{n\rightarrow\infty} 0$.
\end{proof}

Recall now the equivalence relation \eqref{eq:equiv} induced by the
pseudometric $D_{_\mathrm{W}}$ on $\XX$ via
\[
  x \sim y \; \Longleftrightarrow \, D_{_\mathrm{W}}(x,y) = 0 \ts .
\]
When $x,y \in \XX^{}_{0}$, this is also equivalent to $D(x,y) = 0$, by
Lemma~\ref{lemma:equivW}. Hence, $D$ extends to a well-defined metric
on $[\XX^{}_{0}]$, given by $D([x],[y]) = D(x,y)$, whenever
$x,y \in \XX^{}_{0}$.  Note that since $D$ and $D_{_\mathrm{W}}$
induce the same topology, the projection to the MEF remains a
continuous map from $(\XX^{}_{0},d)$ to
$\bigl( [\XX^{}_{0}] , D \bigr)$.
 
\begin{theorem}\label{thm:osc-upper-box}
  $\OSD(\XX,\RR^d)$ coincides with the upper box counting dimension of
  the metric space\/ $\bigl( [\XX^{}_{0}] , D \bigr)$.
\end{theorem}

\begin{proof}
  Let $\delta \in (0,\rho/2)$ and recall from
  Lemma~\ref{lemma:pseudonorm} that $D^{}_{\nts\delta}$ is a
  pseudometric on $\XX^{}_{0}$. Identifying points in $\XX^{}_{0}$
  with vanishing distance in the pseudometric $D^{}_{\nts\delta}$
  yields a quotient metric space
  $(\XX^{\delta}_{0} , D^{}_{\nts\delta})$ with $\XX^{\delta}_{0}$
  consisting of equivalence classes $[x]^{}_{\delta} \in
  \XX^{}_{0}$. Since $\nu$-separated sets in this space are in
  one-to-one correspondence with $(\delta,\nu)$-separated sets in
  $\XX^{}_{0}$, we obtain that the upper box counting dimension of
  $(\XX^{\delta}_{0} , D^{}_{\nts\delta})$ can be written as
\begin{equation}\label{eq:D-delta-dimension}
  \limsup_{\nu {\scriptscriptstyle\searrow} 0}
  \frac{\log \bigl( \Sep(\XX^{}_{0},\delta,\nu) \bigr)}
  {-\log(\nu)} \ts .
\end{equation}
By Lemmas~\ref{lemma:equivD} and \ref{lemma:equivW}, the map
$ \psi: \XX^{\delta}_{0} \xrightarrow{\quad} [\XX^{}_{0}]$ with
$[x]^{}_{\delta} \mapsto [x] $ is well-defined and bi-Lipshitz
continuous. Hence, the upper box counting dimension of
$([\XX^{}_{0}],D)$ coincides with the corresponding dimension of
$\bigl( \XX^{\delta}_{0} , D^{}_{\nts\delta} \bigr)$ given in
\eqref{eq:D-delta-dimension}. Up to taking a supremum over all
$\delta \in (0,\rho/2)$, this is precisely the expression for
$\OSD(\XX,\RR^d)$ given in Corollary~\ref{coro:ac0}.
\end{proof}

Let us now turn to the action of the inflation map on the transversal,
which will provide a powerful tool for the computation of the $\OSD$.

\section{The iterated function system}\label{sec:iter}

In Corollary~\ref{coro:X0}, we have seen that the inflation $\varrho$
maps the transversal $\XX^{}_{0}$ into itself. As we shall see, this
induces a map of $[\XX^{}_{0}]$ into itself.  Our plan is to show that
an appropriate partition of $[\XX^{}_{0}]$ is the fixed point of a
graph-directed iterated function system. In \cite{FG20}, a similar,
but simpler construction was used for constant-length substitutions,
which cannot be extended to the general tiling case.

We start by showing that the inflation indeed acts as a contraction on
$[\XX^{}_{0}]$. We recall a result of Solomyak \cite{Sol97}, who
showed that the dynamical system $(\XX,\RR^d)$ has pure-point spectrum
if and only if, for all $x \in \XX$ and every return vector $\trv$,
the asymptotic density of the discrepancy set
$\disc \bigl( \varrho^{n}(x) , \varrho^{n}(x+\trv )\bigr)$ tends to
zero as $n \to \infty$, that is,
\begin{equation} \label{eq:SolCond}
  \lim_{n\to\infty} D \bigl( \varrho^{n}(x),
     \varrho^{n}(x+\trv ) \bigr) \, = \, 0 \ts .
\end{equation}

Condition \eqref{eq:SolCond} can be verified with the help of the
overlap algorithm \cite{Sol97}, or variants thereof
\cite{SS02,AL11,AGL14}.  An \emph{overlap} in a pair of tilings $x$,
$x+ \trv$ consists of a pair of tiles, one from $x$ and one from
$x+ \trv$, such that their supports have an intersection with
non-empty interior. Such an overlap is like a new kind of tile. Its
support is the intersection of the supports of the two tiles, and its
type is a combination of the two tile types involved, together with
the offset of the two tiles relative to the support of the
overlap. Effectively, positioning $x$ and $x+ \trv$ on top of each
other, this procedure dissects the resulting pattern into compact
pieces, the overlaps.

Overlaps form equivalence classes under the translation action,
thereby defining \emph{overlap types}. We can define an induced
inflation action on the overlaps, by inflating the two tiles of an
overlap, and then splitting the overlap of the two inflated tiles into
tile overlaps. Solomyak has shown that, for FLC inflation tilings with
an inflation factor that is a \emph{Pisot--Vijayaraghavan} (PV)
number, the total number of different overlap types remains bounded
when $r$ is varied and the inflation iterated. The resulting
\emph{overlap inflation} is denoted by $\varrho^{}_{\oo}$, and scales
with the same inflation factor as $\varrho$. It is no longer
primitive, however.

In fact, there is a special subset of overlaps, the \emph{coincidence
  overlaps}, which consist of pairs of tiles of the same type at the
same position.  These transform among themselves under inflation, and
form a primitive subsystem. The remaining overlaps are called
\emph{discrepancy overlaps}.  The asymptotic density of the set
covered by the discrepancy overlaps can shrink under inflation only if
new coincidences are produced from discrepancy overlaps. We have the
following result of Solomyak.

\begin{theorem}[\cite{Sol97}]\label{thm:sol}
  The TTDS\/ $(\XX,\RR^d)$ has pure-point spectrum if and only if
  every overlap eventually produces a coincidence overlap under the
  overlap inflation.  \qed
\end{theorem}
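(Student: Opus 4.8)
The plan is to deduce the stated equivalence from Solomyak's spectral criterion \eqref{eq:SolCond}, by recasting the decay of the discrepancy density as a purely combinatorial reachability condition for the overlap inflation $\varrho_{\oo}$. The starting point is that, thanks to the PV property, $\varrho_{\oo}$ is a substitution on a \emph{finite} set of overlap types. Ordering these types so that the coincidence overlaps come first, the associated substitution matrix is block upper triangular,
$$
  M_{\oo} \, = \, \begin{pmatrix} A & B \\ 0 & C \end{pmatrix},
$$
where $A$ is the matrix of the coincidence subsystem, $C$ that of the discrepancy-to-discrepancy transitions, and $B$ records the production of coincidences out of discrepancies. Since coinciding tiles inflate into coinciding patches, $A$ reproduces the original (primitive) tile inflation and hence carries Perron--Frobenius eigenvalue $\lambda^d$, while the vector $\vec{w} > 0$ of overlap volumes satisfies $M_{\oo}^{\mathsf T}\vec{w} = \lambda^d \vec{w}$. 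Restricting this identity to the discrepancy block gives $\rho(C) \leqslant \lambda^d$, so that $\lambda^d$ is also the spectral radius of $M_{\oo}$.

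Next I would express $D\bigl(\varrho^n(x),\varrho^n(x+\trv)\bigr)$ as the relative volume fraction occupied by discrepancy overlaps in the $n$-th overlap inflation of the initial overlap configuration of the pair $(x, x+\trv)$; by unique ergodicity these frequencies are well defined and independent of $x$. This fraction is governed by $C^n/\lambda^{dn}$, and since every overlap type occurs in the configuration of some admissible pair, one obtains that \eqref{eq:SolCond} holds for all $x$ and all return vectors $\trv$ if and only if $\rho(C) < \lambda^d$.

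It then remains to identify $\rho(C) < \lambda^d$ with the condition that every overlap eventually produces a coincidence. Here I would examine the set $S$ of discrepancy overlaps that never produce a coincidence under any iterate of $\varrho_{\oo}$. This set is closed under the overlap inflation, so all constituents of an $S$-overlap again lie in $S$; restricting the volume identity to $S$ then makes $\vec{w}|_S > 0$ an eigenvector of $C|_S^{\mathsf T}$ for the eigenvalue $\lambda^d$, so that $\rho(C) \geqslant \rho(C|_S) = \lambda^d$, and hence $\rho(C) = \lambda^d$ by the bound above, as soon as $S \neq \varnothing$. Conversely, if $S = \varnothing$, then in every recurrent class $R$ of discrepancy overlaps at least one constituent must leave $R$ (towards a coincidence), for otherwise $R$ would be closed and all-discrepancy, i.e.\ contained in $S$; consequently $\vec{w}|_R$ is strictly subinvariant for $C|_R$, and the Collatz--Wielandt characterisation of the Perron--Frobenius eigenvalue forces $\rho(C|_R) < \lambda^d$ for each class, whence $\rho(C) < \lambda^d$.

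I expect the main obstacle to lie in this last step, namely in making the passage between the geometric volume accounting and the spectral estimate fully rigorous: one must verify that strict subinvariance on each recurrent class yields the \emph{strict} inequality $\rho(C) < \lambda^d$ rather than merely $\leqslant$, and treat the transient discrepancy overlaps linking the classes. Chaining these equivalences with \eqref{eq:SolCond} then establishes that pure-point spectrum is equivalent to every overlap eventually producing a coincidence.
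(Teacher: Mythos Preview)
The paper does not prove this theorem: it is quoted as a result of Solomyak, with the \qed placed directly after the statement to signal that no argument is given. Both the density criterion \eqref{eq:SolCond} and its combinatorial reformulation via the overlap algorithm are imported from \cite{Sol97}, so there is no paper proof to compare your proposal against.

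That said, your sketch is a sound reconstruction of how the two criteria are linked. The block-triangular structure of $M_{\oo}$, the volume identity $M_{\oo}^{\mathsf T}\vec{w} = \lambda^d \vec{w}$, and the Collatz--Wielandt estimate on recurrent classes are the right ingredients, and the obstacle you flag is not serious: once each recurrent class $R$ of discrepancy types satisfies $\rho(C|_R) < \lambda^d$, the spectrum of the block-triangular $C$ is the union of the class spectra, so $\rho(C) < \lambda^d$ follows without further work on the transient types. The one place to tighten is the passage from ``every overlap type occurs in some admissible pair'' to ``\eqref{eq:SolCond} fails when $S \neq \varnothing$'': for this you need an $S$-type to occur with \emph{positive frequency} in some pair $(x,x+\trv)$, not merely to occur once, since $D$ is a density. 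The paper supplies exactly this repetitivity argument later, in the proof of Proposition~\ref{prop:lamdamin}.
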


\begin{remark}\label{rem:bp}
  It may actually be advantageous to dissect the discrepancy region
  into different pieces, such as the \emph{balanced pair} \cite{SS02} 
  discrepancies in the one-dimensional case. The advantage is that
  discrepancy overlaps can then be combined into natural groups,
  which always occur together.  \exend
\end{remark}

\begin{example}\label{ex:fibo0}
  Let us illustrate discrepancies and overlaps with a simple example,
  the Fibonacci tiling \cite[Ex.~4.6]{TAO}. It is generated by an
  inflation
\[
  \varrho_{_\mathrm{F}} : \quad
  a \rightarrow ab, \quad b \rightarrow a,
\]
where $a$ and $b$ are two intervals of length
$\phi=\frac{1}{2}(1+\sqrt{5}\,)$ and $1$, respectively. The inflation
factor is $\phi$. We fist look at balanced pairs, which are
(unordered) pairs of patches of tiles having the same support. A
balanced pair is irreducible if the two patches have no inner vertices
in common. Otherwise, it can be split into a sequence of irreducible
ones at the shared inner vertices. Obviously, there are two balanced
pairs which consist of a pair of equal tiles, $A=(a,a)$ and $B=(b,b)$.
These are called \emph{coincidences}. If we superimpose two Fibonacci
tilings in such a way that they share some of the tiles, but not all,
we can decompose this bi-layer tiling into a sequence of irreducible
balanced pairs. It is not difficult to see that only a small set of
further balanced pairs occurs in this decomposition, namely
$C=(ab,ba)$ and $D=(aab,baa)$; compare \cite{SS02}.

The original inflation $\varrho$ can be extended to balanced pairs, by
inflating the two patches and then decomposing the result into
irreducible balanced pairs; compare Figure~\ref{fig:olap}. We obtain
\[
    A \rightarrow AB, \quad B \rightarrow A, \quad C \rightarrow AC, 
    \quad D \rightarrow ACC.
\]
We see that both discrepancies, $C$ and $D$, produce a coincidence $A$
under inflation, which proves that the Fibonacci TTDS has pure-point
spectrum. Moreover, $D$ is never produced by inflation, and so the
$D$s are quickly eliminated under inflation, and need not be
considered further. $C$ is therefore the only essential balanced pair
discrepancy remaining.

\begin{figure}
\begin{center}
\includegraphics[width=0.8\textwidth]{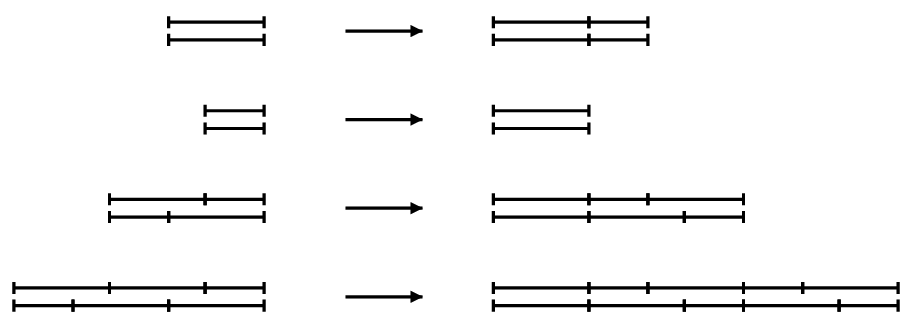}
\end{center}
\caption{\label{fig:olap} Inflation of balanced pair overlaps $A$ --
  $D$ (from top to bottom). }
\end{figure}

We will now decompose $C$ into discrepancy overlaps in the sense
defined above. The two patches in $C$ have two interior vertices that
are not shared. We cut $C$ at those vertices into three pieces which
are the three discrepancy overlaps, $C=C_1C_2C_3$, as in
Figure~\ref{fig:olap}. Here, $C_1$ consists of a pair of an $a$ tile
and a $b$ tile, which share their left vertices, resulting in an
overlap region of length $1$.  $C_3$ is analogous, except that here
the right vertices are shared.  $C_2$ consists of a pair of $a$ tiles,
which are shifted by a distance $1$ with respect to each other,
resulting in an overlap region of length $\phi-1$. The coincidences
$A$ and $B$ are also coincidence overlaps. Analogously to balanced
pairs, we can extend the inflation to overlaps, giving
\[
    C_1 \rightarrow A \ts , \quad C_2 \rightarrow C_1 ,
    \quad C_3 \rightarrow C_2C_3 \ts .
\]
We see that, after three iterations, every discrepancy overlap $C_i$
has produced at least one coincidence, which proves again the
pure-point nature of the Fibonacci tiling spectrum.  \exend
\end{example}

Since we \emph{assume} a system with topological pure-point spectrum,
Theorem~\ref{thm:sol} implies that some fixed power of the overlap
inflation produces a coincidence from every overlap.  This can now be
used to prove the following result.

\begin{prop}\label{prop:X0contract}
  Assume that the TTDS $(\XX,\RR^d)$ has topological pure-point
  spectrum. Then, there exist numbers\/ $n \in \NN$ and\/
  $\rc \in (0, 1)$, such that, for all\/
  $x,y \in \XX^{}_{0}$,
\[
    D \bigl( \varrho^n(x),\varrho^n(y)\bigr)
    \, \leqslant \, \rc \ts D(x,y) \ts .
\]
\end{prop}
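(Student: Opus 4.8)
The plan is to track the discrepancy density through the overlap inflation $\varrho^{}_{\oo}$ and to show that the guaranteed production of coincidences forces a uniform, strictly sub-unital rate of decay. First I note that, by Corollary~\ref{coro:X0}, $\varrho$ maps $\XX^{}_{0}$ into itself, so that $\varrho^n(x),\varrho^n(y)\in\XX^{}_{0}$ and $D\bigl(\varrho^n(x),\varrho^n(y)\bigr)$ is meaningful. Superimposing $x$ and $y$ yields a bi-layer whose overlap decomposition covers $\RR^d$ by the supports of overlaps; up to the null set formed by their boundaries, $\disc(x,y)$ is precisely the union of the supports of the discrepancy overlaps, whose types form a finite set $\cD$, while the coincidence overlaps make up $\coin(x,y)$. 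The key geometric observation is that applying $\varrho$ to both layers is compatible with the overlap inflation: if an overlap of type $j$ has support $U$ with $\vol(U)=v_j$, then in the bi-layer $\bigl(\varrho(x),\varrho(y)\bigr)$ the rescaled region $\lambda U$ is tiled exactly by the overlaps of $\varrho^{}_{\oo}(O_j)$, and iterating, $\lambda^n U$ is tiled by $\varrho^{n}_{\oo}(O_j)$. Since coincidence overlaps reproduce only coincidence overlaps, the portion of $\lambda^n U$ belonging to $\disc\bigl(\varrho^n(x),\varrho^n(y)\bigr)$ is exactly the union of the supports of the discrepancy overlaps occurring in $\varrho^{n}_{\oo}(O_j)$. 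I therefore define the discrepancy ratio
\[
 r^{(n)}_j \, \defeq \, \frac{\vol\bigl(\text{discrepancy part of }\varrho^{n}_{\oo}(O_j)\bigr)}{\lambda^{nd}\,v_j}\,\in\,[0,1],
\]
which vanishes for coincidence types and depends only on the combinatorics of $\varrho^{}_{\oo}$, not on the chosen pair $(x,y)$.

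Next I fix the exponent $n$. Because we assume topological pure-point spectrum, Theorem~\ref{thm:sol} guarantees that every overlap eventually produces a coincidence overlap under $\varrho^{}_{\oo}$; as there are only finitely many overlap types, a single power $n\in\NN$ works simultaneously for all of them. For such an $n$ and any discrepancy type $j\in\cD$, the patch $\varrho^{n}_{\oo}(O_j)$ contains at least one coincidence overlap of positive volume, whence $r^{(n)}_j<1$. Using finiteness of $\cD$ once more, I set
\[
 \rc \, \defeq \, \max_{j\in\cD} r^{(n)}_j \, < \, 1 .
\]

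Finally I convert the local volume estimate into the density bound. For a F{\o}lner set $F_k$, the set $\disc\bigl(\varrho^n(x),\varrho^n(y)\bigr)\cap F_k$ is, up to overlaps whose rescaled support $\lambda^n U$ meets $\partial F_k$, the disjoint union of the discrepancy parts of the patches $\varrho^{n}_{\oo}(O)$ sitting over the overlaps $O$ of $(x,y)$ with $\lambda^n\supp(O)\subseteq F_k$; by FLC and the F{\o}lner property these boundary overlaps contribute only $o(|F_k|)$. Each discrepancy overlap $O$ of type $j$ contributes at most $\rc\,\lambda^{nd}\vol(\supp O)$, while coincidence overlaps contribute nothing, so that
\[
 \bigl|\disc\bigl(\varrho^n(x),\varrho^n(y)\bigr)\cap F_k\bigr|
 \,\leqslant\, \rc\,\lambda^{nd}\,\bigl|\disc(x,y)\cap\lambda^{-n}F_k\bigr| \,+\, o(|F_k|).
\]
Since $(\lambda^{-n}F_k)^{}_{k\in\NN}$ is again a F{\o}lner sequence and $D$ is independent of the F{\o}lner sequence for the systems at hand (by unique ergodicity of the product system, as recorded above), the density of $\disc(x,y)$ along it equals $D(x,y)$; combined with $\lambda^{nd}|\lambda^{-n}F_k|=|F_k|$, dividing by $|F_k|$ and letting $k\to\infty$ yields $D\bigl(\varrho^n(x),\varrho^n(y)\bigr)\leqslant \rc\,D(x,y)$. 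I expect the main obstacle to be precisely this last bookkeeping step: one has to make the cancellation of the volume-expansion factor $\lambda^{nd}$ against the rescaling of the reference window rigorous, and to control the overlaps straddling $\partial F_k$, so that the strict contraction indeed originates solely from the guaranteed production of coincidences rather than from any change of normalisation.
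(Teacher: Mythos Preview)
Your argument is correct and follows the same essential mechanism as the paper's proof: both use Solomyak's coincidence criterion (Theorem~\ref{thm:sol}) together with the finiteness of overlap types to extract a uniform contraction constant for some power of~$\varrho$. The detailed F{\o}lner bookkeeping you supply at the end (rescaling the reference window by $\lambda^{-n}$, controlling the boundary overlaps, and invoking the F{\o}lner-independence of~$D$) is exactly the density argument the paper leaves implicit.

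There is one structural difference worth noting. The paper inserts a preliminary step: by the control-point construction (proof of Lemma~\ref{lemma:cp}), some fixed power $\varrho^{k_1}$ forces the tile at the origin to be of a single prescribed type, so that $\varrho^{k_1}(x)$ and $\varrho^{k_1}(y)$ share an explicit coincidence for \emph{every} pair $x,y\in\XX^{}_{0}$; only then does the paper invoke Theorem~\ref{thm:sol} for a further $k_2$ iterations. You skip this step and apply the overlap inflation directly to $(x,y)$. Your route is legitimate, but it silently relies on the fact that all overlap types occurring in pairs from $\XX^{}_{0}\times\XX^{}_{0}$ already lie within the finite collection handled by Solomyak's algorithm --- this follows from the Meyer property (bounded offsets lie in the uniformly discrete set $\vL-\vL$) together with minimality (so each such type already appears in some pair $(z,z+t)$), but you do not say so. The paper's two-stage argument sidesteps having to make this identification explicit, at the cost of a slightly larger exponent $n=k_1+k_2$; your one-stage argument is more direct and yields a sharper $n$, at the cost of needing that extra sentence of justification.
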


\begin{proof}
  We first note that, by the construction of our control points, there
  exists a power $\varrho^{k_1}$ of the inflation such that, for any
  $x,y\in\XX^{}_{0}$, the tilings $\varrho^{k_1}(x)$ and
  $\varrho^{k_1}(y)$ have some coincidences. Starting from such a
  pair, by Theorem~\ref{thm:sol}, there exists a power $\varrho^{k_2}$
  that produces a coincidence from every overlap. Hence, there exists
  $\rc <1$ such that
\[
  D \bigl( \varrho^{k_1+k_2}(x),\varrho^{k_1+k_2}(y) \bigr)
  \, \leqslant \, \rc \ts D
  \bigl( \varrho^{k_1}(x),\varrho^{k_1}(y)
  \bigr) \, \leqslant \, \rc \ts D(x,y) \ts .
\]
Consequently, $\varrho^{k_1+k_2}$ is a contraction on $\XX^{}_{0}$
with contraction constant $\rc$ relative to the
pseudometric $D$.
\end{proof}

As all powers of $\varrho$ generate the same hull, we may assume
$D(\varrho(x),\varrho(y)) \leqslant \rc \ts D(x,y)$ from
now on.  We are now ready to construct appropriately defined
components of $[\XX^{}_{0}]$ as the unique fixed point of a
(graph-directed) iterated function system (GIFS). For this purpose, we
partition $\XX^{}_{0}$ into cylinder sets $\XX_i$. Labelling tile
types with natural numbers $i \in \{ 1,2,\ldots, n_{\tau} \}$, the
cylinder $\XX_i$ consists of all tilings in $\XX^{}_{0}$ that have a
tile of type $i$ with its control point at the origin. Due to the
recognisability property of aperiodic inflation tilings, the tile at
the origin of any tiling in $\XX_i$ is contained in a unique
supertile, which must be positioned so that it has a tile of type $i$
at the origin.  Hence, $\XX_i$ is a disjoint union of smaller cylinder
sets,
\begin{equation}\label{eq:ifs1}
  \XX_i \, = \, \bigcup_j\bigcup_k
  \bigl(\varrho(\XX_j) - t_{ij;k} \bigr) ,
\end{equation}
where $j$ runs over all supertile types which contain a tile of type
$i$, and the $t_{ij;k}$ are the positions of such a tile within the
supertile.

Applying the projection to the MEF on both sides of
Eq.~\eqref{eq:ifs1} (and using that $\varrho$ commutes with this
factor map) yields a similar relation with $\XX_i$ replaced by
$[\XX_i]$, for all $i$. In contrast to the approach of \cite{FG20},
the unions will no longer be completely disjoint, as there may be
fibres which are split over several cylinders.  These overlaps will be
small, however.  Let us now translate \eqref{eq:ifs1} (with $\XX_i$
replaced by $[\XX_i]$) into the language of a GIFS; compare
\cite{EG99}.

\begin{definition}
  We call
  $\bigl\{ \cG, \{(X_i,d_i)\}_{i \in V}, \{f_e \}_{e \in E} \bigr\}$ a
  GIFS if $\cG$ is a graph with vertex set $V$ and edge set $E$, each
  $(X_i,d_i)$ is a compact metric space, and for each $e \in E_{ij}$,
  the set of edges from $i$ to $j$, we have that $f_e: X_j \to X_i$ is
  a strict contraction.  A tuple $\{Y_i \}_{i \in V}$ of compact sets
  $Y_i \subseteq X_i$ is called an \emph{invariant list} of the GIFS if
  $ Y_i = \bigcup_{j \in V} \bigcup_{e \in E_{ij}} f_e(Y_j)$ for all
  $i \in V$.
\end{definition}

Here, we let $V$ be the set of tile types and may choose
$(X_i,d_i) = \bigl( [\XX_i], D \bigr)$ for the metric spaces. Note
that each $[\XX_i]$ is compact as the image of a compact set under a
continuous map. There is an edge $e$ for each occurrence of a tile of
type $i$ in a supertile of type $j$, indexed by $k$. If $t_{ij;k}$ is
the corresponding position in the supertile, we set
$f_e \bigl( [x] \bigr) \defeq \varrho \bigl( [x] \bigr) - t_{ij;k}$.
Since the metric $D$ is translation invariant, we conclude from
Proposition~\ref{prop:X0contract} that the maps $f_e$ are indeed all
strict contractions.  With this notation, the projection of
\eqref{eq:ifs1} to the MEF yields
\begin{equation}\label{eq:ifs}
  [\XX_i] \, = \, \bigcup_{j\in V}\bigcup_{e\in E_{ij}}
  f_e \bigl( [\XX_j] \bigr) .
\end{equation}
That is, $\bigl\{ [\XX_i] \bigr\}_{i \in V}$ is an invariant list of
this GIFS and the $\OSD$ is nothing but the (upper) box dimension of
$[\XX^{}_{0}] = \bigcup_{i \in V} [\XX_i]$ relative to $D$.  If $\cG$
is strongly connected, which is the case for primitive inflation
tilings, the invariant list is in fact unique.  We note that the
fractal dimension of the fixed point space
$\bigl\{ [\XX_i] \bigr\}_{i \in V}$, relative to $D$, can be estimated
from the graph $\cG$ and the contraction rates of the maps $f_e$.
This is a classic result if each $f_e$ is a similarity on a Euclidean
space. A version for more general contractions on metric spaces,
involving upper and lower bounds on the contraction rates, can be
found in \cite{EG99}. We recall it here for the sake of being
self-contained. In order to obtain a lower bound, we require an
appropriate separation condition.

\begin{definition}
  We say that the GIFS
  $\bigl( \cG, \{X_i \}^{}_{i \in V}, \{f_e \}^{}_{e \in E} \bigr)$
  with invariant list $\{ Y_i \}^{}_{i \in V}$ satisfies the
  \emph{strong open set condition} (SOSC) if there is a collection of
  open sets $\{U_i \}^{}_{i \in V}$ with the following properties.
\begin{enumerate}\itemsep=2pt
\item For all $i,j \in V$, $e \in E_{ij}$, we have
  $f_e(U_j)\subseteq U_i$.
\item For all $i,j,j' \in V$, $e\in E_{ij}$, $e'\in E_{ij'}$,
  $e \ne e'$, we have $f_e(U_j) \cap f_{e'}(U_{j'}) = \varnothing$.
\item For all $i \in V$, we have $U_i \cap Y_i \ne \varnothing$.
\end{enumerate}
\end{definition}

The following is a special case (uniform bounds on all contractions)
of a slightly more general result in \cite{EG99}. To formulate the
result, let $M$ denote the incidence matrix of the graph $\cG$,
with entries $M_{ij} = \card ( E_{ij} )$, and $\rho^{}_{\mathrm{sp}}(M)$ the
corresponding spectral radius.

\begin{theorem}[\cite{EG99}]\label{thm:EG-dimension-bounds}
  Let\/
  $\bigl( \cG, \{X_i \}^{}_{i \in V}, \{f_e \}^{}_{e \in E} \bigr)$ be
  a GIFS with invariant list\/ $\{ Y_i \}^{}_{i \in V}$ and let\/ $\cG$
  be strongly connected. Assume that the GIFS satisfies the SOSC and
  let\/ $\dim$ denote Hausdorff dimension, lower box counting
  dimension or upper box counting dimension. If there are numbers\/
  $0 < \rc \leqslant r^{\ts\prime}_{\nts\mathrm{c}} <1$ with
\[
  \rc \ts d^{}_j(x,y) \, \leqslant \,
  d^{}_i \bigl( f^{}_e(x) , f^{}_e (y) \bigr) \, \leqslant \,
  r^{\ts\prime}_{\nts\mathrm{c}} \ts d^{}_j(x,y) \ts ,
\]
for all\/ $e \in E_{ij}$, $i,j \in V$ and\/ $x,y \in X_j$, then, for
all\/ $i \in V$, we have that
\[
  \pushQED{\qed}
  \frac{\log \bigl(\rho^{}_{\mathrm{sp}}(M)\bigr)}
       {- \log(\rc)} \, \leqslant
  \, \dim ( Y^{}_i ) \, \leqslant \, \frac{\log
    \bigl(\rho^{}_{\mathrm{sp}}(M)\bigr)}
    {- \log(r^{\ts\prime}_{\nts\mathrm{c}})} \ts .
  \qedhere \popQED
\]
\end{theorem}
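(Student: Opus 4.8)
The plan is to reduce all three dimension notions to a single two-sided estimate by exploiting the elementary chain $\dim_{\mathrm H}(Y_i) \le \underline{\dim}_{\mathrm B}(Y_i) \le \overline{\dim}_{\mathrm B}(Y_i)$, valid for every totally bounded metric space. It then suffices to prove the upper bound for the upper box counting dimension and the lower bound for the Hausdorff dimension, the remaining cases being squeezed in between. Throughout, I would index the pieces of $Y_i$ by finite \emph{admissible paths} in $\cG$: a path $\omega = e_1 e_2 \cdots e_m$ with $e_1 \in E_{i i_1}, e_2 \in E_{i_1 i_2}, \ldots$ gives a composition $f_\omega = f_{e_1} \circ \cdots \circ f_{e_m}$, and iterating the invariance relation $Y_i = \bigcup_{j} \bigcup_{e \in E_{ij}} f_e(Y_j)$ yields $Y_i = \bigcup_{|\omega| = m} f_\omega(Y_{t(\omega)})$, where $t(\omega)$ denotes the terminal vertex of $\omega$. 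The per-edge bounds compose multiplicatively, so that $\mathrm{diam}\, f_\omega(Y_{t(\omega)})$ lies between $\rc^{\ts m} d_{\min}$ and $(r^{\ts\prime}_{\nts\mathrm{c}})^m d_{\max}$, where $d_{\min}, d_{\max}$ are the extremal diameters of the finitely many $Y_j$.

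For the upper bound, I would fix $m$ and cover $Y_i$ by the generation-$m$ cylinders $f_\omega(Y_{t(\omega)})$. Their number is $\sum_{j \in V} (M^m)_{ij}$, and since $\cG$ is strongly connected the matrix $M$ is irreducible, so by Perron--Frobenius $\rho^{}_{\mathrm{sp}}(M)$ is a simple eigenvalue with a strictly positive eigenvector and $\sum_j (M^m)_{ij} \le C_1\, \rho^{}_{\mathrm{sp}}(M)^m$ for some constant $C_1$. Each such cylinder has diameter at most $\varepsilon_m \defeq (r^{\ts\prime}_{\nts\mathrm{c}})^m d_{\max}$, so the covering number at scale $\varepsilon_m$ satisfies $N(\varepsilon_m) \le C_1\, \rho^{}_{\mathrm{sp}}(M)^m$. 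Inserting this into the definition of the upper box counting dimension and letting $m \to \infty$ gives the bound $\log(\rho^{}_{\mathrm{sp}}(M)) / (-\log(r^{\ts\prime}_{\nts\mathrm{c}}))$; the intermediate scales $\varepsilon \in (\varepsilon_{m+1}, \varepsilon_m]$ cause no trouble, as consecutive scales are comparable by the fixed factor $r^{\ts\prime}_{\nts\mathrm{c}}$.

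For the lower bound, I would build a Frostman measure from the Perron data. Let $\bs{v} = (v_j)_{j \in V}$ be the positive right eigenvector, $M \bs{v} = \rho^{}_{\mathrm{sp}}(M)\, \bs{v}$, and assign to each cylinder the weight $\mu_i\bigl(f_\omega(Y_{t(\omega)})\bigr) \defeq \rho^{}_{\mathrm{sp}}(M)^{-|\omega|}\, v_{t(\omega)} / v_i$. The eigenvector relation makes the weights of the children of any cylinder sum to the weight of the cylinder itself, so $\mu_i$ extends to a Borel probability measure on $Y_i$. Writing $s = \log(\rho^{}_{\mathrm{sp}}(M)) / (-\log(\rc))$, I would then control $\mu_i(B(x,r))$ for small $r$ by summing the weights of the cylinders in the stopping set $\Lambda_r$ of minimal paths $\omega$ with $\mathrm{diam}\, f_\omega(Y_{t(\omega)}) < r$. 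The crucial point is that the lower diameter bound $\mathrm{diam}\, f_\omega(Y_{t(\omega)}) \ge \rc^{\ts|\omega|} d_{\min}$ forces $|\omega|$ to be large enough that each such weight is at most $C\, r^s$, uniformly in $\omega$ and $r$; this is exactly why the lower bound is governed by $\rc$ rather than $r^{\ts\prime}_{\nts\mathrm{c}}$.

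The main obstacle is the mass distribution estimate $\mu_i(B(x,r)) \le C'\, r^s$, where the SOSC enters. The point is that a ball of radius $r$ can meet only boundedly many cylinders of $\Lambda_r$. I would make this quantitative through the open sets $\{U_i\}$: by property~(1) the images $f_\omega(U_{t(\omega)})$ are nested along the tree of paths, by property~(2) the children at each node are pairwise disjoint, and by property~(3) each carries positive $\mu_i$-mass. A packing argument then shows that a single ball of radius $r$ meets at most $\kappa$ cylinders of $\Lambda_r$, with $\kappa$ independent of $r$, since these cylinders contain disjoint open sets of diameter bounded below by a fixed multiple of $r$. Summing the $O(r^s)$ weights over these at most $\kappa$ cylinders yields $\mu_i(B(x,r)) \le \kappa\, C\, r^s$, and the mass distribution principle gives $\dim_{\mathrm H}(Y_i) \ge s$, completing the squeeze. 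The only subtlety requiring care is that the contraction ratios range in $[\rc, r^{\ts\prime}_{\nts\mathrm{c}}]$ rather than being constant, so $\Lambda_r$ is a genuine antichain in the path tree rather than a single generation; the eigenvector-weighted measure remains consistent along it, and the diameter bounds keep weights and radii matched up to multiplicative constants.
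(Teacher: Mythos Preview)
The paper does not supply a proof of this theorem; it is quoted from \cite{EG99} and the statement is closed with a \qed{} immediately, after which the text moves straight on to verifying the SOSC in the tiling setting. There is therefore no in-paper argument to compare your sketch against.

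For what it is worth, your outline --- Perron--Frobenius counting of generation-$m$ cylinders for the upper box bound, and a Perron-eigenvector-weighted cylinder measure combined with the mass distribution principle and the SOSC for the lower Hausdorff bound, with the chain $\dim_{\mathrm H}\leqslant\underline{\dim}_B\leqslant\overline{\dim}_B$ handling the intermediate cases --- is the standard route and is what one expects to find in \cite{EG99}. The one place where your sketch is thin is the packing step: the theorem is stated for arbitrary compact metric spaces $(X_i,d_i)$, and in that generality the assertion that ``a ball of radius $r$ meets at most $\kappa$ cylinders of $\Lambda_r$ because these contain disjoint open sets of diameter bounded below by a fixed multiple of $r$'' is not a volume argument one can simply invoke --- there is no ambient Lebesgue-type measure to pack against. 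You correctly flag this as the main obstacle, but the phrasing suggests a Euclidean packing bound that is not available here; the actual resolution requires either an additional finite-dimensionality hypothesis on the ambient spaces or a more delicate self-referential use of the measure $\mu_i$ together with the disjointness of the images $f_\omega(U_{t(\omega)})$, which is where \cite{EG99} does genuine work.
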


In view of this result, we naturally strive to verify the SOSC for the
GIFS at hand.  For this purpose, we define sets
$U_j = \bigl\{[x] \in [\XX_j]: [x] \subset \XX_j \bigr\}$, consisting
of fibres with are completely contained in a single cylinder.  Since
generic fibres contain a single element \cite{ABKL}, the sets $U_j$
are all non-empty.  Let us point out that an equivalence class
$[x] \in [\XX_j]$ always intersects $\XX_j$, but may also contain
points from $\XX\setminus\XX_j$. However, since we are dealing with
repetitive Meyer sets, it is known that there are only finitely many
possible positions for the closest control point of type $j$ to the
origin in $y$ if $y \sim x \in \XX_j$; compare
\cite[Cor.~4.17]{ABKL}. In a slightly different formulation, this
yields the following result.

\begin{lemma}\label{lem:fiber-separation}
  Given\/ $(\XX,\RR^d)$ as above, there is a characteristic constant\/
  $\kappa > 0$ with the following property. If\/ $y \sim x \in \XX_j$
  and\/ $y \notin \XX_j$, then\/ $d(x',y) > \kappa$ for all\/
  $x'\in\XX_j$. \qed
\end{lemma}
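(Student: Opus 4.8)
The plan is to turn the finiteness statement recorded just before the lemma into a uniform metric separation, by exploiting that the labelled control point set is MLD with the tiling (Remark~\ref{rem:cp2}). First I would phrase the consequence of \cite[Cor.~4.17]{ABKL} in the form needed here: there is a \emph{finite} set $P\subset\RR^d$, depending only on the type $j$ and not on the individual fibre, such that whenever $y\sim x$ with $x\in\XX_j$, the control point of type $j$ in $y$ closest to the origin lies in $P$. The finiteness is uniform over all admissible $y$ because $(\XX,\RR^d)$ is repetitive with FLC and the difference set of control points is a Meyer set, so only finitely many local configurations of control points around the origin occur along a fibre. If $y\in\XX_j$, this nearest point is $0$; since we assume $y\notin\XX_j$, it is some $p\in P\setminus\{0\}$. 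As $P$ is finite, the quantity $\eta\defeq\min\{\,|p| : p\in P\setminus\{0\}\,\}$ is strictly positive, and it is our key uniform constant.

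Next I would convert closeness in the tiling metric into agreement of control points. Let $R_0$ be the (finite) local derivability radius of the MLD pair consisting of a tiling and its labelled control point set. Fix any $x'\in\XX_j$, so that $x'$ carries a control point of type $j$ at the origin, and suppose toward a contradiction that $d(x',y)\leqslant\kappa$ for a constant $\kappa<2^{-1/2}$ still to be chosen. By the definition of the tiling metric, there are translations $t_1,t_2\in B_{2\kappa}(0)$ such that $x'$ and $y$ agree, up to the relative shift $s\defeq t_2-t_1$ of length $|s|<2\kappa$, on the ball $B_{1/(2\kappa)}(0)$. By the MLD property, their labelled control point sets then agree, up to the same shift $s$, on the ball $B_{1/(2\kappa)-R_0}(0)$. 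Provided $\kappa$ is small enough that $1/(2\kappa)-R_0>2\kappa$, the origin lies well inside this ball, so the type-$j$ control point of $x'$ at the origin forces $y$ to possess a type-$j$ control point within distance $|s|<2\kappa$ of the origin.

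Finally I would close the loop by choosing $\kappa$. The control point of $y$ just produced contradicts the lower bound $\eta$ on the nearest type-$j$ control point of $y$ as soon as $2\kappa<\eta$. Hence it suffices to fix any $\kappa$ with $0<\kappa<\eta/2$ that is also small enough to satisfy $1/(2\kappa)-R_0>2\kappa$; for this $\kappa$ the assumption $d(x',y)\leqslant\kappa$ is impossible, and therefore $d(x',y)>\kappa$ for every $x'\in\XX_j$, as claimed. Since $\kappa$ depends only on $\eta$ and the global constant $R_0$, taking the minimum over the finitely many types $j$ yields a single characteristic constant for $(\XX,\RR^d)$. The main obstacle is exactly this \emph{uniformity}: one must guarantee that the finite position set $P$, and hence $\eta$, can be chosen independently of the fibre and of $y$, which is precisely what repetitivity together with \cite[Cor.~4.17]{ABKL} provides.
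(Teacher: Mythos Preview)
Your argument is correct and is precisely the expansion the paper has in mind: the lemma is stated with a \qed and no proof, merely pointing to \cite[Cor.~4.17]{ABKL} for the finiteness of the admissible nearest control point positions, which you then convert into a uniform metric gap via the MLD relation between tilings and labelled control point sets. The only thing the paper adds is nothing --- you have supplied the details it omits, along the exact route it indicates.
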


This dichotomy is useful for the proof of the following observation.

\begin{lemma}\label{lemma:Uopen}
  For each\/ $j$, the set\/
  $U_j = \bigl\{ [x] \in [\XX_j]: [x] \subseteq \XX_j \bigr\}$ is open
  in\/ $[\XX_j]$.
\end{lemma}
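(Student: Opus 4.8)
The plan is to prove that the complement $[\XX_j]\setminus U_j$ is closed in $[\XX_j]$, exploiting the compactness of the transversal in the local topology together with the continuity of the factor map $\pi\colon(\XX^{}_{0},d)\to([\XX^{}_{0}],D)$. Concretely, I would take a sequence $[x_n]\in[\XX_j]\setminus U_j$ converging to some $[x]$ in the metric $D$. Since $[\XX_j]$ is compact, being the continuous image of the compact cylinder $\XX_j$, it is closed, so automatically $[x]\in[\XX_j]$; it then remains to check that $[x]\notin U_j$, that is, that the fibre $[x]$ also leaves $\XX_j$.

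For each $n$, the fibre $[x_n]$ meets $\XX_j$ (because $[x_n]\in[\XX_j]$) but is not contained in it (because $[x_n]\notin U_j$), so I may select both a point $y_n\in[x_n]$ with $y_n\notin\XX_j$ and a point $\tilde x_n\in[x_n]\cap\XX_j$. Applying the dichotomy of Lemma~\ref{lem:fiber-separation} to the pair $\tilde x_n\in\XX_j$ and $y_n\notin\XX_j$ yields the \emph{uniform} separation $d(x',y_n)>\kappa$ for every $x'\in\XX_j$ and every $n$. The key observation is that the off-cylinder representatives $y_n$ all lie in the compact space $(\XX^{}_{0},d)$, so after passing to a subsequence I may assume $y_n\to y$ in the local topology.

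Continuity of $\pi$ then gives $\pi(y_n)\to\pi(y)$; but $\pi(y_n)=[x_n]\to[x]$, and uniqueness of limits in the metric space $([\XX^{}_{0}],D)$ forces $\pi(y)=[x]$, i.e.\ $y\in[x]$. On the other hand, passing to the limit in $d(x',y_n)>\kappa$, using continuity of $d$, gives $d(x',y)\geqslant\kappa>0$ for all $x'\in\XX_j$; taking $x'=y$ would give $0\geqslant\kappa$, so $y\notin\XX_j$. Hence the limit fibre $[x]$ contains a point outside $\XX_j$, so $[x]\in[\XX_j]\setminus U_j$, and the complement is closed. The one subtle step, and the precise reason Lemma~\ref{lem:fiber-separation} is invoked, is that the convergence $[x_n]\to[x]$ takes place in the quotient metric $D$, whereas the separation constant $\kappa$ is measured in the a priori unrelated local metric $d$: the dichotomy supplies exactly the uniform $d$-gap that survives the limit, while the compactness of the transversal is what produces the limiting off-cylinder point $y$ in the first place.
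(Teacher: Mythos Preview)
Your proof is correct and follows essentially the same route as the paper's: both arguments select off-cylinder representatives $y_n\in[x_n]\setminus\XX_j$, invoke Lemma~\ref{lem:fiber-separation} to obtain a uniform $d$-gap from $\XX_j$, extract a convergent subsequence by compactness, and then use continuity of the factor map to identify the limit fibre, concluding that it contains a point outside $\XX_j$; the only cosmetic difference is that the paper phrases this as a contradiction to $[x]\in U_j$, whereas you phrase it as showing the complement is sequentially closed. One minor imprecision: the points $y_n$ need not lie in $\XX^{}_{0}$ (the fibre $[x_n]$ is an equivalence class in $\XX$, not in $\XX^{}_{0}$), but since $\XX$ itself is compact and $\pi$ is continuous on all of $\XX$, this does not affect the argument.
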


\begin{proof}
  Assume the contrary. Then, there exists a point $[x]\in U_j$ such
  that every open neighbourhood of $[x]$ contains a point in
  $[\XX_j] \setminus U_j$. That is, we can find a sequence of points
  $[x_n] \in [\XX_j] \setminus U_j$ such that $[x_n] \to [x]$ as
  $n \to \infty$. By the definition of $U_j$ and $[\XX_j]$, every
  equivalence class $[x_n]$ has a representative $x_n \notin \XX_j$
  and a representative $x^{\ts\prime}_n \in \XX_j$ with
  $x^{}_n \sim x^{\ts\prime}_n$.  By Lemma~\ref{lem:fiber-separation},
  the distance of $x_n$ to $\XX_j$ is bounded away from zero. This
  shows that the limit point $x^{\ts\prime}$ of an arbitrary converging
  subsequence $(x_{n_j})$ is not contained in $\XX_j$. However, due to
  the continuity of the factor map,
\[
  [x{\ts}'] \, = \, \bigl[ \ts \lim_{j \to \infty} x_{n_j} \ts \bigr]
  \, = \, \lim_{j \to \infty} [x_{n_j}] \, = \, [x] \ts .
\] 
Consequently, the fibre $[x] \in U_j$ contains an element $x{\ts}'$ in
the complement of $\XX_j$, in contradiction to the definition of
$U_j$.
\end{proof}

\begin{prop}\label{prop:sosc}
  Assume again that we have a non-periodic tiling dynamical system
  with pure-point spectrum, which is generated by a primitive
  inflation rule.  Then, the corresponding GIFS satisfies the SOSC,
  with open sets\/ $U_j$ as defined above.
\end{prop}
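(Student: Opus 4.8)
The plan is to verify the three defining properties of the SOSC for the family $\{ U_j \}_{j \in V}$, where $U_j = \{ [x] \in [\XX_j] : [x] \subseteq \XX_j \}$. Openness of each $U_j$ in $[\XX_j]$ is already established in Lemma~\ref{lemma:Uopen}, and non-emptiness (property~(3)) follows at once from the fact that generic fibres are singletons, so $U_j \cap [\XX_j] \neq \varnothing$; indeed $U_j \subseteq [\XX_j] = Y_j$ already, so $U_j \cap Y_j = U_j \neq \varnothing$. Thus the substance of the proof lies in verifying the invariance property~(1) and the disjointness property~(2) for the contractions $f_e \bigl( [x] \bigr) = \varrho \bigl( [x] \bigr) - t_{ij;k}$.

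For property~(1), I would take $[x] \in U_j$, so that the fibre $[x]$ is entirely contained in the cylinder $\XX_j$, and show $f_e \bigl( [x] \bigr) \subseteq U_i$ for each $e \in E_{ij}$. Concretely, applying $\varrho$ and then translating by $-t_{ij;k}$ sends a tiling with a type-$j$ control point at the origin to a tiling in which the chosen constituent tile of type $i$ sits at the origin; by the control-point property~(2) of Lemma~\ref{lemma:cp} and Eq.~\eqref{eq:set-vs-eq-class}, the image fibre consists precisely of the translated inflates of the elements of $[x]$. Since every representative of $[x]$ lies in $\XX_j$ and inflation of a tiling in $\XX_j$ (followed by the appropriate translation) yields a tiling whose origin-tile of type $i$ is genuinely present, one checks that the whole image fibre lands in $\XX_i$, so $f_e \bigl( [x] \bigr) \in U_i$.

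For property~(2), the disjointness of distinct images, I would argue that two distinct edges $e \neq e'$ correspond either to different supertile types or to different positions of a type-$i$ tile within supertiles. By recognisability (unique composition), a tiling in $\XX_i$ determines its supertile decomposition uniquely, so the supertile type containing the origin tile and the position of that origin tile within the supertile are both uniquely determined. A point in $f_e \bigl( U_j \bigr) \cap f_{e'} \bigl( U_{j'} \bigr)$ would be a fibre all of whose representatives lie in $\XX_i$ and which simultaneously arises as the inflate-and-shift from two incompatible supertile data, contradicting uniqueness of the predecessor. The role of insisting that the fibres lie in $U_i$ (rather than merely in $[\XX_i]$) is exactly to rule out boundary ambiguities where a fibre straddles several cylinders; the constant $\kappa$ of Lemma~\ref{lem:fiber-separation} guarantees that such straddling fibres are uniformly separated from $\XX_i$ and hence excluded from the $U$-sets.

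The main obstacle I expect is property~(2), and more precisely handling the fibres that are not singletons. Because the factor map to the MEF is only bounded-to-one, a single class $[x]$ may contain several tilings, and a priori these could be distributed across different cylinders or different supertile positions, which is what makes the unions in \eqref{eq:ifs} fail to be perfectly disjoint. The crucial point to nail down is that \emph{within} the sets $U_i$, where every representative is forced into a single cylinder $\XX_i$, recognisability upgrades to genuine disjointness: the multi-valuedness of the fibre cannot spread the origin-tile data across two distinct edges. I would therefore spend the bulk of the argument showing that membership in $U_i$ forces all representatives to share the same supertile type and the same internal position, so that the two edge-labels determining $e$ and $e'$ must coincide, contradicting $e \neq e'$.
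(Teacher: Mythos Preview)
Your approach is essentially the paper's: verify the three SOSC conditions directly, using non-emptiness of generic fibres for~(3), the set identity \eqref{eq:set-vs-eq-class} together with $[x]\subseteq\XX_j$ for~(1), and recognisability for~(2). The one point where you diverge is the anticipated ``main obstacle'' in~(2): you plan to spend the bulk of the argument showing that membership in $U_i$ forces all representatives to share the same supertile data. This detour is unnecessary. Once property~(1) is in hand you already know that, as a \emph{set of tilings}, the fibre $f_e\bigl([x]\bigr)=\{f_e(y):y\in[x]\}$ is contained in $f_e(\XX_j)$, and likewise $f_{e'}\bigl([x']\bigr)\subseteq f_{e'}(\XX_{j'})$. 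Recognisability makes the subsets $f_e(\XX_j)$ and $f_{e'}(\XX_{j'})$ of $\XX$ disjoint whenever $e\neq e'$; hence the two fibres have no tiling in common and, being equivalence classes, must be distinct. The multi-valuedness of the factor map never enters, and neither does the constant~$\kappa$ of Lemma~\ref{lem:fiber-separation} (that lemma is only used to establish openness of the $U_j$).
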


\begin{proof}
  Since $U_j \subseteq [\XX_j]$ is non-empty, the third condition is
  immediate.  Let $e \in E_{ij}$ and $[x] \in U_j$. Then, all points
  in the equivalence class $f_e \bigl( [x] \bigr)$ are contained in
  $f_e(\XX_j)$.  Indeed, using \eqref{eq:set-vs-eq-class} in the first
  step, we obtain
\[
  f_e \bigl( [x] \bigr) \, = \, \bigl\{ f_e(x') : x'\in [x] \bigr\}
  \, \subseteq \, f_e (\XX_j) \ts ,
\]
because $[x] \in U_j$, hence $[x]  \subseteq \XX_j$.  Since
$f_e(\XX_j) \subseteq \XX_i$, it follows that each element of the
fibre $f_e \bigl( [x] \bigr)$ is also contained in $\XX_i$, which
shows that $f_e \bigl( [x] \bigr) \in U_i$. That is,
$f_e(U_j) \subseteq U_i$, as required for the first condition.
Finally, if $e \in E_{ij'}$ and $[x{\ts}'] \in U_{j'}$ we see by
identical arguments as before that the fibre
$f_{e'} \bigl( [x'] \bigr)$ is contained in $f_{e'}(\XX_{j'})$. By
recognisability of the inflation, this set is disjoint from
$f_e(\XX_j)$. Hence, the fibres $f_{e} \bigl( [x] \bigr)$ and
$f_{e'} \bigl( [x{\ts}'] \bigr)$ have no point in common, that is,
$f_{e} \bigl( [x] \bigr) \neq f_{e'} \bigl( [x{\ts}'] \bigr)$ for all
$[x] \in U_j$ and $[x{\ts}'] \in U_{j'}$. This is precisely what is
required for the second condition.
\end{proof}

At this point, we have established that
Theorem~\ref{thm:EG-dimension-bounds} can indeed be applied to the
invariant list $\bigl\{ [\XX_i] \bigr\}_{i \in V}$. This gives bounds
for the upper box counting dimension of
$\bigl( [\XX^{}_{0}], D \bigr)$, where the latter coincides with the
$\OSD$ by Theorem~\ref{thm:osc-upper-box}. The precise values of the
bounds depend on estimates for the contraction rates of the maps
$f_e$, which are the same as the corresponding estimates for
$\varrho$. As passing to higher powers of $\varrho$ might yield better
bounds, this motivates the following definition.

\begin{definition}
  For each $n \in \NN$, let $r_n,R_n \geqslant 0$ be such that
\[
  r_n D(x,y) \, \leqslant \, D \bigl( \varrho^n(x),
  \varrho^n(y) \bigr) \, \leqslant \, R_n D(x,y) \ts ,
\]
for all $x,y \in \XX_i$ and $i \in V$, and assume that $r_n$ is
maximal ($R_n$ is minimal) with this property.
\end{definition}

We are interested in the decay properties of these contraction bounds
as $n \to \infty$.

\begin{lemma}
  The sequence\/ $(R_n)^{}_{n \in \NN}$ is sub-multiplicative, while the
  sequence\/ $(r_n)^{}_{n \in \NN}$ is super-multiplicative. In particular,
  the Lyapunov exponents
\[
  \lambda_{\eL}^{\max} \, = \lim_{n \to \infty}
  \frac{\log (R_n)}{n} \, = \inf_{n \in \NN }
  \frac{\log (R_n)}{n} \quad \text{and} \quad
  \lambda_{\eL}^{\min} \, = \lim_{n \to \infty}
  \frac{\log (r_n)}{n} \, = \, \sup_{n \in \NN }
  \frac{\log (r_n)}{n} 
\]
are well defined.
\end{lemma}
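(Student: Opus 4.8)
\emph{Proof proposal.} The plan is to reduce the whole statement to Fekete's (sub)additive lemma, once the two multiplicativity estimates are established. The structural fact I would prove first is that inflation acts on the cylinders $\XX_i$ in a way that is \emph{deterministic} on the tile type at the origin. Indeed, if $x\in\XX_i$, the tile $\tau$ of type $i$ with $\cp(\tau)=0$ is sent by $\varrho$ to the dissected supertile $\lambda\tau$, and by property~(2) of Lemma~\ref{lemma:cp} its distinguished successor $\suc(\tau)$ satisfies $\cp(\suc(\tau))=\lambda\,\cp(\tau)=0$. Hence the origin is again a control point, now of a tile whose type, call it $\suc(i)$, depends only on $i$. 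This gives $\varrho(\XX_i)\subseteq\XX_{\suc(i)}$ and, upon iteration, $\varrho^n(\XX_i)\subseteq\XX_{\suc^n(i)}$, so that both $\varrho^n(x)$ and $\varrho^n(y)$ land in one and the same cylinder whenever $x,y$ do. This is exactly what legitimises chaining the contraction bounds, since the defining inequalities for $r_m$ and $R_m$ are only available for pairs of tilings lying in a common cylinder; it is, in my view, the one point of the argument that genuinely needs care, and the main (if modest) obstacle.

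With this in hand, sub-multiplicativity of $(R_n)$ follows by writing $\varrho^{m+n}=\varrho^m\circ\varrho^n$. For $x,y\in\XX_i$ put $x'=\varrho^n(x)$ and $y'=\varrho^n(y)$; by the structural fact both lie in $\XX_{\suc^n(i)}$, so the $R_m$-bound applies to the pair $(x',y')$ and yields $D\bigl(\varrho^{m+n}(x),\varrho^{m+n}(y)\bigr)\leqslant R_m\,D\bigl(\varrho^n(x),\varrho^n(y)\bigr)\leqslant R_m R_n\,D(x,y)$. As this holds for every $i\in V$ and all $x,y\in\XX_i$, minimality of $R_{m+n}$ forces $R_{m+n}\leqslant R_m R_n$. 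The super-multiplicativity $r_{m+n}\geqslant r_m r_n$ is proved by the identical chaining with both inequalities reversed and maximality of $r_{m+n}$ invoked in place of minimality.

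Passing to logarithms turns $(\log R_n)$ into a subadditive and $(\log r_n)$ into a superadditive sequence, so the existence of the two limits and their identification with $\inf_{n}\log(R_n)/n$ and $\sup_{n}\log(r_n)/n$, respectively, is precisely Fekete's lemma; this already secures that $\lambda_{\eL}^{\max}$ and $\lambda_{\eL}^{\min}$ are well defined. For finiteness I would observe that sub-multiplicativity gives $R_n\leqslant R_1^{\,n}$, while the strict contraction of Proposition~\ref{prop:X0contract} (in the normalisation $D(\varrho(x),\varrho(y))\leqslant\rc\,D(x,y)$) yields $R_1\leqslant\rc<1$; hence $\lambda_{\eL}^{\max}\leqslant\log R_1<0$, and since $r_n\leqslant R_n\leqslant R_1^{\,n}$ the same bound controls $\lambda_{\eL}^{\min}$ from above. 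A finite lower bound for $\lambda_{\eL}^{\min}$ amounts to $r_1>0$, i.e.\ to a bi-Lipschitz lower estimate for $\varrho$ on the transversal, which one reads off from the overlap inflation; note, however, that Fekete's conclusion holds in $[-\infty,\infty)$ irrespective of this, so the claimed well-definedness does not depend on it.
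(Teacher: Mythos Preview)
Your proposal is correct and follows essentially the same route as the paper: chain the defining inequalities for $R_n$ (respectively $r_n$) to obtain sub- (respectively super-) multiplicativity, then invoke Fekete's lemma. The paper's proof is terser and simply asserts that $D\bigl(\varrho^{m+n}(x),\varrho^{m+n}(y)\bigr)\leqslant R_m R_n\,D(x,y)$ ``follows from the definition''; your additional structural observation that $\varrho(\XX_i)\subseteq\XX_{\suc(i)}$, so that pairs in a common cylinder remain in a common cylinder under iteration, is precisely the justification the paper leaves implicit, and your finiteness remarks at the end go a little beyond what the paper records.
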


\begin{proof}
  For $n,m \in \NN$, the estimate
  $D\bigl(\varrho^{n+m}(x),\varrho^{n+m}(y)\bigr) \leqslant R_n R_m
  D(x,y)$ follows from the definition. Since the same bound holds with
  $R_n R_m$ replaced by $R_{n+m}$, and the latter is minimal with this
  property, we conclude that $R_{n+m} \leqslant R_n R_m$. The estimate
  $r_{n+m} \geqslant r_n r_m$ follows analogously. The remaining
  statements are consequences of Fekete's lemma.
\end{proof}

Let us remark that, in the present setup, the incidence matrix of the
GIFS coincides with the \emph{inflation matrix} $M$ of $\varrho$,
where $M_{ij}$ is the number of occurrences of a tile of type $i$ in
a supertile of type $j$.  Since the inflation factor $\lambda$ means
that volumes in $\RR^d$ scale with $\lambda^d$ under the action of
$\varrho$, we get $\lambda^d$ as the leading eigenvalue of $M$.

\begin{theorem}\label{thm:ac}
  For a self-similar and non-periodic TTDS\/ $(\XX,\RR^d)$ with
  pure-point spectrum, generated by a primitive inflation\/ $\varrho$
  with inflation factor\/ $\lambda$, the\/ $\OSD$ satisfies the bounds
\[
  \frac{d\log(\lambda)}{-\lambda_\eL^{\mathrm{min}}}
  \, \leqslant \,  \OSD(\XX,\RR^d) \, \leqslant \,
  \frac{d\log(\lambda)}{-\lambda_\eL^{\mathrm{max}}} \ts ,
\]
where\/ $\lambda_\eL^{\mathrm{min}}$ and\/
$\lambda_\eL^{\mathrm{max}}$ are the minimal and maximal Lyapunov
exponents, respectively.
\end{theorem}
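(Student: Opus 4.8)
The plan is to apply Theorem~\ref{thm:EG-dimension-bounds} to the GIFS from Proposition~\ref{prop:sosc}, but to run the argument separately for each power $\varrho^n$ and then optimise over $n$ so that the Lyapunov exponents appear. All hypotheses are already available: by Proposition~\ref{prop:osc-upper-box}, $\OSD(\XX,\RR^d)$ equals the upper box counting dimension of $\bigl( [\XX^{}_{0}], D \bigr)$, where $[\XX^{}_{0}] = \bigcup_{i\in V}[\XX_i]$ is the invariant list of the GIFS, unique because $\cG$ is strongly connected, and satisfying the SOSC by Proposition~\ref{prop:sosc}. The incidence matrix of this GIFS is the inflation matrix $M$, whose spectral radius is its leading eigenvalue $\lambda^d$.

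Since all powers of $\varrho$ generate the same hull, for each fixed $n\in\NN$ the inflation $\varrho^n$ defines a GIFS on the same spaces $\{[\XX_i]\}_{i\in V}$, with the same invariant list $[\XX^{}_{0}]$, the same open sets $U_j$ witnessing the SOSC, and the same strongly connected underlying graph. Its incidence matrix is $M^n$, so that $\rho^{}_{\mathrm{sp}}(M^n) = \lambda^{dn}$. Its maps have the form $[x]\mapsto \varrho^n([x]) - t$, and since $D$ is translation invariant their contraction is governed exactly by the uniform bounds $r_n,R_n$, that is, $r_n\ts D(x,y)\leqslant D\bigl(\varrho^n(x),\varrho^n(y)\bigr)\leqslant R_n\ts D(x,y)$ for all $x,y\in\XX_i$ and $i\in V$. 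Applying Theorem~\ref{thm:EG-dimension-bounds} with $\rc=r_n$ and $r^{\ts\prime}_{\nts\mathrm{c}}=R_n$, and using that its bounds are independent of $i$ so that they pass from each $[\XX_i]$ to the finite union $[\XX^{}_{0}]$, I obtain for every $n$
\[
  \frac{dn\log(\lambda)}{-\log(r_n)} \, \leqslant \,
  \OSD(\XX,\RR^d) \, \leqslant \, \frac{dn\log(\lambda)}{-\log(R_n)} \ts .
\]

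To conclude, I would write the two sides as $g\bigl(\tfrac{1}{n}\log r_n\bigr)$ and $g\bigl(\tfrac{1}{n}\log R_n\bigr)$ with $g(t)=d\log(\lambda)/(-t)$, a function that is continuous and strictly increasing on $(-\infty,0)$. The preceding lemma gives $\sup_{n}\tfrac{1}{n}\log(r_n)=\lambda_\eL^{\mathrm{min}}$ and $\inf_{n}\tfrac{1}{n}\log(R_n)=\lambda_\eL^{\mathrm{max}}$, both negative. Since each displayed left-hand side is a lower bound and each right-hand side an upper bound for the single number $\OSD(\XX,\RR^d)$, taking the supremum of the former and the infimum of the latter over $n$, and using monotonicity and continuity of $g$, yields
\[
  \frac{d\log(\lambda)}{-\lambda_\eL^{\mathrm{min}}} \, = \,
  \sup_{n} g\bigl(\tfrac{1}{n}\log r_n\bigr) \, \leqslant \,
  \OSD(\XX,\RR^d) \, \leqslant \,
  \inf_{n} g\bigl(\tfrac{1}{n}\log R_n\bigr) \, = \,
  \frac{d\log(\lambda)}{-\lambda_\eL^{\mathrm{max}}} \ts ,
\]
which is the assertion.

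The only genuine care required is in the book-keeping that legitimises passing to powers: one must confirm that the $\varrho^n$-GIFS has incidence matrix $M^n$ (hence spectral radius $\lambda^{dn}$) and that its uniform contraction constants are precisely $r_n$ and $R_n$, while the SOSC and strong connectivity transfer verbatim because neither the hull nor the sets $U_j$ change. Beyond that, the argument is essentially an optimisation over the power $n$, and the only subtlety is sign-tracking: as $\lambda_\eL^{\mathrm{min}}$ and $\lambda_\eL^{\mathrm{max}}$ are negative with $\lambda_\eL^{\mathrm{min}}\leqslant\lambda_\eL^{\mathrm{max}}$, one has $-\lambda_\eL^{\mathrm{min}}\geqslant-\lambda_\eL^{\mathrm{max}}>0$, so that the two quotients are positive and correctly ordered. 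All the substantive work sits in the earlier results, and this final theorem is their assembly.
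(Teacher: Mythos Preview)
Your proof is correct and follows essentially the same approach as the paper: identify the $\OSD$ with the upper box counting dimension of $\bigl([\XX^{}_{0}],D\bigr)$ via Proposition~\ref{prop:osc-upper-box}, apply Theorem~\ref{thm:EG-dimension-bounds} to the GIFS associated with each power $\varrho^n$ (using Proposition~\ref{prop:sosc} for the SOSC), and then optimise over $n$. The only cosmetic difference is that the paper phrases the final step as ``performing the limit $n\to\infty$'' rather than taking a supremum and infimum, which amounts to the same thing by Fekete's lemma; your explicit use of the monotone function $g$ and the sign-tracking is a welcome clarification but not a different route.
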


\begin{proof}
  By Theorem~\ref{thm:osc-upper-box}, the $\OSD$ is given by the upper
  box counting dimension of the metric space
  $\bigl( [\XX^{}_{0}] , D \bigr)$, and since
  $[\XX^{}_{0}] = \bigcup_{i \in V} [\XX_i]$, this coincides with the
  maximal dimension of $[\XX_i]$, with $i \in V$. The list
  $\bigl\{[\XX_i]\bigr\}_{i \in V}$ is invariant under a (strongly
  connected) GIFS that satisfies the SOSC, due to
  Proposition~\ref{prop:sosc}. Hence, applying
  Theorem~\ref{thm:EG-dimension-bounds} to the GIFS induced by the
  inflation $\varrho^n$, we obtain
\begin{equation}\label{eq:dimfixpoint}
  \frac{n \log \bigl(\rho^{}_{\mathrm{sp}}(M)\bigr)}{-\log(r_n)}
  \, \leqslant \: \overline{\dim}^{}_B \bigl( [\XX_i] \bigr)
  \, \leqslant \, \frac{n \log
    \bigl(\rho^{}_{\mathrm{sp}}(M)\bigr)}{-\log(R_n)} \ts ,
\end{equation}
for each $n \in \NN$ and $i \in V$, where $\overline{\dim}^{}_B$
denotes the upper box counting dimension. Since $M$ is primitive with
leading eigenvalue $\lambda^d$, we have
$\rho^{}_{\mathrm{sp}}(M)=\lambda^d$. Performing the limit $n \to \infty$
on both bounds in \eqref{eq:dimfixpoint} yields the assertion.
\end{proof}

As discussed at the beginning of this section, the contraction rates
needed for these bounds are governed by the overlap inflation
$\varrho^{}_{\oo}$, or more precisely, by the part of
$\varrho^{}_{\oo}$ which acts on the discrepancy overlaps.  Let us
call this part the \emph{discrepancy inflation} $\varrho^{}_{\dc}$ and
denote its inflation matrix by $M_{\dc}$.  The substitution
$\varrho^{}_{\dc}$ produces discrepancies from discrepancies, omitting
any coincidences that would be produced by $\varrho^{}_{\oo}$. Due to
this omission, the leading eigenvalue
$\lambda^{}_{\dc} = \rho^{}_{\mathrm{sp}}(M_{\dc})$ must be smaller
than the leading eigenvalue $\lambda^d$ of $\varrho^{}_{\oo}$.

Let us now consider, for each discrepancy type $i$, the smallest set
$S_i$ of overlap types that is invariant under $\varrho^{}_{\dc}$ and
includes $i$. Let $\lambda_i$ be the leading eigenvalue of
$\varrho^{}_{\dc}$, confined to the invariant subset $S_i$ of
discrepancy types.  Given an overlap of type $i$, its geometric image
grows with the inflation factor $\lambda^d$ under the substitution
$\varrho$, whereas the total amount covered by overlaps scales
(eventually) with $\lambda_i$.  This yields the following result.

\begin{prop}\label{prop:lamdamin}
  Under the general assumptions of Theorem~$\ref{thm:ac}$, we have\/
\[
  \lambda_{\eL}^{\max} \, = \,
  \log(\lambda_{\dc}) - d\ts \log (\lambda)
  \quad \text{and} \quad
  \lambda_{\eL}^{\min} \, \geqslant \, \min_i \log(\lambda_i)
    - d\ts \log(\lambda) \ts .
\]
\end{prop}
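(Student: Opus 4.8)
The plan is to turn the pseudometric $D$ into a linear-algebraic object controlled by the discrepancy inflation matrix $M_{\dc}$, and then to extract both Lyapunov exponents from the spectral structure of this nonnegative but in general \emph{reducible} matrix; the reducibility is exactly what forces the minimal exponent to be a one-sided estimate rather than an identity.

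First I would encode a pair $x,y\in\XX^{}_0$ by the vector $\bs{\phi}(x,y)=\bigl(\phi_j(x,y)\bigr)_j$ of asymptotic frequencies of the discrepancy overlap types, where $\phi_j(x,y)$ is the number of type-$j$ discrepancy overlaps per unit volume. By the unique ergodicity of the product system noted in the remark after Definition~\ref{def:D}, each $\phi_j(x,y)$ exists as a genuine limit, and with $\bs{v}=(v_j)_j$ the (strictly positive) vector of overlap volumes one has $D(x,y)=\langle\bs{v},\bs{\phi}(x,y)\rangle$. The technical heart is the transformation rule: applying $\varrho$ scales every support linearly by $\lambda$, so after dissection one type-$j$ overlap is replaced by exactly $(M_{\dc})_{ij}$ discrepancy overlaps of type $i$ (the remaining mass becoming coincidence overlaps), while the number of supertiles per unit volume drops by $\lambda^{d}$. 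This gives $\bs{\phi}\bigl(\varrho^{n}(x),\varrho^{n}(y)\bigr)=\lambda^{-nd}M_{\dc}^{\,n}\,\bs{\phi}(x,y)$ and hence
\[
  D\bigl(\varrho^{n}(x),\varrho^{n}(y)\bigr)\,=\,
  \lambda^{-nd}\,\bigl\langle\bs{v},M_{\dc}^{\,n}\,\bs{\phi}(x,y)\bigr\rangle .
\]
Thus $R_n$ and $r_n$ are the supremum and infimum of $\lambda^{-nd}\langle\bs{v},M_{\dc}^{\,n}\bs{\phi}\rangle/\langle\bs{v},\bs{\phi}\rangle$ over the nonzero frequency vectors $\bs{\phi}\geqslant 0$ realised by actual pairs.

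Because $M_{\dc}\geqslant 0$ and $\bs{v}>0$, there is no cancellation, and the exponential growth rate of $\langle\bs{v},M_{\dc}^{\,n}\bs{\phi}\rangle$ equals $\max_{j\in\supp(\bs{\phi})}\lambda_j$, where $\lambda_j$ is precisely the leading eigenvalue of $M_{\dc}$ on $S_j$, that is, the largest spectral radius among the strongly connected components reachable from $j$. For the maximal exponent, the supremum over $\bs{\phi}$ selects $\max_j\lambda_j=\rho^{}_{\mathrm{sp}}(M_{\dc})=\lambda_{\dc}$: the upper estimate $\langle\bs{v},M_{\dc}^{\,n}\bs{\phi}\rangle\leqslant C\lambda_{\dc}^{\,n}\langle\bs{v},\bs{\phi}\rangle$ is immediate from the spectral radius, and the matching lower estimate holds because, by minimality, some genuine pair carries a discrepancy overlap feeding into the dominant strongly connected component. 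Hence $\log(R_n)/n\to\log(\lambda_{\dc})-d\log(\lambda)$, which is the asserted value of $\lambda_{\eL}^{\max}$.

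For the minimal exponent I would establish only the lower estimate, which is all that is claimed. Assuming $\min_i\lambda_i>0$ (otherwise the bound is vacuous), every $\lambda_j\geqslant\min_i\lambda_i>0$, so no overlap type is nilpotent and, using $\bs{v}>0$, for each type $j$ there is a constant $c_j>0$ with $\langle\bs{v},M_{\dc}^{\,n}\bs{e}_j\rangle\geqslant c_j(\min_i\lambda_i)^{n}$ for all $n$. Summing over $j$ and comparing with $\langle\bs{v},\bs{\phi}\rangle=\sum_j v_j\phi_j$ produces a single constant $c>0$, independent of the pair, with $\langle\bs{v},M_{\dc}^{\,n}\bs{\phi}\rangle\geqslant c(\min_i\lambda_i)^{n}\langle\bs{v},\bs{\phi}\rangle$. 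Therefore $r_n\geqslant c(\min_i\lambda_i)^{n}\lambda^{-nd}$, giving $\lambda_{\eL}^{\min}\geqslant\min_i\log(\lambda_i)-d\log(\lambda)$. The main obstacle, and the reason equality cannot be expected here, is exactly the reducibility of $M_{\dc}$: to promote this to an equality one would need a genuine pair whose discrepancies stay confined to a single minimal invariant set $S_i$ attaining $\min_i\lambda_i$, and such a configuration need not be geometrically realisable, since the discrepancies forced by actual return vectors may always feed into components of strictly larger spectral radius.
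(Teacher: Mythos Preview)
Your proof is correct and follows essentially the same strategy as the paper's: both encode $D(x,y)$ linearly via a frequency vector of discrepancy overlap types, observe that this vector transforms under iterated inflation by $\lambda^{-nd}M_{\dc}^{\,n}$, and then read off the Lyapunov exponents from the growth rates $\lambda_i$ of $M_{\dc}^{\,n}$ on the various invariant subsets. The paper works with the $\ell^1$-norm of a normalised frequency vector and absorbs the differing overlap volumes into a uniform multiplicative constant, while you keep the volume vector $\bs{v}$ explicit and use the pairing $\langle\bs{v},\cdot\rangle$; these are equivalent bookkeeping choices. For the equality $\lambda_{\eL}^{\max}=\log(\lambda_{\dc})-d\log(\lambda)$, the paper spells out the repetitivity step (a local pattern producing the dominant overlap type recurs with positive frequency), which is precisely what your phrase ``by minimality'' encapsulates.
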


\begin{proof}
  For $j \in V$ and $x,y \in \XX_j$ with $x \not \sim y$, let
  $v = v(x,y)$ be a vector containing the relative frequencies of
  discrepancy types (normalised to $\| v \|^{}_1 = 1$). Then, the
  ratio $D\bigl( \varrho^n(x),\varrho^n(y)\bigr)/D(x,y)$ is given by
  $ \lambda^{-dn} \| M_{\dc}^{n} v \|^{}_1, $ up to a multiplicative
  constant that is independent of $n \in \NN$ and the choices of $x,y$
  (accounting for the varying volumes of different discrepancy
  types). Due to the non-negativity of both $v$ and $M_\dc$, we
  observe that $\| M_{\dc}^{n} v \|^{}_1$ can be written as a convex
  combination of the values $\| M_{\dc}^{n} e^{}_i \|^{}_1$, where $e_i$
  denotes the Euclidean unit vector corresponding to type $i$. In
  particular,
\[
  \min_i \lambda^{-dn} \| M^n_{\dc} e^{}_i \|^{}_1
  \, \leqslant \, \lambda^{-dn} \| M^n_{\dc} v \|^{}_1
  \, \leqslant \, \max_i \lambda^{-dn} \| M^n_{\dc} e^{}_i \|^{}_1 \ts .
\]
By definition, these bounds (up to a constant) also hold for $r_n$ and
$R_n$.  Since $\lambda_i$ coincides with the growth rate of
$\| M_{\dc}^n e^{}_i \|^{}_1$, this implies
\[
  \min_i \log(\lambda_i) - d\ts \log(\lambda) \, \leqslant \,
  \lambda_{\eL}^{\min} \, \leqslant \, \lambda_{\eL}^{\max}
  \, \leqslant \, \max_i \log(\lambda_i) - d\ts \log(\lambda) \ts .
\]
Given a discrepancy of type $i$, let $x \in \XX^{}_{0}$ and $t$ be
such that $i$ appears in the overlap decomposition of $x$ and
$x+t$. This implies that there is a pattern $P$ in $x$ such that the
intersection of $P$ and $P+t$ contains an overlap of type $i$. Since
$x$ is repetitive, this pattern appears with positive frequency and we
conclude that type $i$ actually occurs with positive frequency in the
pair $( x,x+t )$. Choosing $i$ such that $\lambda_i = \lambda_{\dc}$
is maximal, we obtain that there is in fact a pair $(x,x+t)$ such that
$D\bigl(\varrho^n(x),\varrho^n(x+t)\bigr)$ decays with the slowest
possible contraction rate. That is, we have
$\lambda_{\eL}^{\max} = \log(\lambda_{\dc}) - d\ts \log(\lambda)$.
\end{proof}

\begin{coro}\label{coro:prim}
  If\/ $\varrho^{}_{\dc}$ is primitive, or if every invariant subsystem\/
  $S_i$ contains a leading eigenvalue\/ $\lambda_{\dc}$, then
\[
    \lambda_{\eL}^{\min} \, = \, \lambda_{\eL}^{\max} \, = \,
    \log(\lambda_{\dc}) - d\ts \log(\lambda) \ts ,
\]
  and hence
\[
\pushQED{\qed}    
    \OSD(\XX,\RR^d) \, = \, \frac{d\ts \log(\lambda)}
    {d \ts \log(\lambda) - \log(\lambda_{\dc})} \ts . 
\qedhere \popQED
\] 
\end{coro}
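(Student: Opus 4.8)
The plan is to deduce everything from a single algebraic observation: under either hypothesis, the confined leading eigenvalues satisfy $\lambda_i = \lambda_{\dc}$ for every discrepancy type $i$. Granting this, the asserted equalities of the Lyapunov exponents fall out of the two-sided chain already obtained in the proof of Proposition~\ref{prop:lamdamin}, and the value of the $\OSD$ is then immediate from Theorem~\ref{thm:ac}.

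I would begin by recording the relevant Perron--Frobenius facts for the non-negative matrix $M_{\dc}$. Its spectral radius $\lambda_{\dc} = \rho^{}_{\mathrm{sp}}(M_{\dc})$ equals the maximum of the spectral radii of the diagonal blocks indexed by the strongly connected components of the associated directed graph. Moreover, restricting $M_{\dc}$ to any forward-invariant set of types cannot increase the spectral radius, so that $\lambda_i \leqslant \lambda_{\dc}$ for all $i$, since $S_i$ is by construction such a subsystem.

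Next I would dispatch the two hypotheses. If $\varrho^{}_{\dc}$ is primitive, then $M_{\dc}$ is irreducible with a strongly connected graph; hence every type is reachable from every other type, so $S_i$ is the full set of discrepancy types and $\lambda_i = \lambda_{\dc}$ for every $i$. If instead each invariant subsystem $S_i$ already contains a component with leading eigenvalue $\lambda_{\dc}$, then monotonicity gives $\lambda_i \geqslant \lambda_{\dc}$, which combined with the upper bound above again forces $\lambda_i = \lambda_{\dc}$. In either case one obtains $\min_i \lambda_i = \max_i \lambda_i = \lambda_{\dc}$.

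Finally, I would substitute $\min_i \log(\lambda_i) = \max_i \log(\lambda_i) = \log(\lambda_{\dc})$ into the chain
\[
  \min_i \log(\lambda_i) - d\ts\log(\lambda)
  \,\leqslant\, \lambda_{\eL}^{\min}
  \,\leqslant\, \lambda_{\eL}^{\max}
  \,\leqslant\, \max_i \log(\lambda_i) - d\ts\log(\lambda)
\]
established in the proof of Proposition~\ref{prop:lamdamin}; this collapses the inequalities to the common value $\log(\lambda_{\dc}) - d\ts\log(\lambda)$ and yields the first assertion. Feeding this equality into the bounds of Theorem~\ref{thm:ac}, whose two sides now agree, produces the closed formula for $\OSD(\XX,\RR^d)$. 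The only genuinely delicate point is the graph-theoretic bookkeeping behind the phrase \emph{``every invariant subsystem $S_i$ contains a leading eigenvalue $\lambda_{\dc}$''} and the verification that the spectral radius is monotone under passage to invariant subsystems; once that is pinned down, the remainder is pure substitution.
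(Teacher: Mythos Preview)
Your proposal is correct and matches the paper's intended argument: the corollary is stated with a \qed and no separate proof, because it is meant to follow immediately from Proposition~\ref{prop:lamdamin} (in particular the chain of inequalities established in its proof) together with Theorem~\ref{thm:ac}, exactly as you spell out. Your Perron--Frobenius bookkeeping to justify $\lambda_i = \lambda_{\dc}$ under either hypothesis is the only missing link the paper leaves implicit, and you handle it correctly.
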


\begin{remark}\label{rem:twoinf}
  We should emphasise here again that there are two inflations
  involved, the inflation $\varrho$ acting on $[\XX^{}_{0}]$, and the
  discrepancy inflation $\varrho^{}_{\dc}$, which determines the
  contraction rates of $\varrho$.  To obtain these contraction rates,
  we only needed the combinatorial properties of $\varrho^{}_{\dc}$,
  but not a geometric realisation.  If the overlap algorithm is used
  to determine pure-pointedness, these combinatorial properties can be
  obtained as a byproduct.  \exend
\end{remark}

\begin{remark}\label{rem:bp2}
  There are examples where the balanced pair discrepancy inflation is
  primitive, whereas the overlap discrepancy inflation is not.  Using
  the latter, one would have to show that the fast-shrinking
  discrepancies in an invariant subsystem $S_i$ always occur together
  with slower-shrinking other discrepancies.  \exend
\end{remark}

\section{Product tilings}\label{sec:prod}

The simplest way to obtain higher-dimensional tilings is to build
direct products of lower-dimensional ones. Let us assume we have two
dynamical systems $(\XX,\mathbb{R}^{d_\XX})$ and
$(\YY,\mathbb{R}^{d_\YY})$, each with well-defined $\OSD$.  We are
then interested in the $\OSD$ of the product system
$(\XX \times \YY, \mathbb{R}^{d_\XX+d_\YY})$.  We equip the product
space $\XX \times \YY$ with the sup metric of the individual
metrics. In the case of tiling spaces, this is not the same as the
natural tiling metric on $\XX \times \YY$ as defined in
Section~\ref{sec:prelim}, but is equivalent to it and thus does not
affect the $\OSD$. Also, we naturally compute the $\OSD$ with respect
to the product of the two F{\o}lner sequences that are used for the
individual dynamical systems.  Product systems had already been
considered in \cite{FGJ16,FGJK23}, though with equal groups acting on
the factors and with the diagonal action on the product. As we shall
see, this difference is not essential for our purpose.

Let us first observe that the $\OSD$ can equivalently be defined with
\emph{spanning} instead of separation numbers, as already noted in
\cite{FGJK23}.  We specify the $(\delta,\nu)$-\emph{span} of a set
$S\subset\XX$ as
\[
  \{x\in\XX: \exists y \in S \text{ with }
  D^{}_{\nts\delta}(x,y)\leqslant \nu \} \ts ,
\]
and we say that $S$ is $(\delta,\nu)$-spanning if its
$(\delta,\nu)$-span is the entire space $\XX$. We then set the
\emph{spanning number} $\textrm{Span}(\XX,\delta,\nu)$ to be the
minimal cardinality of a $(\delta,\nu)$-spanning subset
$S\subseteq\XX$.  The following result was shown for $\ZZ$-actions in
\cite{FGJ16}, but the proof carries over verbatim to the case of more
general group actions; compare also the discussion in \cite{FGJK23}.

\begin{lemma}\label{lemma:sepspan}
  The separation numbers and spanning numbers satisfy the relations
\[
\pushQED{\qed}    
     \Span(\XX,\delta,\nu) \, \leqslant \, 
     \Sep(\XX,\delta,\nu) \, \leqslant \,
     \Span(\XX, \delta/2,\nu/2) .
\qedhere \popQED
\] 
\end{lemma}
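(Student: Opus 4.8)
The plan is to adapt the classical Bowen-type comparison of spanning and separating sets to the present setting, the only twist being that $D^{}_{\nts\delta}$ is not a genuine pseudometric on all of $\XX$, so the ordinary triangle inequality must be replaced by the modified relation of Lemma~\ref{lemma:modtrianglerel}. Each of the two inequalities is then a short counting argument.

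For the left inequality $\Span(\XX,\delta,\nu)\leqslant\Sep(\XX,\delta,\nu)$, I would take a $(\delta,\nu)$-separated set $S$ of maximal cardinality $\Sep(\XX,\delta,\nu)$, which is finite by Theorem~\ref{thm:meaneq}. By maximality, no point can be adjoined to $S$ without destroying separation, so for every $x\in\XX$ there is some $y\in S$ with $D^{}_{\nts\delta}(x,y)<\nu$ (for $x\in S$ one takes $y=x$, using $D^{}_{\nts\delta}(x,x)=0$). In particular $D^{}_{\nts\delta}(x,y)\leqslant\nu$, so $x$ lies in the $(\delta,\nu)$-span of $S$. Hence $S$ is itself $(\delta,\nu)$-spanning, exhibiting a spanning set of cardinality $\Sep(\XX,\delta,\nu)$ and giving the claimed bound.

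For the right inequality $\Sep(\XX,\delta,\nu)\leqslant\Span(\XX,\delta/2,\nu/2)$, I would fix an arbitrary $(\delta,\nu)$-separated set $S$ and an arbitrary $(\delta/2,\nu/2)$-spanning set $T$, and build an injection $\phi\colon S\to T$. For each $x\in S$, the spanning property supplies some $\phi(x)\in T$ with $D^{}_{\nts\delta/2}(x,\phi(x))\leqslant\nu/2$. If $\phi(x)=\phi(x')=t$ for distinct $x,x'\in S$, then Lemma~\ref{lemma:modtrianglerel}, applied with the pair of scales $\delta/2$ and $2\cdot(\delta/2)=\delta$, yields
\[
  D^{}_{\nts\delta}(x,x') \,\leqslant\, D^{}_{\nts\delta/2}(x,t) + D^{}_{\nts\delta/2}(t,x')
  \,\leqslant\, \tfrac{\nu}{2}+\tfrac{\nu}{2} \,=\, \nu \ts ,
\]
contradicting the $(\delta,\nu)$-separation of $S$. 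Thus $\phi$ is injective and $|S|\leqslant|T|$; taking $S$ of maximal cardinality and $T$ of minimal cardinality gives the bound.

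The main point — and the reason the factors of $2$ appear in both $\delta$ and $\nu$ — is precisely the replacement of the triangle inequality by Lemma~\ref{lemma:modtrianglerel}. Since that lemma only bounds $D^{}_{\nts 2\delta'}$ by a sum of two $D^{}_{\nts\delta'}$ terms, the halving of $\delta$ (to match the doubling built into the lemma) and of $\nu$ (to split the separation threshold evenly between the two legs through the common point $t$) is exactly what closes the injectivity argument. Everything else is the standard counting scheme, which is why the proof given for $\ZZ$-actions in \cite{FGJ16} transfers here without essential change.
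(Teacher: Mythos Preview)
The paper does not supply its own proof of this lemma; it states that the $\ZZ$-action argument from \cite{FGJ16} carries over verbatim and leaves it at that. Your write-up is precisely that standard Bowen-type argument, with Lemma~\ref{lemma:modtrianglerel} standing in for the triangle inequality, so the approach matches what the paper intends.

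There is one boundary-case slip in your injectivity step for the right inequality. From $\phi(x)=\phi(x')=t$ you correctly deduce $D^{}_{\nts\delta}(x,x')\leqslant\nu$ via Lemma~\ref{lemma:modtrianglerel}, but $(\delta,\nu)$-separation only asserts $D^{}_{\nts\delta}(x,x')\geqslant\nu$; the two statements are compatible when equality holds, so no contradiction is obtained and injectivity of $\phi$ is not established as written. With the paper's non-strict conventions on both separation ($\geqslant\nu$) and spanning ($\leqslant\nu$), this edge case is not excluded by the pigeonhole. The gap is inconsequential for the only use made of the lemma --- replacing $\Sep$ by $\Span$ in the definition of the $\OSD$, where the $\limsup$ as $\nu\searrow 0$ together with monotonicity of $\Sep$ and $\Span$ in $\nu$ washes out boundary effects --- and it disappears if one of the two defining inequalities is taken strict; but the ``contradiction'' you claim does not literally close.
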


This lemma shows that the $\OSD$ is unchanged by replacing the
separation numbers in its definition by the spanning numbers. Given
two product tilings $x^{}_1 \nts\times y^{}_1$ and
$x^{}_2 \times y^{}_2$, with $x^{}_i \in\XX$ and $y^{}_i \in\YY$, the
definition of the sup metric implies that
\[
  \vD^{}_{\delta} (x^{}_1 \times y^{}_1, x^{}_2 \times y^{}_2) \, = \,
  \bigl( \vD^{}_{\delta} (x^{}_1, x^{}_2) \times \RR^{d_\YY}\bigr)
  \cup \bigl( \RR^{d_\XX}\nts \times\nts \vD^{}_{\delta}
  (y^{}_1 , y^{}_2) \bigr).
\]
Taking upper densities on both sides gives
\begin{equation}\label{eq:prodmetric}
  \max \bigl\{ D^{}_{\nts\delta} (x^{}_1 , x^{}_2), 
       D^{}_{\nts\delta} (y^{}_1 , y^{}_2) \bigr\}  \, \leqslant \,  
       D^{}_{\nts\delta} ( x^{}_1 \nts {\times\ts} y^{}_1,
             x^{}_2 {\ts\times\ts} y^{}_2 )
      \, \leqslant \,  D^{}_{\nts\delta} (x^{}_1 , x^{}_2) + 
         D^{}_{\nts\delta} (y^{}_1 , y^{}_2 ) \ts  . 
\end{equation}
This leads to the following analogue of the product rules derived in
\cite[Prop.~1.3]{FGJ16}.

\begin{theorem}\label{thm:product-system}
  Let\/ $(\XX {\ts\times} \YY, \RR^{d_\XX+d_\YY})$ be a direct product
  dynamical system.  Assume that both factors have a well-defined,
  finite\/ $\OSD$ and that the limit in the definition \eqref{eq:ac}
  of the\/ $\OSD$ exists. Then, the\/ $\OSD$ of the product system is
\[
    \OSD( \XX \ts{\ts\times}\ts \YY, \RR^{d_\XX} {\times\ts} \RR^{d_\YY} ) 
    \, = \, \OSD(\XX,\RR^{d_\XX}) + \OSD( \YY, \RR^{d_\YY}) \ts .
\]
\end{theorem}

\begin{proof}
  We will construct suitable separated sets and spanning sets for
  $\XX {\ts\times} \YY$. Suppose $S^{}_1$ and $S^{}_2$ are
  $(\delta,\nu)$-separated subsets of $\XX$ and $\YY$, respectively.
  By \eqref{eq:prodmetric}, $S^{}_1 {\times\ts} S^{}_2$ is then
  $(\delta,\nu)$-separated in $\XX {\ts\times\ts} \YY$, and hence
\[
     \Sep(\XX{\ts\times}\ts\YY,\delta,\nu) \, \geqslant \,
     \Sep(\XX,\delta,\nu) \nts\cdot \Sep(\YY,\delta,\nu) \ts .
\]
Conversely, let $S^{\ts\prime}_1$ and $S^{\ts\prime}_2$ be
$(\delta,\nu/2)$-spanning subsets of $\XX$ and $\YY$, respectively.
By Eq.~\eqref{eq:prodmetric},
$S^{\ts\prime}_1 {\ts\times\ts} S^{\ts\prime}_2$ then is
$(\delta,\nu)$-spanning in $\XX {\ts\times} \YY$, and hence
\[
  \Span(\XX{\ts\times}\YY,\delta,\nu) \, \leqslant \,
  \Span(\XX,\delta,\nu/2) \nts\cdot \Span(\YY,\delta,\nu/2) \ts .
\]
These two inequalities, together with Lemma~\ref{lemma:sepspan}, prove
the claim.
\end{proof}  

\begin{remark}
  In the context of tiling dynamical systems generated by a primitive
  inflation rule, the requirement about the existence of the limit in
  Eq.~\eqref{eq:ac} can be replaced by the corresponding condition on
  the transversal. This can be translated to the requirement that the
  upper and lower box counting dimensions of
  $\bigl( [\XX^{}_{0},D] \bigr)$ coincide. The upper and lower bounds
  established via the GIFS approach in the last section hold for both
  kinds of dimensions. Therefore, as soon as these bounds agree, the
  condition is satisfied. This applies in particular to the situation
  in Corollary~\ref{coro:prim}, where a closed form expression for the
  $\OSD$ is available.
  \exend
\end{remark}

\section{Window boundaries}\label{sec:win}

It is well known that primitive inflation tilings with pure-point
spectrum are also cut-and-project tilings, in the sense that the
control point sets are regular model sets \cite{TAO}. In the
case of inflation tilings, however, we have to distinguish between
control points belonging to different tile types, each of which
will have its own window, $W_i$.

It had been observed \cite{FFIW06,Sin07} that the \emph{boundaries} of
the windows $W_i$ for the different tile types form the fixed point of
a \emph{boundary inflation} $\varrho^{}_{\bd}$, a GIFS that is
combinatorially isomorphic to the (transposed) discrepancy inflation
$\varrho^{}_{\dc}$. If we denote, in slight abuse of notation, by
$\overline{\dim}_B(\partial W)$ the maximum of the upper box
dimensions $\overline{\dim}_B(\partial W_i)$, we thus obtain, in
analogy to Eq.~\eqref{eq:dimfixpoint}, an upper bound
\begin{equation}\label{eq:dimfixpointbd}
    \overline{\dim}_B(\partial W) \, \leqslant \, 
    \frac{\log \bigl(\rho^{}_{\mathrm{sp}}(M_{\bd})\bigr)}
    {-\log(\rc)} \ts ,
\end{equation}
where $\rho^{}_{\mathrm{sp}}(M_{\bd})=\lambda_{\dc}$ is the leading
eigenvalue of the inflation matrix $M_{\bd}$ for the boundary
inflation $\varrho^{}_{\bd}$ and $\rc$ is the slowest
contraction rate of the inflation in internal space.

Conversely, \cite[Thm.~1.4]{FGJK23} gives an upper bound on the $\OSD$
in terms of $\overline{\dim}_B(\partial W)$,
\begin{equation}\label{eq:bddw}
  \OSD(\XX,\RR^d) \, \leqslant \, \frac{d^{}_{\mathrm{int}}}
  {d^{}_{\mathrm{int}} \nts - \ts \overline{\dim}_B(\partial W)} \ts ,
\end{equation}
where $d^{}_{\mathrm{int}}$ is the upper box dimension of internal
space.  Assuming that the $\OSD$ exists, this can be converted into a
lower bound
\begin{equation}\label{eq:lbddw}
  \overline{\dim}_B(\partial W) \, \geqslant \, d^{}_{\mathrm{int}}
  \cdot \frac{ \OSD(\XX,\RR^d) - 1 }{ \OSD(\XX,\RR^d)} \ts .
\end{equation}

In what follows, we only consider the case of a Euclidean internal
space, which occurs if the inflation factor $\lambda$ is a unit.
Assume further that the contraction in internal space is isotropic,
which is the case when $\lambda$ is quadratic (so $\rc=\lambda^{-1}$)
or when $\lambda$ is ternary with a complex conjugate pair of Galois
conjugates (hence $\rc=\lambda^{-\frac12}$).  Then, the upper bound
simplifies to
\begin{equation}\label{eq:dimfixpointbd2}
  \overline{\dim}_B(\partial W) \, \leqslant \,
  \frac{d^{}_{\mathrm{int}}}{d} \frac{\log (\lambda_{\dc} )}
      {\log (\lambda )} \ts ,
\end{equation}
where the dimension $d^{}_{\mathrm{int}}$ of internal space is either
$d$ or $2d$. With this and the lower bound \eqref{eq:lbddw}, one can
show the following result.

\begin{prop}\label{prop:winbd}
  Assume that a primitive inflation tiling with pure-point spectrum
  has a Euclidean internal space with isotropic contraction under the
  inflation, and that the discrepancy inflation\/ $\varrho^{}_{\dc}$ is
  primitive (or that every invariant subsystem has the same leading
  eigenvalue).  Then, the Hausdorff dimension of the window boundary
  satisfies
\[
  \overline{\dim}(\partial W) \, = \,
  \frac{d_{\mathrm{int}}}{d} \frac{\log(\lambda_{\dc})}
    {\log(\lambda)} \quad \textrm{and} \quad
    \OSD(\XX,\RR^d) \, = \, \frac{d_{\mathrm{int}}}
    {d_{\mathrm{int}}\nts - \ts \overline{\dim}_B(\partial W)} \ts ,
\]
  where\/ $d_{\mathrm{int}}$ is the dimension of internal space.
\end{prop}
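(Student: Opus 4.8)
The plan is to pin down $\overline{\dim}_B(\partial W)$ by a two-sided estimate and then read off the second identity of the proposition as an algebraic consistency check. The upper bound is already available: under the standing assumption of a Euclidean internal space with isotropic contraction, Eq.~\eqref{eq:dimfixpointbd2} gives
\[
  \overline{\dim}_B(\partial W) \,\leqslant\,
  \frac{d_{\mathrm{int}}}{d}\,\frac{\log(\lambda_{\dc})}{\log(\lambda)} \ts .
\]
For the matching lower bound I would first invoke Corollary~\ref{coro:prim}, whose primitivity hypothesis (or the equal-leading-eigenvalue condition on the invariant subsystems) is exactly what the proposition assumes. This yields the closed form $\OSD(\XX,\RR^d) = d\log(\lambda)/\bigl(d\log(\lambda)-\log(\lambda_{\dc})\bigr)$, and in particular guarantees that the $\OSD$ exists, so that the lower bound \eqref{eq:lbddw} is applicable.

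The core of the argument is then a short computation. Inserting the closed form into $(\OSD-1)/\OSD = 1 - 1/\OSD$ produces $\log(\lambda_{\dc})/\bigl(d\log(\lambda)\bigr)$, so that \eqref{eq:lbddw} reads
\[
  \overline{\dim}_B(\partial W) \,\geqslant\,
  d_{\mathrm{int}}\cdot\frac{\log(\lambda_{\dc})}{d\log(\lambda)}
  \,=\, \frac{d_{\mathrm{int}}}{d}\,\frac{\log(\lambda_{\dc})}{\log(\lambda)} \ts .
\]
This matches the upper bound exactly, so the two estimates close up and fix $\overline{\dim}_B(\partial W)$ at the claimed value. The second identity of the proposition then follows by substituting this value into $d_{\mathrm{int}}/\bigl(d_{\mathrm{int}}-\overline{\dim}_B(\partial W)\bigr)$ and checking that it reproduces the closed form of $\OSD(\XX,\RR^d)$; equivalently, the inequality \eqref{eq:bddw} is thereby shown to be saturated.

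Finally, to justify reading $\overline{\dim}(\partial W)$ as a genuine Hausdorff dimension and not merely the upper box dimension, I would apply Theorem~\ref{thm:EG-dimension-bounds} directly to the boundary inflation GIFS $\varrho^{}_{\bd}$, which is combinatorially isomorphic to $\varrho^{}_{\dc}$ and has $\rho^{}_{\mathrm{sp}}(M_{\bd})=\lambda_{\dc}$. Isotropy of the internal contraction means that all edge maps share a single contraction rate $\rc$ (namely $\lambda^{-1}$ or $\lambda^{-1/2}$), so the lower and upper bounds of Theorem~\ref{thm:EG-dimension-bounds} coincide; taking $\dim$ to be Hausdorff dimension then forces $\dim_H(\partial W)$ to equal $\log(\lambda_{\dc})/(-\log(\rc)) = (d_{\mathrm{int}}/d)\log(\lambda_{\dc})/\log(\lambda)$, the same value, and the universal inequality $\dim_H\leqslant\overline{\dim}_B$ makes all three dimensions agree. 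The main obstacle here is the one hypothesis of Theorem~\ref{thm:EG-dimension-bounds} that cannot be imported verbatim, namely the SOSC for the boundary GIFS; I would establish it by transporting the open sets used in Proposition~\ref{prop:sosc} through the combinatorial isomorphism between $\varrho^{}_{\bd}$ and $\varrho^{}_{\dc}$, using that for a regular model set the windows $W_i$ tile internal space up to a null set, so that the boundary pieces inherit the disjointness and non-triviality required by the separation condition.
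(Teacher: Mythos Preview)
Your first three paragraphs match the paper's proof essentially line for line: upper bound from \eqref{eq:dimfixpointbd2}, lower bound from \eqref{eq:lbddw} combined with the closed form of Corollary~\ref{coro:prim}, and the second identity by direct substitution. That part is correct and is exactly what the paper does.

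The final paragraph, however, has a genuine gap. The ``combinatorial isomorphism'' between $\varrho^{}_{\bd}$ and $\varrho^{}_{\dc}$ is only an identification of incidence graphs (so $M_{\bd}=M_{\dc}$ and $\rho^{}_{\mathrm{sp}}(M_{\bd})=\lambda_{\dc}$); it does \emph{not} identify the underlying metric spaces or the contractions. The open sets $U_j$ of Proposition~\ref{prop:sosc} live in $\bigl([\XX_j],D\bigr)$, a quotient of the tiling space under the discrepancy pseudometric, whereas the boundary GIFS acts on pieces of $\partial W_i$ inside Euclidean internal space. There is no map along which you can ``transport'' the $U_j$ to open subsets of the boundary pieces, and the tiling-up-to-null-set property of the windows $W_i$ concerns the \emph{full} windows, not their boundaries, so it does not furnish the required disjointness either. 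In short, the SOSC for the boundary GIFS is a separate geometric verification that does not follow from Proposition~\ref{prop:sosc}. The paper itself flags exactly this point in the remark following the proof: rather than SOSC, one should check the weak separation property of Das--Edgar for $\varrho^{}_{\bd}$, which does go through in the isotropic Euclidean setting and then yields the Hausdorff-dimension lower bound via Theorem~\ref{thm:EG-dimension-bounds}. If you want the Hausdorff statement, that is the route to take; alternatively, note that the paper's own proof in fact only establishes the formula for $\overline{\dim}_B(\partial W)$ and leaves the Hausdorff upgrade to the remark.
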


\begin{proof}
  The upper bound on the window boundary dimension is given by
  \eqref{eq:dimfixpointbd2}. The corresponding lower bound follows by
  combining \eqref{eq:lbddw} with Corollary~\ref{coro:prim}, which
  yields
\[
  \overline{\dim}_B(\partial W) \, \geqslant \,
  \frac{d^{}_{\mathrm{int}} \bigl( \OSD(\XX,\RR^d) -  1 \bigr)}
  {\OSD(\XX,\RR^d)} \, = \, \frac{d^{}_{\mathrm{int}}}{d}
  \frac{\log(\lambda_\dc)}{\log(\lambda)} \ts .
\]
Again using the expression for the $\OSD$ from
Corollary~\ref{coro:prim}, a straight-forward calculation yields the
claimed relation between the $\OSD$ and the window boundary dimension.
\end{proof}

\begin{remark}
  Instead of using the lower bound \eqref{eq:lbddw}, we can also use
  the lower bound of an analogue of Theorem~\ref{thm:ac}, provided we
  can replace the SOSC by some other sufficient separation condition
  satisfied by the boundary inflation.  For the case at hand, the
  \emph{weak separation property} (WSP) (see \cite{DE05}) can be shown
  to hold, and is particularly simple to check.  In the context of a
  GIFS, its sufficiency was proved by Das and Edgar \cite{DE05}. Note,
  however, that also the WSP crucially depends on the contraction of
  internal space being isotropic. \exend
\end{remark}

\section{Examples}\label{sec:ex}

The main point of our above derivation is that, for the class of
primitive inflation tilings with pure-point spectrum, one obtains a
useful topological invariant that is also practically computable, as
we now demonstrate with several examples.

\subsection{One-dimensional tilings}

In \cite{FFIW06}, the window boundary dimensions had been determined
for a considerable number of irreducible, unimodular Pisot (or PV)
inflation tilings.  For this purpose, the leading eigenvalue of the
reduced boundary inflation matrix $M_{\bd}$ was determined. As this
eigenvalue coincides with our $\lambda_{\dc}$, these results can be
used to compute also the $\OSD$ of these tilings. We have confirmed
the results of \cite{FFIW06}.  In particular, for each of these
examples, the balanced pair discrepancy inflation turns out to be
primitive, so that there is only one Lyapunov exponent, and the
formulae of Corollary~\ref{coro:prim} are immediately applicable. We
review here some of those examples, together with further ones.

\begin{example}\label{ex:fibo}
  We start by picking up again the Fibonacci tiling of
  Example~\ref{ex:fibo0}. We have seen that, under inflation, each
  discrepancy $C$ produces one copy of itself (plus a coincidence), so
  that the inflation matrix of the discrepancy inflation is a single
  number, $\lambda_{\dc}=1$.  Alternatively, we could consider the
  inflation matrix of the discrepancy overlap inflation, which reads
  \[ 
      M_{\dc} \, = \, \left( \begin{matrix}
        0 & 1 & 0 \\ 0 & 0 & 1 \\ 0 & 0 & 1
      \end{matrix} \right)
  \]
  and has leading eigenvalue $\lambda_{\dc}=1$. The $\OSD$ thus has
  the minimal possible value, $1$, and the window boundary has
  dimension~$0$, as it must be for the boundary of an interval.
  Consistently, this is the same value as for the corresponding
  symbolic model, which (up to scale) defines a TDS that is
  topologically conjugate. Further, it is known that every Sturmian
  subshift has value $1$ for its $\OSD$; compare
  \cite[Prop.~1.4]{FGJ16}.  \exend
\end{example}

\begin{example}\label{ex:fibotwist}
  The inflation rule from \cite[Ex.~4.1]{FFIW06}, which was also
  studied in \cite{GLJJ}, is related to the square of the Fibonacci
  inflation, but with a reshuffled order of the tiles within the
  supertiles,
\[
   \widetilde{\varrho} : \quad    
  a \rightarrow aab, \quad b \rightarrow ba,
\]
with inflation factor $\lambda=\phi^2$ and the same tile lengths as
for the Fibonacci inflation. The leading discrepancy eigenvalue was
determined as $\lambda_{\dc}=1+\sqrt{2}$ in \cite{FFIW06}, so that we
get
\[
\begin{split}
  \OSD(\XX,\RR) \, & = \, \frac{2\log(\phi)}
    {2\log(\phi) - \log(1+\sqrt{2}\,)}
    \, \approx \, 11.874{\ts\ts}434 
    \quad \text{and} \\[2mm]
    \overline{\dim}_B(\partial W) \, &  = \,
    \frac{\log(1+\sqrt{2}\,)}{2\log(\phi)}
    \, \approx \, 0.915{\ts\ts}785 \ts .
\end{split}
\]
This is already quite a high complexity, with a window boundary
dimension close to the upper limit of $1$.  \exend
\end{example}

For the above two examples, where the internal space is
one-dimensional, the contraction in internal space is always
isotropic, with a contraction rate $1/\lambda$, due to $\lambda$ being
a unit.  In the ternary case, the internal space contraction is only
isotropic if the two Galois conjugates of $\lambda$ form a complex
conjugate pair. If $\lambda$ is a unit, the contraction rate then is
$1/\sqrt{\lambda}$.  The following three inflation rules are of this
kind.

\begin{example}\label{ex:tribonacci}
  One of the simpler ternary unimodular Pisot inflations is the Rauzy
  or Tribonacci inflation from \cite[Ex.~4.2]{FFIW06}, see also
  \cite[Secs.~7.4 and 7.5]{PF} or \cite{BG20},
\[ 
    a \rightarrow ab, \quad b \rightarrow ac, \quad c \rightarrow a. 
\]
Here, $\lambda \approx 1.839{\ts\ts}287$ is the largest root of
$x^3-x^2-x-1$, whereas $\lambda_{\dc} \approx 1.395{\ts\ts}337$ is the
largest root of $x^4-2x-1$. Entering this into our formulae yields
\[
      \OSD(\XX,\RR) \, = \, \frac{\log(\lambda)}{\log(\lambda) -
      \log(\lambda_{\dc})} \, \approx \, 2.205{\ts\ts}957 
      \quad \text{and} \quad \overline{\dim}_B(\partial W) \, = \,
      \frac{2\log(\lambda_{\dc})}{\log(\lambda)} 
      \, \approx \, 1.093{\ts\ts}364 \ts .
\]
  Here, the window boundary dimension is small, and the complexity low.
 
  The order of the tiles within the supertiles can also be reshuffled
  for the Tribonacci inflation \cite[Ex.~4.3]{FFIW06},
\[ 
      a \rightarrow ab, \quad b \rightarrow ca, \quad c \rightarrow a. 
\]
  Here, we still have $\lambda \approx 1.839{\ts\ts}287$ as
  the largest root of $x^3-x^2-x-1$, but now,
  $\lambda_{\dc} \approx 1.726{\ts\ts}29$ is the largest root of
  $x^6-x^5-x^4-x^2+x-1$. With these values, we obtain
\[
    \OSD(\XX,\RR) \, \approx \, 9.611{\ts\ts}125  \quad \text{and} 
    \quad \overline{\dim}_B(\partial W) \, \approx \, 1.791{\ts\ts}90 \ts .
\]
  The $\OSD$ and the window boundary dimension are considerably 
  higher, which matches nicely with the significantly more complicated
  structure of the windows \cite{PF,BG20}.     \exend
\end{example}

\begin{example}\label{ex:plastic}
  While the reshuffled Tribonacci tiling already has a rather high
  complexity, there are simple inflation tilings with even higher
  complexity. Such an example is the inflation
\[
    a \rightarrow bc, \quad b \rightarrow a, \quad c \rightarrow b \ts ,
\]
whose inflation factor is the smallest PV number, also known as the
\textit{plastic number}; compare \cite{TAO}. It is the largest root of
the polynomial $x^3-x-1$, with numerical value
$\lambda \approx 1.324{\ts\ts}72$. The corresponding discrepancy
inflation factor $\lambda_{\dc} \approx 1.314{\ts\ts}78$ is the
largest root of the polynomial $x^{13}-x^{12}-x^{10}+x^9-2x^4+x^3-1$.
With these values, we get
\[
  \OSD(\XX,\RR) \, \approx \, 37.335{\ts\ts}35 \quad \text{and} 
  \quad \nts\overline{\dim}_B(\partial W) \approx 1.946{\ts\ts}43 \ts .
\]
Indeed, inspecting the windows, compare \cite{TAO}, it is visually
hardly plausible that they are even topologically regular, as they
must, as a consequence of their dynamical origin from a window IFS.
\exend
\end{example}

\begin{example}\label{ex:ternary}
  Next, we consider a ternary unimodular Pisot inflation with three
  distinct real (and positive) eigenvalues,
\[
  a \rightarrow cab, \quad b \rightarrow ba,
  \quad c \rightarrow a.
\]
Here, $\lambda \approx 2.246{\ts\ts}980$ is the largest root of
$x^3-2x^2-x+1$, whereas $\lambda_{\dc} \approx 1.801{\ts\ts}938$ is
the largest root of $x^3-x^2-2x+1$. Interestingly, $\lambda_{\dc}$
happens to be equal to the inverse of the smallest Galois conjugate of
$\lambda$. The inverse of the other Galois conjugate is
$\lambda/\lambda_{\dc} \approx 1.246{\ts\ts}980$. For the $\OSD$, for
which the different moduli of the Galois conjugates play no role, we
now get
\[
    \OSD(\XX,\RR) \, \approx \, 3.667{\ts\ts}86 \ts .
\]
The upper bound on the window boundary dimension obtained from
\eqref{eq:dimfixpointbd} is larger than $2$, and therefore not
useful. However, a better bound adapted to the case of non-isotropic
scaling is given in \cite{FFIW06}. Combining this with the lower bound
from \eqref{eq:bddw}, we obtain the approximate bounds
\[
  1.454{\ts\ts}723 \, \leqslant \, \overline{\dim}_{B} (\partial W)
  \, \leqslant \, 1.625{\ts\ts}168 \ts ,
\]
which are reasonably sharp. In the case of non-isotropic scaling,
there currently seem to be no good methods to determine the boundary
dimension exactly.  \exend
\end{example}

\begin{example}\label{ex:constlen}
  Finally, we consider a constant-length inflation rule, which is
  necessarily non-unimodular,
\[
    a \rightarrow abab, \quad b \rightarrow caab,
    \quad c \rightarrow bcac \ts .
\]
It is rather easy to see that the discrepancy inflation matrix has the
characteristic polynomial $x^3-3x^2-x+4$, which is irreducible, with
largest root $\lambda_{\dc}\approx 2.8608$. The $\OSD$ thus becomes
$\OSD(\XX,\RR)\approx 4.1358$.  
\exend
\end{example}

\subsection{Two-dimensional tilings}

For standard cut-and-project tilings with a two-dimensional Euclidean
internal space and polygonal windows, \eqref{eq:bddw} gives
$d^{}_{\mathrm{int}}$ as an upper bound on the $\OSD$. For a Penrose
tiling with minimal embedding \cite[Ex.~7.11 and Rem.~7.8]{TAO}, this
results in a maximal value of $2$ for the $\OSD$.  We have verified
that this upper bound is indeed the exact value, and we expect an
analogous result to hold for other examples with polygonal windows
(and analogously in higher dimensions), provided a minimal embedding
is used; see \cite{Crelle} or \cite[Sec.~5.2]{Nicu} for methods to
determine such an embedding from intrisic tiling data.

Another simple way to construct two-dimensional
inflation tilings uses direct products of one-dimensional ones.
These are covered by Theorem~\ref{thm:product-system}.

\begin{example}\label{ex:fibofibotw}
  Let us consider the direct product $\XX \times \YY$ of the hulls the
  squared Fibonacci inflation $\varrho_{_\mathrm{F}}^2$ from
  Example~\ref{ex:fibo} and the reshuffled Fibonacci inflation
  $\widetilde{\varrho}$ from Example~\ref{ex:fibotwist}. The linear
  scaling factor then is $\lambda=\phi^2$ in both directions.  By
  Theorem~\ref{thm:product-system}, we obtain
\[
  \OSD(\XX {\ts\times\ts} \YY,\RR^2)
  \, = \, \OSD(\XX,\RR) + \OSD(\YY,\RR)
  \, \approx \, 12.874{\ts\ts}434 \, ,
\]
even though we have two different Lyapunov exponents, neither of which
gives the above value. In fact, discrepancy overlaps can be of three
kinds: coincidence $\times$ discrepancy, discrepancy $\times$
coincidence, and discrepancy $\times$ discrepancy.  The first two
kinds transform among themselves, having their own Lyapunov exponents,
\[
    \log\frac{\phi^2\cdot (1+\sqrt{2}\, )}{\phi^4}
    \, = \, \log(1+\sqrt{2}\,) - 2 \log(\phi)
    \qquad\text{and}\qquad
    \log\frac{1\cdot\phi^2}{\phi^4} \, = \, - 2 \log(\phi) \ts ,
\]
which are just the ones inherited from the two factors.  Discrepancies
of the third kind contribute to all three kinds, so that the larger of
the two Lyapunov exponents above applies to them.
  
Since the window of a product tile is the product of the windows of
the corresponding factor tiles,
$W_{ij} = W_i^{(1)} {\times\ts} W_j^{(2)}$, its boundary is given by
\[
  \partial W^{}_{ij} \, = \,
  \bigl(\partial W_i^{(1)}  {\times\ts} W_j^{(2)}\bigr)
  \cup \bigl( W_i^{(1)} {\times\ts}\ts \partial W_j^{(2)} \bigr) ,
\]
and consequently consists of subsets of different dimensions,
\[
  d^{}_1 \, = \, \dim \bigl( \partial W_i^{(1)}\bigr)
      + \dim \bigl( W_j^{(2)} \bigr) \quad \textrm{and} \quad
  d^{}_2 \, = \, \dim \bigl( W_i^{(1)}\bigr) +
      \dim \bigl( \partial W_j^{(2)} \bigr)  .
\]
This holds in fact for all product tile types, with the same
dimensions. Hence, despite the unique value of the $\OSD$, we have
window boundary parts with two different dimensions.  \exend
\end{example}

\begin{example}\label{ex:dpvfibo}
  Our next example emerges from direct product variations of Fibonacci
  tilings \cite{BFG21}. The ones with polygonal windows all represent
  topologically conjugate dynamical systems \cite{BGM22}, and so their
  $\OSD$ and window boundary dimensions are equal to those of the
  plain direct product of two Fibonacci tilings, namely
  $\OSD(\XX,\RR^2)=2$ and $\overline{\dim}_B(\partial W)=1$.  More
  interesting are the three types of systems with fractally bounded
  windows, which in \cite{BFG21} were called \emph{castle},
  \emph{cross} and \emph{island}. For each type, the discrepancy
  inflation is primitive, with its leading eigenvalue being the
  largest root of the polynomials
\[
  p(x) \, = \,
  \begin{cases}
     x^3-4x^2+5x-3 \ts ,                   & \text{castle} \ts , \\
     x^9-2x^8-x^7+x^6+x^5-4x^4-2x^3-1 \ts , & \text{cross} \ts ,  \\
     x^5-2x^4-x^3+2x^2+x-4 \ts ,           & \text{island} \ts . 
  \end{cases}
\]
Note that, for the second polynomial, there is a deviation from
\cite{BFG21}, although the largest roots are very close
numerically. This deviation still needs to be resolved. Numerically,
the largest roots $\lambda_{\dc}$ of the polynomials are,
approximately, $2.46557$, $2.33157$, and $2.11978$. For the $\OSD$, we
then get
\[
  \OSD(\XX,\RR^2) \, = \,
  \frac{2 \log(\phi)}{2 \log(\phi)-\log(\lambda_{\dc})}
  \, \approx \,
  \begin{cases}
    16.040 \ts , & \text{castle} \ts , \\
     8.305 \ts , & \text{cross} \ts ,  \\
     4.559 \ts , & \text{island} \ts . 
  \end{cases}
\]
  Correspondingly, the dimensions of the window boundaries become
\[
  \overline{\dim}_B (\partial W) \, = \,
  \frac{\log(\lambda_{\dc})}{\log(\phi)} \, \approx \,
  \begin{cases}
    1.8753 \ts , & \text{castle} \ts , \\
    1.7592 \ts , & \text{cross} \ts ,  \\
    1.5613 \ts , & \text{island} \ts . 
  \end{cases}
\]
Thus, the $\OSD$ distinguishes these classes topologically, as stated
in \cite{BFG21} on the basis of the window boundary dimensions.
\exend
\end{example}

\begin{example}\label{ex:hat}
  The Hat tiling \cite{Hat} has recently attracted a lot of attention
  due to its aperiodic monotile property, but it is interesting also
  for its $\OSD$. We find two different Lyapunov exponents,
\[
    \lambda_{\mathrm{L}}^{(1)} \, = \, \log \frac{2+\sqrt3}{\phi^4}
    \quad \text{and} \quad
    \lambda_{\mathrm{L}}^{(2)} \, = \, \log \frac{\phi^2}{\phi^4}
    \, = \, - 2 \log (\phi) \ts ,
\]
where $\phi$ is again the golden mean. These two Lyapunov exponents
can also be seen in the window boundary, which has straight parts and
fractal parts \cite{BGS23}. At first sight, this gives only upper and
lower bounds on the $\OSD$. However, the Hat tiling has a cousin, the
Golden Hex Tiling (GHT) of \cite{AA23}, whose construction indicates
that it is topologically conjugate to the Hat tiling.  Indeed, we find
the same two Lyapunov exponents also for the GHT. Here, the dynamical
system of the GHT has a factor, which is obtained by replacing the two
kinds of trapezoids by a parallelogram and a triangle each. This
simplified GHT has only \emph{one} Lyapunov exponent, the larger of
the above two.

Since the $\OSD$ cannot increase under a factor map, the full GHT must
have the same $\OSD$ as the simplified one, and if the Hat tiling is
conjugate to the GHT, the same holds for the Hat tiling. As a result,
we have
\[
    \OSD( \XX^{}_{\mathrm{Hat}}, \RR^2 ) \, = \,
    \frac{4\log(\phi)}{4\log(\phi) - \log (2+\sqrt{3} \,)}
    \, \approx \, 3.166{\ts\ts}443 \ts .
\]
As shown in \cite{BGS23}, the window boundaries consist of components
with dimensions
\[
  d^{}_1 \, = \, 1 \quad \textrm{or} \quad
  d^{}_2 \, = \, \frac{ \log(2+\sqrt{3} \, ) }{2\log(\phi)} \ts ,
\]
which correspond to the two Lyapunov exponents of the Hat tiling.  We
see here that, despite the two different Lyapunov exponents, we can
have a well-defined $\OSD$, whereas the window boundary contains
components of different dimensions; compare also
Example~\ref{ex:fibofibotw}.  \exend
\end{example}

\section{Concluding remarks}\label{sec:conc}

Barge and Diamond \cite{BD07} have encoded the proximality structure
of one-dimensional inflation tilings with pure-point spectrum by the
corresponding balanced pair discrepancy inflations,
$\varrho^{}_{\dc}$.  They have shown (under an additional no-cycle
condition, which is believed to be only technical) that two tiling
spaces are homeomorphic if and only if the tiling spaces generated by
their respective $\varrho^{}_{\dc}$ are homeomorphic. In
\cite[Ex.~6.11]{MR18}, Maloney and Rust have used this to distinguish
two tiling spaces which are hard to distinguish by any simpler
means. Concretely, they have computed bounds on the cohomology ranks
of the two spaces, and could thereby tell them apart. The $\OSD$
encodes (in sufficiently nice situations) only the
leading eigenvalue of $\varrho^{}_{\dc}$, and is thus much simpler to
compute. Nevertheless, it allows to distinguish these two spaces as
well, and therefore constitutes a powerful tool.

Above, we have assumed that our tilings are self-similar, with an
inflation scaling $\lambda$ that is isotropic. This restriction is not
essential, and, with minor adjustments, most of our results hold also
in the self-affine case, where the inflation scaling is a general,
strictly expanding linear map $Q$, whose expanding eigenvalues form a
Pisot family \cite{LS12}. Essentially, the volume scaling factor
$\lambda^d$ has to be replaced by $\lvert \det(Q) \rvert$, as is also
well known from the spectral analysis of such tilings \cite{BGM19}.
Some further complications might arise though, especially regarding
the window boundary dimensions.

We have also assumed that our inflations are \emph{stone inflations},
which means that the super-tiles are precisely dissected into copies
of the original tiles, where we consider only the case where the
super-tiles are scaled copies of the prototiles. Solomyak's overlap
algorithm \cite{Sol97} also assumes this. This is more restrictive
than necessary. It is enough if each tile is assigned to a unique
supertile. The limiting shape of a supertile may then become fractal,
which makes the overlap algorithm difficult to apply, because it may
be hard to determine whether two fractal shapes do overlap. However,
Akiyama and Lee \cite{AL11} have developed a generalised overlap
algorithm, which can cope with this situation, by considering almost
overlaps. With this overlap algorithm, the $\OSD$ is straight-forward
to compute.

Our approach can directly be applied only to self-similar (or to
self-affine) tilings. Clark and Sadun \cite{CS1} have investigated how
the spectral properties of such dynamical systems change under shape
changes of the tiles. They showed that shape changes which are
asymptotically negligible lead to dynamical systems that are
topologically conjugate to the original one, and hence do not change
the $\OSD$. Further, applying a global linear map to a tiling space is
a relevant shape change, but it will also not affect the $\OSD$. Other
asymptotically non-negligible shape changes, however, will generically
kill all (non-trivial) point spectrum, and hence change the $\OSD$
drastically (from a finite to an infinite value). Asymptotic
negligibility is controlled by the cohomology class in
$\check{H}^1(\XX,\RR^d)$ of a shape change \cite{CS1}.

In a sense, $\OSD$ and \v{C}ech cohomology \cite{AP98} of the
underlying tiling space are complementary complexity measures.  Both
depend on the set of singular fibres over the MEF.  The complexity of
\v{C}ech cohomology comes from the multiplicity of the singular
fibres, which is completely irrelevant for $\OSD$, as all tilings in a
fibre are identified. Rather, $\OSD$ depends on the geometry of the
set of singular fibres. The two complexity measures therefore see
different origins of complexity, and complement each other.

In the case where we have a Lyapunov spectrum with several values, we
may be able to say more than giving the rough bounds of
Theorem~\ref{thm:ac}, as we saw in Example~\ref{ex:fibofibotw}.  In
other examples like the Hat tiling \cite{Hat,BGS23}, the larger of two
Lyapunov exponents seems to be the relevant one. In this area, better
criteria are still needed.

\section*{Acknowledgements}

It is our pleasure to thank Maik Gr\"{o}ger for illuminating
discussions, and Henk Bruin, Natalie Frank and Karl Petersen for
helpful comments.

This work was supported by Deutsche Forschungsgemeinschaft (DFG,
German Research Foundation), via TRR 358/1 2023--491392403 (MB, FG)
and Project 509427705 (PG).

\clearpage


\begin{thebibliography}{99}

\bibitem{AA23}
  Akiyama S and Araki Y.
  An alternative proof for an aperiodic monotile,
  \textit{Discr.\ Comput.\ Geom.} (2025), in press;
  \texttt{arXiv:2207.12322}.

\bibitem{AGL14}
  Akiyama S, G\"ahler F and Lee J-Y.
  Determining pure discrete spectrum for some self-affine tilings,
  \textit{Discr.\ Math.\ Th.\ Comp.\ Sci.\ (DMTCS)}
  \textbf{163} (2014) 305--316;  \texttt{arXiv:1403.0362}.

\bibitem{AL11}
  Akiyama S and Lee Y-J.
  Algorithm for determining pure pointedness of self-affine tilings,
  \textit{Adv.\ Math.} \textbf{226} (2011) 2855--2883.
  
\bibitem{AP98}
  Anderson J E and Putnam I F.
  Topological invariants for substitution tilings and their
  associated $C^{*}$-algebras,
  \textit{Ergod.\ Th.\ Dynam.\ Syst.} \textbf{18} (1998) 509--537.
  
\bibitem{ABKL}
  Aujogue J-B, Barge M, Kellendonk J and Lenz D.
  Equicontinuous factors, proximality and Ellis
      semigroup for Delone sets,
  in \textit{Mathematics of Aperiodic Order},
  Kellendonk J, Lenz D and Savinien J (eds.),
  Birkh\"{a}user, Basel (2015), pp.~137--194.

\bibitem{Aus}
  Auslander J.
  \textit{Minimal Flows and their Extensions},
  North Holland, Amsterdam (1988).

\bibitem{BFG21}
  Baake M, Frank N P and Grimm U.
  Three variations on a theme by Fibonacci,
  \textit{Stoch.\ Dyn.} \textbf{21} (2021) 2140001:1--23;
  \texttt{arXiv:1910.00988}.
  
\bibitem{BGM19}
  Baake M, G\"{a}hler F and Ma\~{n}ibo N.
  Renormalisation of pair correlation measures for primitive
  inflation rules and absence of absolutely continuous diffraction,
  \textit{Commun.\ Math.\ Phys.} \textbf{370} (2019) 591--635; \\
  \texttt{arXiv:1805.09650}.
  
\bibitem{BGM22}
  Baake M, G\"{a}hler F and Maz\'{a}\v{c} J.
  Fibonacci direct product variation tilings,
  \textit{J.\ Math.\ Phys.} \textbf{63} (2022) 082702:1--13;
  \texttt{arXiv:2203.07743}.

\bibitem{BGS23}
  Baake M, G\"{a}hler F and Sadun L.
  Dynamics and topology of the Hat family of tilings,
  \textit{Israel J.\ Math.} (2025), in press; 
  \texttt{arXiv:2203.07743}.

\bibitem{TAO}
  Baake M and Grimm U.
  \textit{Aperiodic Order. Vol.\ 1: A Mathematical Invitation},
  Cambridge University Press, Cambridge (2013).

\bibitem{BG20}
  Baake M and Grimm U.
  Fourier transform of Rauzy fractals and point spectrum of 1D
  Pisot inflation tilings,
  \textit{Docum.\ Math.} \textbf{25} (2020) 2303--2337;
  \texttt{arXiv:1907.11012}.

\bibitem{Crelle}
  Baake M and Moody R V,
  Weighted Dirac combs with pure point diffraction,
  \textit{J.\ Reine Angew.\ Math.\ (Crelle)}
  \textbf{573} (2004) 61--94;
  \texttt{arXiv:math/0008063}.
  
\bibitem{BD07}
  Barge M and Diamond B.
  Proximality in Pisot tiling spaces,
  \textit{Fund.\ Math.} \textbf{194} (2007) 191--238; \\
  \texttt{arXiv:math/0509051}.
  
\bibitem{BK13}
  Barge M and Kellendonk J.
  Proximality and pure point spectrum for tiling dynamical systems,
  \textit{Michigan Math.\ J.} \textbf{62} (2013) 793--822;
  \texttt{arXiv:1108.4065}.

\bibitem{Henk}
  Bruin H.
  \textit{Topological and Ergodic Theory of Symbolic Dynamics},
  American Mathematical Society, Providence, RI (2022).

\bibitem{CS1}
  Clark A and Sadun L.
  When shape matters,
  \textit{Ergod.\ Th.\ Dynam.\ Syst.} \textbf{26} (2006) 69--86;
  \newline
  \texttt{arXiv:math.DS/0306214}.

\bibitem{DE05}
  Das M and Edgar G A.
  Separation properties for graph-directed self-similar fractals,
  \textit{Top.\ Appl.} \textbf{152} (2005) 138--156.

\bibitem{EG99}
  Edgar G A and Golds J.
  A fractal dimension estimate for a graph-directed iterated function
  system of non-similarities,
  \textit{Indiana Univ.\ Math.\ J.} \textbf{48} (1999) 429--447.

\bibitem{FFIW06}
  Feng D-J, Furukado M, Ito S and Wu J.
  Pisot substitutions and the Hausdorff dimension of boundaries of
  atomic surfaces,
  \textit{Tsukuba J.\ Math.} \textbf{30} (2006) 195--223.
  
\bibitem{FG20}
  Fuhrmann G and Gr\"{o}ger M.
  Constant length substitutions, iterated function systems
  and amorphic complexity,
  \textit{Math.\ Z.} \textbf{295} (2020) 1385--1404;
  \texttt{arXiv:1812.10789}.

\bibitem{FGJ16}
  Fuhrmann G, Gr\"{o}ger M and J\"{a}ger T.
  Amorphic Complexity,
  \textit{Nonlinearity} \textbf{29} (2016) 528--565; \\
  \texttt{arXiv:1503.01036}.
  
\bibitem{FGJK23}
  Fuhrmann G, Gr\"{o}ger M, J\"{a}ger T and Kwietniak D.
  Amorphic complexity of group actions with applications to quasicrystals,
  \textit{Trans.\ Amer.\ Math.\ Soc.} \textbf{376} (2023) 2395--2418;
  \texttt{arXiv:2101.05034}.

\bibitem{FGL21}
  Fuhrmann G, Gr\"{o}ger M and Lenz D.
  The structure of mean equicontinuous group actions,
  \textit{Israel J.\ Math.} \textbf{247} (2022) 75--123;
  \texttt{arXiv:1812.10219}.

\bibitem{Kel95}
  Kellendonk J.
  Noncommutative geometry of tilings and gap labelling,
  \textit{Rev.\ Math.\ Phys.} \textbf{7} (1995) 1133--1180;
  \texttt{arXiv:cond-mat/9403065}.
  
\bibitem{Ken94}
  Kenyon R.
  Inflationary tilings with a similarity structure,
  \textit{Comment.\ Math.\ Helv.} \textbf{69} (1994) 169--198.

\bibitem{LS08}
  Lee J-Y.
  Pure point diffractive substitution Delone sets have the Meyer property,
  \textit{Discr.\ Comput.\ Geom.} \textbf{39} (2008) 219--338.

\bibitem{LMS02}
  Lee J-Y, Moody R V and Solomyak B.
  Pure point dynamical and diffraction spectra,
  \textit{Ann.\ H.\ Poincar\'e} \textbf{3} (2002) 1003--1018;
  \texttt{arXiv:0910.4809}.

\bibitem{LS12}
  Lee J-Y and Solomyak B.
  Pisot family self-affine tilings, discrete spectrum,
       and the Meyer property,
  \textit{Discr.\ Cont.\ Dynam.\ Syst.\ A} \textbf{32} (2012) 935--959; 
  \texttt{arXiv:1002.0039}.

\bibitem{GLJJ}
  Luck J M, Godr\`{e}che C, Janner A and Janssen T.
  The nature of the atomic surfaces of quasiperiodic
       self-similar structures,
  \textit{J.\ Phys.\ A:\ Math.\ Gen.} \textbf{26} (1993) 1951--1999.

\bibitem{MR18}
  Maloney G R and Rust D.
  Beyond primitivity for one-dimensional substitution subshifts and
  tiling spaces,
  \textit{Ergod.\ Th.\ Dynam.\ Syst.} \textbf{38} (2018) 1086--1117;
  \texttt{arXiv:1604.01246}.

\bibitem{Karl}
  Petersen K.
  \textit{Measuring Complexity in Cantor Dynamics},
  unpublished lectures notes (CIMPA Research School, 2015);
  \texttt{arXiv:1607.02425}.

\bibitem{PF}
  Pytheas Fogg N.
  \textit{Substitutions in Dynamics, Arithmetics and Combinatorics},
  Berth\'e V, Ferenczi S, Mauduit C and Siegel A (eds.),
  \textit{LNM 1794}, Springer, Berlin (2002).

\bibitem{Hat}
  Smith D, Myers J S, Kaplan C S and Goodman-Strauss C.
  An aperiodic monotile,
  \textit{Combin.\ Th.} \textbf{4} (2024) 6:1--90; 
  \texttt{arXiv:2303.10798}.

\bibitem{Sin07}
  Sing B.    
  \textit{Pisot Substitutions and Beyond},
  PhD thesis, Bielefeld University (2007);
  available electronically at
  \texttt{urn:nbn:de:hbz:361-11555}.

\bibitem{SS02}
  Sirvent V F and Solomyak B.
  Pure discrete spectrum for one-dimensional substitution
  systems of Pisot type,
  \textit{Can.\ Math.\ Bull.} \textbf{45} (2002) 697--710.

\bibitem{Sol97}
  Solomyak B.
  Dynamics of self-similar tilings,
  \textit{Ergod.\ Th.\ Dynam.\ Syst.} \textbf{17} (1997) 695--738 and
  \textit{Ergod.\ Th.\ Dynam.\ Syst.} \textbf{19} (1999) 1685 (erratum).

\bibitem{Sol98}
  Solomyak B.
  Nonperiodicity implies unique composition for self-similar translationally
  finite tilings,
  \textit{Discr.\ Comput.\ Geom.} \textbf{20} (1998) 265--279.

\bibitem{Sol07}
  Solomyak B.
  Eigenfunctions for substitution tiling systems,
  \textit{Adv.\ Stud.\ Pure Math.} \textbf{49} (2007) 433--454.

\bibitem{Nicu}
  Strungaru N,
  Almost periodic pure point measures, in
  \textit{Aperiodic Order.\ Vol.~2:\ Crystallography and
    Almost Periodicity}, Baake M and Grimm U (eds.),
  Cambridge University Press, Cambridge (2017), pp.~271--342;
  \texttt{arXiv:1501.00945}.
  
\end{thebibliography}
\end{document}